\newcommand{\x}{\mathbf{x}}
\newcommand{\e}{\mathbf{e}}
\newcommand{\q}{\mathbf{q}}
\newcommand{\ar}{\mathbf{a}}
\newcommand{\s}{\mathbf{s}}
\newcommand{\su}{_{ij}(t)}
\newcommand{\cali}{\mathcal{I}}
\newcommand{\calk}{\mathcal{K}}
\newcommand{\pr}{\mathbb{P}}
\newcommand{\ex}{\mathbb{E}}
\newcommand{\cblue}{\color{black}}
\newcommand{\cblack}{\color{black}}
\newcommand{\flip}{random $d$-flip }
\newcommand{\htlike}{heavy traffic behaviour is MaxWeight-like}
\newcommand{\norm}[1]{\left\lVert#1\right\rVert}
\newtheorem{theorem}{Theorem}
\newtheorem{lemma}[theorem]{Lemma}
\newtheorem{definition}[theorem]{Definition}
\newtheorem{asu}{Assumption}
\newtheorem{proposition}[theorem]{Proposition}
\DeclareMathOperator*{\argmax}{arg\,max}
\author{ Prakirt Raj Jhunjhunwala, Siva Theja Maguluri\\ prakirt@gatech.edu, siva.theja@gatech.edu \\ Georgia Institute of Technology}
\begin{document}
\title{Low-Complexity Switch Scheduling Algorithms: Delay Optimality in Heavy Traffic}

\maketitle
\thispagestyle{empty}
\pagestyle{empty}

\begin{abstract}
Motivated by applications in data center networks, in this paper, we study the problem of scheduling in an input queued switch. While throughput maximizing algorithms in a switch are well-understood, delay analysis was developed only recently. It was recently shown that the well-known MaxWeight algorithm achieves optimal scaling of mean queue lengths in steady state in the heavy-traffic regime, and is within a factor less than $2$ of a universal lower bound. However, MaxWeight is not used in practice because of its high time complexity. In this paper, we study several low complexity algorithms and show that their heavy-traffic performance is identical to that of MaxWeight. We first present a negative result that picking a random schedule does not have optimal heavy-traffic scaling of queue lengths even under uniform traffic. We then show that if one picks the best among two matchings or modifies a random matching even a little, using the so-called flip operation, it leads to MaxWeight like heavy-traffic performance under uniform traffic.  We then focus on the case of non-uniform traffic and show that a large class of low time complexity algorithms have the same heavy-traffic performance as MaxWeight, as long as it is ensured that a MaxWeight matching is picked often enough. We also briefly discuss the performance of these algorithms in the large scale heavy-traffic regime when the size of the switch increases simultaneously with the load. \cblue Finally, we perform empirical study on a new algorithm to compare its performance with some existing algorithms. \color{black} 

\end{abstract}

\section{INTRODUCTION}

Input queued crossbar switches are essential components in building networks and have been studied since the 90's \cite{mckeown1999achieving}. There is now renewed interest in studying input queued switches because they are good approximations of data center networks built using Clos topologies \cite{alizadeh2013pfabric}\cite{perry2014fastpass}. 

The throughput performance of various algorithms was studied in the past. It was shown in \cite{mckeown1999achieving}\cite{182479}  that the celebrated MaxWeight algorithm maximizes throughput. However, implementing a MaxWeight algorithm involves computing a maximum weight bipartite matching at every time, which has a complexity of $O(n^{2.5})$ \cite{duan2012scaling}, which is impractical given the size of today's data center networks. Therefore, lower complexity algorithms that also maximize throughput were studied in \cite{665071}\cite{1019350}\cite{giaccone2003randomized}\cite{shah2002efficient}\cite{mckeown1999islip}. A low complexity algorithm with distributed implementation is presented in \cite{gong2019qps}. 

\cblue While maximizing throughput is a first order metric and easy to study, the objective in a real world data center is to minimize delay. \color{black} 
Due to Little's law, studying steady-state delay is the same as studying steady-state mean queue length. However, evaluating either of these is challenging in queueing systems. Therefore, they are studied in various asymptotic regimes such as heavy-traffic. The primary focus of this paper is the heavy-traffic regime, where the switch is loaded close to its capacity. In this regime, the mean queue length goes to infinity, and we study the rate at which it goes to infinity by considering the sum of the queue lengths in heavy-traffic, multiplied by a heavy-traffic parameter ($\epsilon$) that captures the distance to the capacity region.

Heavy-traffic queue length behavior under MaxWeight was recently studied in \cite{doi:10.1287/15-SSY193}\cite{maguluri2016optimal}\cite{hurtadolange2019heavytraffic}  and an exact expression for the heavy-traffic scaled mean sum queue lengths was obtained. Moreover, it was shown that the queue lengths are within a factor of 2 from a universal lower bound, thus establishing that MaxWeight has an optimal scaling. Moreover, using Little's law, this result implies that the mean delay is $O(1)$ independent of the size of the switch. This result was obtained in \cite{doi:10.1287/15-SSY193} using a novel drift method. The key step is to establish a state space collapse (SSC) result, which shows that in heavy traffic, the $n^2$ dimensional queue length vector lives close to a $(2n-1)$ dimensional cone. The main challenge here was due to the multidimensional nature of the state space collapse. The goal of this paper is to study low complexity scheduling algorithms that have MaxWeight like queue length performance on heavy-traffic, i.e., within a constant factor of the universal lower bound. 

\begin{table*}[ht]
\label{tab:summary}
\centering
\caption{Results presented in this paper}
\begin{tabular}{|p{3.6cm}|p{1.7cm}|p{1.4cm}|p{2cm}|p{2.7cm}|p{2.9cm}|}
\hline
Algorithm                  & Throughput Optimality & $\displaystyle \lim_{\epsilon \downarrow 0}\epsilon \sum_{ij} \bar q_{ij}$ & $\ex[\sum_{ij}\bar q_{ij}] = O(n^{1+\beta})$ for  $\beta$ & Amortized Complexity     & \multicolumn{1}{l|}{Reference} \\ \hline
MaxWeight                  & Yes                   & $O(n)$                                                        & $>4$                                                                                                                                  & $O(n^{2.5})$                 & \cite{doi:10.1287/15-SSY193}        \\ \hline
Random                     & Uniform traffic       & $O(n^2)$                                        & N/A                                                                                                                                                & $O(n)$                   & Sec \ref{sec: random}         \\ \hline
Power-of-$d$               & Uniform traffic       & $O(n)$                                           & $>6$                                                                                                                                  & $O(dn)$                  & Sec \ref{sec: po2}, Sec \ref{sec: large_scale}        \\ \hline
Random $d$-Flip              & Uniform traffic       & $O(n)$                                           & $>6$                                                                                                                                  & $O(n+d)$                 & Sec \ref{sec: flip}, Sec \ref{sec: large_scale}         \\ \hline
Bursty MaxWeight           & Yes                   & $O(n)$                                                        & $>3+\max(\gamma,1)$                                                                                                                            & $O(n^{2.5}/m)$               & Sec \ref{sec: bursty}, Sec \ref{sec: large_scale}, \cite{1019350}      \\ \hline
Pipelined MaxWeight       & Yes                   & $O(n)$                                                        & $>3+\max(\gamma,1)$                                                                                                                            & $O(n^{2.5})$, parallelizable & Sec \ref{sec: pipelined}, Sec \ref{sec: large_scale}, \cite{1019350}        \\ \hline
Randomly Delayed MaxWeight & Yes                   & $O(n)$                                                        & Unknown                                                                                                                                                & $O(\delta n^{2.5})$        & Sec \ref{sec: randomly_delayed_maxweight}                          \\ \hline
Pick and Compare (PC-$d$)  & Yes                   & $O(n)$                                                        & Unknown                                                                                                                                                & $O(dn)$                  & Sec \ref{sec: randomly_delayed_maxweight}                                \\ \hline
LAURA                      & Yes                   & $O(n)$                                                        & Unknown                                                                                                                                                & $O(n\log^2n)$            &Sec \ref{sec: randomly_delayed_maxweight}.  \cite{giaccone2003randomized}                              \\ \hline
SERENA                     & Yes                   & $O(n)$                                                        & Unknown                                                                                                                                               & $O(n)$                   & Sec \ref{sec: randomly_delayed_maxweight}, \cite{giaccone2003randomized}        \\ \hline
$d$-Flip                     & Unknown                   & Unknown                                                        & Unknown                                                                                                                                               & $O(d)$                   & Sec \ref{sec: simulations}       \\ \hline
\end{tabular}
\end{table*}

\subsection{Main Contributions}
We first consider the switch under uniform traffic and study random scheduling, where a matching is picked every time uniformly at random. We show 
in Section \ref{sec: random}
that, under uniform traffic, while random scheduling achieves the maximum possible throughput, its heavy-traffic behavior is much worse. In particular, we show that for random scheduling, the heavy traffic scaled mean sum queue length is $\Theta(n^2)$, as opposed to $\Theta(n)$ for MaxWeight. This is because random scheduling does not exhibit state space collapse. 

Then, in Section \ref{sec: po2}, we
study the power-of-d scheduling, where $d$ matchings are picked uniformly at random and the best among them is used. We show that under uniform traffic, power-of-$d$ scheduling not only maximizes throughput, but also has MaxWeight like heavy-traffic behavior. Inspired from \cite{balinski1964primal}, we further propose an algorithm that we call \flip, where one matching is sampled at random, and one tries to improve it by trying to flip two queues in the matching. We show that under uniform traffic, this is enough to get maximum throughput and MaxWeight like heavy-traffic behavior. 

We then consider variants of the MaxWeight algorithm under general traffic
in Section \ref{sec:approx_maxweight}.
We show that the bursty MaxWeight algorithm and the pipelined MaxWeight algorithm \cite{1019350} have the same heavy-traffic performance as MaxWeight. In bursty MaxWeight, a maximum weight matching is computed every $m$ time steps, and the same matching is used for the $m$ steps. In pipelined MaxWeight, a maximum weight matching is computed at every time, but it takes $m$ time steps to complete this computation, and so the matching is used only $m$ steps later. This is amenable to a parallelized implementation. We present a general theorem that characterizes the heavy-traffic performance of a broad class of algorithms that includes both these algorithms. 

We then consider another general class of linear complexity algorithms proposed by Tassiulas \cite{665071} that are shown to be throughput optimal. In these algorithms, at any time, there is a small $\delta$ chance of picking a MaxWeight matching. If not, a matching is sampled according to some distribution, and it is compared with the previous matching, and the best among the two is used. This framework was used in \cite{giaccone2003randomized} and \cite{shah2002efficient} to develop several low complexity algorithms including APSARA, SERENA and LAURA. We show 
in Section \ref{sec: randomly_delayed_maxweight}
that this large class of algorithms also has the same heavy-traffic behavior as MaxWeight.

While all the algorithms that we study have the same heavy-traffic performance as that of the MaxWeight, they all are not equally good in practice. This is because while heavy-traffic analysis is finer than throughput optimality, it does not capture subtle differences in performance. In particular, any algorithm that exhibits SSC has MaxWeight like heavy-traffic performance. However, different algorithms may have slightly different quality of SSC. In order to capture this performance difference, we consider the large system heavy-traffic regime \cite{shah2011optimal}\cite{shah2014optimal}\cite{shah2016queue}\cite{xu2019improved}
in Section \ref{sec: large_scale}. 
In this regime, the size of the switch increases simultaneously while the traffic approaches the capacity, and we study the performance difference of the above algorithms in this regime.

All the results are summarized in Table \ref{tab:summary}. 
\cblue
In Section \ref{sec: simulations}, we use simulations to exhibit the performance of the proposed algorithm $d$-Flip. \color{black} We finally conclude in Section \ref{sec: conclusion}, along with a few pointers on future research directions. 
We will now start with the model, notation and other preliminaries such as a formal definition of state-space collapse and heavy traffic optimality in Section \ref{sec: model}.

\section{MODEL AND PRELIMINARIES}
\label{sec: model}
In this section, we present the model and introduce the required notation. \cblue Moreover, we present several known results from the previous literature. \color{black}
In any time slot $t$,  $q_{ij}(t)$ (also called queue length) denotes  the number of packets that needs to be transferred from the input $i$  to the output $j$,  $\mathbf{q}(t)$ is a $n\times n$ queue length matrix with elements  $q_{ij}(t)$. Throughout this paper, the letters in bold denotes vectors in $\mathbb R^{n \times n}$.
Also, for any process $x(t)$ that converges in distribution, $\bar x$ denotes the limiting random variable to which $x(t)$ converges.
\subsection{Arrival and Service Process}
\label{section: Arrival prcess}
At any time $t$, $a_{ij}(t)$ ($\mathbf{a}(t)$ in matrix form) denotes the number of packets that arrive at the input port $i$ to be delivered to output port $j$. \cblue The term matrix and vector are used interchangeably throughout the paper. \color{black} The mean arrival rate vector is denoted by $\mathbb{E}[\mathbf{a}(t) ] = \mathbf{\boldsymbol \lambda}$ and variance Var$(\mathbf{a}(t)) = \mathbf{\boldsymbol\sigma}^2$.
\begin{asu}
\label{assu: arrival_process}
For the arrival process:
\begin{itemize}
    \item[(i)] For any given pair $(i,j)$, $a_{ij}(t)$ are independent and identically distributed with respect to $t$.
    \item[(ii)] The arrival process is also independent across input-output pair, i.e., for all $i,j,i'$ and $j'$ such that $(i,j)\neq (i',j')$, $a_{ij}(t)$ is independent of $a_{i'j'}(t)$.
    \item[(iii)] There exists $ a_{\max} $ such that $ \forall i,j,t$, $a_{ij}(t) \leq a_{\max} < \infty$.
    \item[(iv)] There is non-zero probability of no arrivals, i.e., $\pr(\ar(t) = \mathbf{0}) > 0$, where $\mathbf 0$ is a $n\times n$ vector of all zeros.
\end{itemize}
\end{asu}
The assumptions mentioned in Assumption \ref{assu: arrival_process} are quite general for a switch system. 
Due to the structure of the switch system, in each time slot, each input can be matched with at most one output and vice-versa. The switch system can also be thought of as a complete bipartite graph with $2n$ nodes and $n^2$ edges. And the weight of each edge $(i,j)$ is $q_{ij}(t)$. A schedule is then a matching on the corresponding graph, which is represented by a $n\times n$  matrix with entries either $0$ or $1$. We use $\s(t)$ to denote the schedule in time slot $t$. The element $s_{ij}(t) = 1$ if and only if the input $i$ is connected with the output $j$ at time $t$. In this paper, without loss of generality, we consider a schedule to be a \cblue perfect matching between input and output nodes,  i.e., no more connections between input and output nodes can be made. \color{black} It follows that the set of possible schedules  $\mathcal{X}$ is just the set of all $n\times n$ permutation matrices.

The weight of the schedule is the sum of the queue lengths that are being served in the given time slot. A scheduling algorithm or policy picks the schedule $\mathbf{s}(t)$ in every time slot. \textit{MaxWeight} is a scheduling algorithm that always picks the schedule with the highest weight.
  If the algorithm picks schedules only from $\mathcal{X}$, it might happen that $s_{ij}(t)$ is $1$ but there are no packets available to be transferred from input $i$ to output $j$. In such a case, we say that the service is wasted. As a result, the queue length evolve according to the following equation,
\begin{align*}
q_{ij}(t+1) &= [q_{ij}(t) + a_{ij}(t) - s_{ij}(t)]^+\\
&= q_{ij}(t) + a_{ij}(t) - s_{ij}(t) + u_{ij}(t),
\end{align*}
where $[x^+] = \text{max}(0,x)$ and $u\su$ denotes the unused service on link $(i,j)$. By writing this into matrix form, we get
\begin{align*}
\mathbf{q}(t+1) &= \mathbf{q}(t) +\mathbf{a}(t) - \mathbf{s}(t) + \mathbf{u}(t)
\end{align*}
It can be observed that if $q_{ij}(t+1) >0$ then $u\su = 0$. This gives us the condition that, $q_{ij}(t+1) u\su = 0$
for all $(i,j)$ which implies that $\langle \q(t+1) ,\mathbf u (t) \rangle =0$.
\cblue
Let $\sigma(\mathcal H_t)$ be the $\sigma$-algebra generated by  $\mathcal H_t$, where $\mathcal H_t$ denotes the history till time $t$, i.e.,
\begin{equation}
\label{eq: filtration_history}
	\mathcal{H}_t = \{ \q(0),\s(0),\q(1),\dots, \s(t-1),\q(t) \}.
\end{equation}
Similarly, we define $\sigma(\tilde{\mathcal H_t}) $ to be the $\sigma$-algebra generated by  $ \tilde{\mathcal H_t}$, where 
\begin{equation}
\label{eq: filtration_history_2}
	 \tilde{\mathcal H_t} = \{ \q(0),\s(0),\q(1),\dots, \s(t-1),\q(t),\s(t) \}.
\end{equation}
\color{black}
\cblue
For an arbitrary scheduling algorithm, it not necessary that $\mathbf{q}(t)$ forms a Markov chain. For example, in Section \ref{sec: randomly_delayed_maxweight}, we look at the algorithm named as randomly delayed MaxWeight, where the system uses the MaxWeight schedule with probability $\delta$, and with probability $(1-\delta)$, it uses the schedule used in previous time slot. In such a case, the system need to remember the schedule used in previous time slot and so using $\mathbf{q}(t)$ as the state of Markov chain is not enough. The correct definition for the state of the Markov chain in this case would be $(\mathbf{q}(t),\mathbf{s}(t))$. 
\color{black}
For the switch system considered in this paper, we assume that there is process $X(t)$ such that $X(t)$ forms a Markov chain and we define two conditions on $X(t)$ as given below.
\cblue
	\begin{itemize}
		\item[A.1.] The Markov chain $X(t)$ is $\sigma(\tilde{\mathcal{H}_t})$-measurable and it is also irreducible and aperiodic. 
		\item[A.2.] There exists a function $g(\cdot)$ such that $\q(t) = g(X(t))$. Further, let $\mathcal{A} \subset \mathbb Z^{n\times n}$ and suppose  $g^{-1}(\mathcal A) = \{ X : g(X) \in \mathcal{A}\}$. Then, if $|\mathcal{A}| < \infty$ then $|g^{-1}(\mathcal{A})| < \infty$, where $|\cdot|$ denotes the size of the set. 
	\end{itemize}

The condition A.1 is required to use the Lyapunov's drift argument to establish the positive recurrence of the Markov chain $X(t)$. Note that irreducibility is not a major condition as otherwise, we can just consider the communicating class of $X(0)$ to be the state space. The condition A.2 essentially says that $\q(t)$ is a deterministic function of the state $X(t)$, which implies that the state of the Markov chain holds full information about the queue length, which is necessary for the technical analysis we are doing in this paper.  If such a function exists then the state $X(t)$ contains the information about $\q(t)$ within itself, which is necessary to define the Lyapunov functions considered in this paper. 
\color{black}

In this paper, we say that the switch system is \textit{stable} if the corresponding Markov chain $X(t)$ is positive recurrent. The capacity region $\mathcal{C}$ of the switch is the set of mean arrival rate vector $\mathbf{\boldsymbol \lambda}$ for which there exists some scheduling policy under which the switch system is stable. As given in \cite{182479}, the capacity region for a switch, denoted by $\mathcal{C}$ is
\cblue
\begin{equation*}
	\mathcal C = \Big \{ \mathbf{\boldsymbol \lambda} \in \mathbb{R}_+^{n \times n } : \sum_{i=1}^n \lambda_{ij} < 1,\sum_{j=1}^n \lambda_{ij} < 1 \ \forall i,j \Big \}.
\end{equation*}
\color{black}
 An algorithm for which the the queue length vector $\q(t)$ is stable for all $\mathbf{\boldsymbol \lambda} \in \mathcal{C}$ is called throughput optimal. In \cite{stolyar2004maxweight}, it was proved that MaxWeight is throughput optimal. 
 
 The set $\mathcal{F}$ denotes the set of doubly stochastic matrices. 
The set $\mathcal F$ forms a \cblue facet \cite[Chapter 3]{ziegler2012lectures} \cblack of the closure of the capacity region $\mathcal C$. Throughout this paper, we use $\mathbf{\boldsymbol \lambda}$ to denote a matrix in $\mathcal{C}$ and $\mathbf{\boldsymbol \nu}$ to denote a matrix in $\mathcal{F}$.

A switch system is in heavy traffic regime if the mean arrival rate matrix is very close to the boundary of the capacity region. Note that for any $\boldsymbol \lambda \in \mathcal{C}$, there exists $\boldsymbol \nu \in \mathcal{F}$ and $\epsilon_{ij} \in [0,1]$ such that $\lambda_{ij} = (1-\epsilon_{ij})\nu_{ij}$. In order to make the theoretical analysis simpler, we take $\epsilon_{ij} = \epsilon$ for all $(i,j)$. Otherwise we can pick an $\epsilon$ such that $\epsilon_{ij} \geq \epsilon \ \forall (i,j)$ and many of our upper bound results would still be valid. This is also called \textit{Completely Saturated Case} in \cite{doi:10.1287/15-SSY193}.
\begin{asu}
\label{assu: lambda}
The mean arrival rate vector is $\mathbf{\boldsymbol \lambda} = (1-\epsilon)\mathbf{\boldsymbol \nu}$, for some $\boldsymbol \nu \in \mathcal{F}$ and $\epsilon \in (0,1)$, such that
\begin{equation*}
   \nu_{\min} \triangleq \min_{ij} \nu_{ij}  >0. 
\end{equation*}
Also, \cblue there exists $ \tilde{\boldsymbol \sigma}^2$ such that \color{black} the variance $\mathbf{\boldsymbol \sigma}^2  \rightarrow \mathbf{\tilde{\boldsymbol \sigma}}^2$ as $\epsilon \downarrow 0$.
\end{asu}
The parameter $\epsilon$ in Assumption \ref{assu: lambda} is a measure of how far $\boldsymbol \lambda \in \mathcal{C}$ is from the boundary $\mathcal{F}$. In this paper, we refer $\epsilon$ as the heavy traffic parameter.
The switch system is in heavy traffic regime if $\epsilon$ is very close to $0$. 

From here onwards, we will assume that the arrival satisfies Assumption \ref{assu: arrival_process} and \ref{assu: lambda}. \cblue Throughout the paper, $\epsilon$ denotes the distance of $\boldsymbol \lambda$ from its corresponding $\boldsymbol \nu$ as given in Assumption \ref{assu: lambda}. Also, note that even though the parameters of the arrival process depends on $\epsilon$, we do not attach $\epsilon$ to their symbols just to keep the notations simple.
\color{black}


 An arrival process is said to be under \textit{uniform traffic} if the mean arrival rate for every input-output pair is same, i.e. $\lambda_{ij} = \lambda_{i'j'}$ for all $i,j,i'$ and $j'$. \cblue Also, even though the mean arrival rates are same, the variance might differ across the input-output node pairs. \color{black} It is easy to observe that for an arrival process that is in the capacity region and under uniform traffic, the mean arrival rate lies in $\mathcal{C}^*  \subset \mathcal{C}$ given by
 \cblue
 \begin{align*}
    \mathcal{C}^* &= \Big \{ \mathbf{\boldsymbol \lambda} \in \mathbb{R}_+^{n \times n } :  \lambda_{ij} < \frac{1}{n}, \ \forall i,j \Big \}.
\end{align*}
\color{black}

  Let  $\mathbf{1}$ be an $n \times n$ matrix of all ones. If the uniform traffic arrival process satisfies Assumption \ref{assu: lambda}, then we can take $\boldsymbol \lambda = \frac{1-\epsilon}{n}\mathbf{1}$. Furthermore, if the arrival process is \textit{uniform Bernoulli traffic}, i.e., the arrivals $a_{ij}(t)$ are Bernoulli random variables, then $\norm{\mathbf{\boldsymbol \sigma}}^2 = (1-\epsilon)(n-1 + \epsilon )$, which gives $\lim_{\epsilon \downarrow 0} \norm{\mathbf{\boldsymbol \sigma}}^2 = \norm{\mathbf{\tilde{\boldsymbol  \sigma}}}^2 = n-1$.

\subsection{Geometry}
Let $\mathbf{e}^{i}$ be an $n\times n$ matrix with $i^{th}$ row being all ones and zeros everywhere else and $\mathbf{\Tilde{e}}^j$ is a $n\times n$ matrix with  $j^{th}$ column begin all ones and zeros everywhere else. Consider the subspace $\mathcal{S} \subset \mathbb{R}^{n \times n}$ defined as,
\begin{equation*}
    \mathcal{S} = \Big \{ \mathbf{x}: \mathbf{x} = \sum_i w_i \mathbf{e}^i + \sum_j \Tilde{w}_j \mathbf{\Tilde{e}}^j \text{ s.t. } w_i , \Tilde{w}_j \in \mathbb{R} \ \forall i,j \Big \}.
\end{equation*}
We define the cone $\mathcal{K}$ to be the intersection of $\mathcal{S}$ with the positive orthant, i.e., $\mathcal{K} = \mathcal{S}\cap \mathbb{R}^{n\times n}_{+}$. The dimension of cone $\mathcal{K}$ is $2n-1$ as it is spanned by $2n-1$ independent vectors out of 2n vectors $\{\mathbf{e}^{i} \}$ and $\{\mathbf{\Tilde{e}}^j\}$. For two matrices $\mathbf{x}$ and $\mathbf{y}$ in $\mathbb{R}^{n\times n}$, $\langle \mathbf{x}, \mathbf{y} \rangle$ denotes the Frobenius inner product and $\norm{\mathbf{x}} = \sqrt{\langle \mathbf{x}, \mathbf{x} \rangle}$.

\cblue
For any vector $\mathbf x$, $\mathbf x_{\|}$ denotes the projection to the space $\mathcal{S}$ with $\mathbf{x}_{\perp} = \mathbf x - \mathbf x_{\|}$. Similarly, $\mathbf x_{\| \mathcal{K}}$ denotes the projection to the cone $\mathcal{K}$ with $\mathbf{x}_{\perp \mathcal{K}} = \mathbf x - \mathbf x_{\| \calk}$.
Some important properties regarding the set $\mathcal S$ and $\mathcal K$ are provided in Appendix A\begin{Ton-submit} (in the supplement file)\end{Ton-submit}.
\color{black}

\subsection{State-space collapse}
The main workhorse in heavy-traffic analysis is state-space collapse, viz., the phenomenon that a queueing system in heavy-traffic behaves like a system with a smaller number of queues.  
It was shown in \cite{doi:10.1287/15-SSY193} that in the switch system operating under MaxWeight scheduling algorithm, the state $\q(t)$ (of dimension $n^2$) collapses to the cone $\mathcal{K}$ (of dimension $2n-1$). This was established by showing that in steady state, $\q_{\perp \mathcal{K}}$ is significantly smaller than $\q_{\| \mathcal{K}}$. The following definition presents this notion of state space collapse more formally. 
\begin{definition}
\label{def: ssc}
	A scheduling algorithm is said to achieve \textit{State-Space Collapse} (SSC) if the switch system is stable, the corresponding Markov chain $X(t)$ satisfies condition A.1 and A.2 and there exists $\epsilon_0 >0$ such that for $0 < \epsilon \leq \epsilon_0$, the steady state queue length vector satisfies 
\begin{equation}
    \label{eq: qperp_bound}
    \mathbb{E} \Big [\norm{\Bar{\q}_{\perp \mathcal{K}}}^r \Big] \leq C_r \quad \forall r \in \{1,2,\dots\},
\end{equation}
where $C_r$ is a constant, independent of $\epsilon$.
\end{definition}

\begin{theorem}
\label{prop: ssc}
Consider a switch system which achieves state-space collapse according to Definition \ref{def: ssc}, then  the heavy traffic scaled queue length satisfies
\begin{equation}
\label{eq: heavy_traffic_sum}
     \lim_{\epsilon \downarrow 0} \epsilon \mathbb{E}  \Big [ \sum_{ij} \Bar{q}_{ij} \Big] = \Big (1- \frac{1}{2n} \Big)\norm{\mathbf{\tilde{\boldsymbol  \sigma}}}^2.
\end{equation}
 \end{theorem}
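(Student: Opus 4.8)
The plan is to run the drift argument at the heart of the "drift method": choose the Lyapunov function $V(\q) = \norm{\q_\|}^2$, the squared norm of the projection of $\q$ onto the subspace $\mathcal{S}$, and set its drift to zero in steady state. The reason to project onto $\mathcal{S}$ rather than work with $\norm{\q}^2$ directly is essential: every schedule $\s(t)$ is a permutation matrix, so its row sums and column sums are identically $1$, whence $\s_\|(t) = \frac1n\mathbf{1}$ for \emph{every} permutation, regardless of which schedule the algorithm picks. This is precisely what lets one computation cover all algorithms achieving SSC — the otherwise policy-dependent quantity $\langle \q,\s\rangle$ is replaced by a policy-independent one. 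Starting from $\q(t+1) = \q(t) + \ar(t) - \s(t) + \mathbf u(t)$, projecting onto $\mathcal{S}$, expanding $\norm{\q_\|(t+1)}^2$, and using that the drift vanishes in steady state gives the identity
\[ \tfrac{2\epsilon}{n}\,\ex\Big[\textstyle\sum_{ij}\bar q_{ij}\Big] = \ex\big[\norm{(\ar-\s+\mathbf u)_\|}^2\big] + 2\,\ex\big[\langle \bar\q_\|, \mathbf u_\|\rangle\big]. \]

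I would next evaluate the two surviving pieces. For the cross term feeding the left-hand side: since $\ar(t)$ is independent of the history with $\ex[\ar(t)]=\boldsymbol\lambda=(1-\epsilon)\boldsymbol\nu$ for doubly stochastic $\boldsymbol\nu$, and $\s_\|(t)=\frac1n\mathbf{1}$ deterministically, the projected mean drift is $\ex[(\ar-\s)_\|] = -\frac{\epsilon}{n}\mathbf{1}$; combined with $\langle \q_\|,\mathbf{1}\rangle = \langle\q,\mathbf{1}\rangle = \sum_{ij}q_{ij}$, this is the origin of the factor $\frac{\epsilon}{n}\sum_{ij}\bar q_{ij}$. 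For the variance term I would use the two-way ANOVA form of the projection, $\norm{\mathbf x_\|}^2 = \frac1n\sum_i R_i^2 + \frac1n\sum_j C_j^2 - \frac1{n^2}G^2$ with $R_i,C_j,G$ the row sums, column sums, and grand sum of $\mathbf x$. Because the permutation matrix contributes only deterministic values ($1$ and $n$) to these sums, it enters solely through $O(\epsilon^2)$ mean-shift terms that vanish, while the independent arrival variances sum to $\norm{\boldsymbol\sigma}^2 \to \norm{\tilde{\boldsymbol\sigma}}^2$ both along rows and along columns. This yields $\lim_{\epsilon\downarrow 0}\ex[\norm{(\ar-\s)_\|}^2] = \big(\tfrac2n-\tfrac1{n^2}\big)\norm{\tilde{\boldsymbol\sigma}}^2$, so multiplying the identity by $\frac n2$ produces exactly $\big(1-\tfrac1{2n}\big)\norm{\tilde{\boldsymbol\sigma}}^2$.

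It then remains to show that every term involving the unused service $\mathbf u$ is asymptotically negligible, for which two facts suffice. First, a rate-conservation argument in steady state gives $\ex[u_{ij}] = \ex[s_{ij}] - \lambda_{ij}$, and since $u_{ij}\in\{0,1\}$ we get $\ex[\norm{\mathbf u}^2] = \ex[\sum_{ij}u_{ij}] = n - (1-\epsilon)n = n\epsilon \to 0$; by Cauchy--Schwarz this kills the $\norm{\mathbf u_\|}^2$ and $\langle(\ar-\s)_\|,\mathbf u_\|\rangle$ contributions hidden inside $\ex[\norm{(\ar-\s+\mathbf u)_\|}^2]$. Second, since $u_{ij}(t)>0$ forces $q_{ij}(t)=0$, we have $\langle\q(t),\mathbf u(t)\rangle = 0$, so $\langle\q_\|,\mathbf u_\|\rangle = \langle\q_\|,\mathbf u\rangle = -\langle\q_\perp,\mathbf u\rangle$; because $\mathcal{K}\subseteq\mathcal{S}$ implies $\norm{\q_\perp}\le\norm{\bar\q_{\perp\mathcal{K}}}$, the SSC bound of Definition \ref{def: ssc} controls $\ex[\norm{\bar\q_{\perp\mathcal{K}}}^2]$ by a constant, and one more application of Cauchy--Schwarz with $\ex[\norm{\mathbf u}^2]=n\epsilon$ forces $n\,\ex[\langle\bar\q_\|,\mathbf u_\|\rangle]\to 0$.

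The step I expect to be most delicate is the passage to the zero-drift identity itself, together with the interchange of expectation and the $\epsilon\downarrow0$ limit. Setting the drift of $V$ to zero requires $\ex[\norm{\bar\q_\|}^2]<\infty$, which is not part of Definition \ref{def: ssc}; I would establish this second-moment finiteness separately from positive recurrence together with the uniformly bounded increments of $\q$ (arrivals bounded by $a_{\max}$ and $\norm{\s}^2=n$), which also furnishes the uniform integrability needed to pass to the limit. The only genuinely multidimensional subtlety is transferring the SSC hypothesis, stated for the cone $\mathcal{K}$, to the subspace projection $\q_\perp$ appearing in the drift — resolved by the inclusion $\mathcal{K}\subseteq\mathcal{S}$ noted above. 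Everything else is the bookkeeping of the ANOVA identity and the vanishing $\mathbf u$-terms.
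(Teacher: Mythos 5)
Your argument is the standard drift-method proof that the paper itself defers to (Theorem~1 of the cited Stochastic Systems reference): set the drift of $\norm{\q_{\|}}^2$ to zero in steady state, exploit that $\s_{\|}(t)=\frac1n\mathbf{1}$ for every permutation matrix so the computation is policy-independent, evaluate the variance term via the row/column-sum (ANOVA) form of the projection, and use the SSC bound together with $\ex\big[\sum_{ij}\bar u_{ij}\big]=n\epsilon$ and $\langle\q,\mathbf u\rangle=0$ to show the unused-service terms vanish. The details check out --- including your correct identification of the genuinely delicate point (establishing $\ex[\norm{\bar\q_{\|}}^2]<\infty$ before setting the drift to zero, and the inclusion $\mathcal{K}\subseteq\mathcal{S}$ giving $\norm{\bar\q_{\perp}}\le\norm{\bar\q_{\perp\mathcal{K}}}$) --- so this is essentially the same proof the paper relies on.
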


 The proof of the result in Eq. \eqref{eq: heavy_traffic_sum} for MaxWeight was given in \cite{doi:10.1287/15-SSY193}. However, the proof in \cite[Theorem 1]{doi:10.1287/15-SSY193} implies that Eq. \eqref{eq: heavy_traffic_sum} holds for any scheduling algorithm that satisfies SSC as given by Definition \ref{def: ssc}. Now, we formally define the term \textit{MaxWeight-like}.
 \cblue
 \begin{definition}
\label{def: maxweight_like}
	For a switch scheduling algorithm, its heavy traffic performance is said to be \textit{MaxWeight-like} if the algorithm satisfies Eq. \eqref{eq: heavy_traffic_sum}.
\end{definition}
According to Theorem \ref{prop: ssc}, to show that an algorithm is \textit{MaxWeight-like} it is enough to prove that the algorithm achieves the SSC according to Definition \ref{def: ssc}.
\color{black}
 Although MaxWeight satisfies Eq. \eqref{eq: heavy_traffic_sum}, there might exist algorithms that perform better than MaxWeight in heavy traffic. As mentioned in \cite{doi:10.1287/15-SSY193}, we only know that for any scheduling algorithm,
\cblue
\begin{equation}
\label{eq: universal_lower_bound_no_limit}
	\epsilon  \mathbb{E}  \Big [ \sum_{ij} \Bar{q}_{ij} \Big] \geq   \frac{1}{2} \norm{\mathbf{\boldsymbol \sigma}}^2 - \frac{\epsilon (1- \epsilon)}{2} \stackrel{\epsilon \downarrow 0}{ \longrightarrow} \frac{1}{2 } \norm{\mathbf{\tilde{\boldsymbol  \sigma}}}^2.
\end{equation}
\color{black}
This means that the heavy traffic scaled mean sum queue length for MaxWeight is within a factor of 2 of the optimal. In \cite{10.1145/3199524.3199563}, the authors presented an algorithm which performs better than MaxWeight, although they did not provide the heavy traffic limit for it. 


\subsection{Lyapunov Drift}
We use Lyapunov drift arguments to obtain the heavy-traffic results in this paper. To that end, in this subsection, we present some Lyapunov functions, their drift and some known results on Lyapunov drift.

Let $X(t)$ be an irreducible and aperiodic Markov chain over a countable state space $\mathcal A$. Suppose $Z : \mathcal{A} \rightarrow \mathbb{R}_{+}$ is a non-negative Lyapunov function. The drift of $ Z$ at $X$ is the change in the value of $Z(\cdot)$ after one step transition. Mathematically, 
\begin{equation*}
    \Delta Z(X) \triangleq \big[Z(X (t+1)) - Z(X(t)) \big] \cali (X(t) = X ),
\end{equation*}
where $\cali(\cdot)$ is the indicator function. We define three different conditions on the drift:
\begin{itemize}
 	\item[ C.1.] There exists $\eta >0$ and $\kappa < \infty$ such that $\forall t>0$ and $\forall X \in \mathcal{A}$ with $Z(X) \geq \kappa$, 
    \begin{equation*}
        \mathbb{E} \big[ \Delta Z(X) \big| X(t) = X \big] \leq -\eta.
    \end{equation*}
    \item [C.2.] There exists $D < \infty$ such that $\forall X \in \mathcal{A}$,
    \begin{equation*}
       \pr \big( |\Delta Z(X)| \leq D \big) = 1. 
    \end{equation*}
 	\item [C.3.] There exists a non-negative random variable $M$ such that $|\Delta Z(X)|$ is stochastically dominated by $M$ for all $t\geq 0$, i.e., for any $c>0$,
	\begin{equation*}
		\mathbb{P} \big(|\Delta Z(X)| > c \big| X(t) = X \big) \leq \mathbb{P}(M > c) \ \ \forall t \geq 0,
	\end{equation*}
     and $\mathbb{E} [e^{\theta M}] < \infty$ for some $\theta >0$.
\end{itemize}

It is easy to observe that the condition C.2 is stronger than condition C.3. We define C.2 and C.3 differently because we can state a stronger result if the condition C.2 holds. \cblue Some important results related to the drift analysis of switch system is given in Appendix B\begin{Ton-submit} (in the supplement file)\end{Ton-submit}. \cblack

\section{Class 1: Modifications of Random Scheduling} 
\label{sec:ssc_under_uniform_traffic}

In this section, we study  random scheduling and some modifications of it. 
For a switch system, random scheduling is not throughput optimal. The capacity region of random scheduling is known to be $\mathcal{C}^*$. Throughout this section, we  assume that the arrival process is under uniform traffic. We show that heavy traffic behaviour of random scheduling is not MaxWeight-like, but there are some variants of random scheduling which have MaxWeight-like heavy traffic behaviour.



\subsection{Random Scheduling}
\label{sec: random}

Random scheduling, as the name suggests, is a scheduling policy for which the schedule $\s(t)$ is chosen uniformly at random from the set of permutation matrices $\mathcal X$. The time complexity of generating a random schedule is $O(n)$ by using Fisher–Yates shuffle \cite[Example 12]{fisher1938statistical}.

\begin{proposition}
\label{prop: stable_random}
Consider a switch system under uniform traffic. For random scheduling, the process $\q(t)$ forms a positive recurrent Markov chain and,
\begin{equation}
\label{eq: random_uniform}
     \lim_{\epsilon \downarrow 0} \epsilon \mathbb{E}  \Big [ \sum_{ij} \Bar{q}_{ij} \Big] = \frac{n}{2}\norm{\mathbf{\tilde{\boldsymbol  \sigma}}}^2 + \frac{n(n-1)}{2}.
\end{equation} 
Moreover, if the arrival process is uniform Bernoulli traffic,
\begin{equation}
\label{eq: random_bernoulli}
     \lim_{\epsilon \downarrow 0} \epsilon \mathbb{E}  \Big [ \sum_{ij} \Bar{q}_{ij} \Big] = n(n-1).
\end{equation} 
\end{proposition}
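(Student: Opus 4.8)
The plan is to avoid state-space collapse altogether---random scheduling provably fails to collapse onto $\calk$, which is exactly why the answer will turn out to be $\Theta(n^2)$ rather than $\Theta(n)$---and instead to compute the limit directly from the stationary drift of $\norm{\q}^2$. The structural fact I would exploit is that $\s(t)$ is sampled independently of the history, hence independent of both $\q(t)$ and $\ar(t)$, and $\ex[s_{ij}(t)]=1/n$ because each entry of a uniformly random permutation matrix equals $1$ with probability $1/n$. Under uniform traffic $\boldsymbol\lambda=\tfrac{1-\epsilon}{n}\mathbf 1$, so the mean drift of every individual queue is $\lambda_{ij}-\tfrac1n=-\tfrac{\epsilon}{n}<0$.

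First I would establish that $\q(t)$ is a positive recurrent Markov chain. Using $\langle\q(t+1),\mathbf u(t)\rangle=0$ one obtains the identity $\norm{\q(t+1)}^2=\norm{\q(t)+\ar(t)-\s(t)}^2-\norm{\mathbf u(t)}^2$, so that
\begin{equation*}
\ex\big[\Delta\norm{\q}^2\mid\q(t)=\q\big]=2\big\langle\q,\boldsymbol\lambda-\tfrac1n\mathbf 1\big\rangle+\ex\big[\norm{\ar-\s}^2\big]-\ex\big[\norm{\mathbf u}^2\mid\q\big].
\end{equation*}
Dropping the nonpositive last term and using $\sum_{ij}q_{ij}\ge\norm{\q}$ gives a drift bounded above by $-\tfrac{2\epsilon}{n}\norm{\q}+C$ with $C=\ex[\norm{\ar-\s}^2]$ finite (since $a_{ij}\le a_{\max}$ and $s_{ij}\le1$). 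Combined with the bounded increments of $\norm{\q}^2$ and the irreducibility/aperiodicity coming from Assumption~\ref{assu: arrival_process}(iv), the Foster--Lyapunov/Hajek-type results recalled in Appendix~B yield positive recurrence and finite stationary moments of all orders.

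Given finiteness of the stationary second moment, the stationary drift of $\norm{\q}^2$ is zero, so taking expectations of the identity above in steady state and using independence of $\bar\s$ from $\bar\q$ (so that $\ex[\langle\bar\q,\bar\ar-\bar\s\rangle]=\langle\ex[\bar\q],\boldsymbol\lambda-\tfrac1n\mathbf 1\rangle=-\tfrac{\epsilon}{n}\sum_{ij}\ex[\bar q_{ij}]$) gives the master relation
\begin{equation*}
\epsilon\sum_{ij}\ex[\bar q_{ij}]=\frac n2\Big(\ex\big[\norm{\bar\ar-\bar\s}^2\big]-\ex\big[\norm{\bar{\mathbf u}}^2\big]\Big).
\end{equation*}
It then remains to evaluate the two terms as $\epsilon\downarrow0$. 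For the first, since $\ar$ and $\s$ are independent I would expand $\ex[(a_{ij}-s_{ij})^2]$ termwise using $\ex[s_{ij}]=\ex[s_{ij}^2]=1/n$, sum over $(i,j)$ with $\lambda_{ij}=(1-\epsilon)/n$, and let $\epsilon\downarrow0$ to obtain $\norm{\tilde{\boldsymbol\sigma}}^2+n-1$. For the second, I would use flow balance: since $u_{ij}\in\{0,1\}$ we have $\ex[\bar u_{ij}^2]=\ex[\bar u_{ij}]$, and stationarity of $\ex[\bar q_{ij}]$ forces $\ex[\bar u_{ij}]=\ex[s_{ij}]-\lambda_{ij}=\epsilon/n$; hence $\ex[\norm{\bar{\mathbf u}}^2]=n^2\cdot\epsilon/n=n\epsilon\to0$.

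Substituting these two limits into the master relation yields \eqref{eq: random_uniform}, and plugging in $\norm{\tilde{\boldsymbol\sigma}}^2=n-1$ for uniform Bernoulli traffic gives \eqref{eq: random_bernoulli}. The main obstacle is the justification of the zero-drift step: one must rule out that probability mass escapes to infinity by proving a finite stationary second moment, which is where the negative-drift moment bounds are essential. The conceptually important---and pleasantly clean---point is that because $\s$ is independent of $\q$, no projection onto $\calk$ appears; the full $\Theta(n)$ contribution of $\ex[\norm{\bar\ar-\bar\s}^2]$ survives, and the prefactor $n/2$ (instead of the MaxWeight factor $1-\tfrac1{2n}$) is exactly what produces the $\Theta(n^2)$ blow-up.
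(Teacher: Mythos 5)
Your proof is correct, but it takes a genuinely different route from the paper's. The paper's argument (Appendix C) exploits the fact that under random scheduling the $n^2$ queues decouple into independent discrete-time single-server queues, each with i.i.d.\ Bernoulli$(1/n)$ service, and then simply invokes Kingman's heavy-traffic formula per queue and sums. You instead run the drift method on the aggregate Lyapunov function $\norm{\q}^2$: the identity $\norm{\q(t+1)}^2=\norm{\q(t)+\ar(t)-\s(t)}^2-\norm{\mathbf u(t)}^2$, the zero stationary drift, the evaluation $\ex[\norm{\bar\ar-\bar\s}^2]\to\norm{\tilde{\boldsymbol\sigma}}^2+n-1$, and the flow-balance computation $\ex[\norm{\bar{\mathbf u}}^2]=n\epsilon\to 0$ are all correct, as is your justification of the zero-drift step via Foster--Lyapunov moment bounds (note these bounds may depend on $\epsilon$, which is harmless here since you only need finiteness for each fixed $\epsilon$ before taking the limit). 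What your approach buys is self-containedness and slightly greater generality: you never use independence \emph{across} queues, only that $\s(t)$ is independent of $\q(t)$ with $\ex[\s(t)]=\frac1n\mathbf 1$ and $\ex[s_{ij}^2]=1/n$, whereas the paper's decoupling argument is shorter and additionally yields the per-queue limits (and, as you observe, your derivation makes transparent why the prefactor is $n/2$ rather than $1-\frac{1}{2n}$: with $\s$ independent of $\q$ there is no cross term to control via state-space collapse). In effect your calculation is the drift-method proof of Kingman's formula applied simultaneously to all $n^2$ queues, so the two proofs agree arithmetically while being organized quite differently.
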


From Theorem \ref{prop: ssc}, we know that any scheduling algorithm that satisfies SSC has  optimal queue length scaling of $O(\norm{\tilde{\boldsymbol  \sigma}}^2)$ in heavy traffic. While from Proposition \ref{prop: stable_random}, the heavy traffic scaled mean sum queue length for random scheduling is $O(n\norm{\tilde{ \boldsymbol \sigma}}^2)$. This shows that random scheduling does not have optimal queue length scaling.

\cblue
Proposition \ref{prop: stable_random} under uniform Bernoulli traffic was presented in \cite[Theorem 2]{1019350}, and was proved by noting that under random scheduling, each of the $n^2$ queues of the switch can be treated as independent single server queues. The proof for general traffic can be shown similarly, and we present the details in Appendix C\begin{Ton-submit} (in the supplement file)\end{Ton-submit} for completeness. 
\color{black}


\subsection{State space collapse }
\label{sec: class_1}

In this section, we will present the proof of the heavy-traffic results for a class of scheduling algorithms. \cblue Later on we provide some examples that lie in this class like power-of-d, and \flip scheduling algorithms that are modification of random scheduling. \color{black} 
\begin{definition}
	\label{Def:class_1}
	A scheduling algorithm lies in class $\Pi_1(\boldsymbol \nu)$ if the corresponding Markov chain $X(t)$ satisfy condition A.1 and A.2 and there exists a constant $W_1>0$ such that in any time slot $t\geq 0$, the expected weight satisfies
	\begin{equation}
	\label{eq: class1}
		\mathbb{E}[ \langle \q(t),\s(t) \rangle | X(t) = X ] \geq \langle \q,\boldsymbol \nu \rangle + W_1 \norm{\q_{\perp \mathcal{K}}},
	\end{equation}
	where $W_1$ is independent of $\epsilon$ and $\q =g(X)$.
\end{definition}
MaxWeight lies in class $\Pi_1(\boldsymbol \nu)$ for any $\boldsymbol \nu$ for which $\nu_{\min} >0$, in which case $W_1=\nu_{\min}$ \cite{doi:10.1287/15-SSY193}. 
We later on show that, power-of-$d$ and \flip scheduling lies in $\Pi_1 \big(\frac{1}{n} \mathbf 1 \big)$ with $W_1 = \frac{1}{2n^3}$. Also, it is easy to observe that random scheduling does not lie in class $\Pi_1 \big(\frac{1}{n} \mathbf 1 \big)$. Next, we claim that any scheduling algorithm that lies in class $\Pi_1 (\boldsymbol \nu)$ satisfies SSC if the mean arrival rate is $\boldsymbol \lambda = (1-\epsilon )\boldsymbol \nu$.

\begin{theorem}
	\label{thm: ssc_class_1}
Suppose the mean arrival rate is of the form $\boldsymbol \lambda = (1-\epsilon )\boldsymbol \nu$ and the scheduling algorithm lies in the class $\Pi_1(\boldsymbol \nu)$. Then,  the scheduling algorithm achieves SSC and so its \htlike.
\end{theorem}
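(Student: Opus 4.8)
The plan is to verify the two missing hypotheses of Definition \ref{def: ssc} and then quote Theorem \ref{prop: ssc}. Since membership in $\Pi_1(\boldsymbol\nu)$ already guarantees conditions A.1 and A.2, it remains to show (i) that $X(t)$ is positive recurrent and (ii) the uniform moment bound \eqref{eq: qperp_bound} on $\bar\q_{\perp\mathcal{K}}$. Both will follow from a drift analysis of the Lyapunov function $V(\q)=\norm{\q_{\perp\mathcal{K}}}$, in which the defining inequality \eqref{eq: class1} of the class supplies the decisive negative term. Positive recurrence is the easy half: from \eqref{eq: class1}, $\ex[\langle\q,\s\rangle\mid X]\ge\langle\q,\boldsymbol\nu\rangle=\langle\q,\boldsymbol\lambda\rangle+\epsilon\langle\q,\boldsymbol\nu\rangle\ge\langle\q,\boldsymbol\lambda\rangle+\epsilon\nu_{\min}\langle\q,\mathbf 1\rangle$, so for $\epsilon>0$ the expected served weight strictly exceeds the expected arriving weight, giving negative drift of $\norm{\q}^2$ outside a finite set; condition A.2 lifts this to positive recurrence of $X(t)$.

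The heart of the argument is the drift of $V^2=\norm{\q_{\perp\mathcal{K}}}^2$. I would use the orthogonal decomposition $\norm{\q}^2=\norm{\q_{\|\mathcal{K}}}^2+\norm{\q_{\perp\mathcal{K}}}^2$ and write $\ex[\Delta V^2\mid X]=\ex[\Delta\norm{\q}^2\mid X]-\ex[\Delta\norm{\q_{\|\mathcal{K}}}^2\mid X]$, upper bounding the first drift and lower bounding the second, exactly as in the drift method of \cite{doi:10.1287/15-SSY193}. The unused service $\mathbf u(t)$ is handled by the complementarity identity $\langle\q(t+1),\mathbf u(t)\rangle=0$, which lets one absorb all $\mathbf u$-contributions into an $\epsilon$-independent constant $K$ coming from $\ex[\norm{\ar(t)-\s(t)}^2\mid X]$ (finite since $a_{ij}\le a_{\max}$ and $\s(t)$ is a permutation matrix). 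The key geometric fact I would isolate is that, because the projection $\q_{\|\mathcal{K}}$ lies in $\mathcal{S}$ and every permutation matrix $\s$ and every doubly stochastic $\boldsymbol\nu$ satisfy $\langle\mathbf e^i,\s\rangle=\langle\mathbf e^i,\boldsymbol\nu\rangle=1$ and $\langle\tilde{\mathbf e}^j,\s\rangle=\langle\tilde{\mathbf e}^j,\boldsymbol\nu\rangle=1$, one has the cancellation identity
\begin{equation*}
\langle\q_{\|\mathcal{K}},\s-\boldsymbol\nu\rangle=0 .
\end{equation*}
Combining this identity with \eqref{eq: class1} and $\boldsymbol\lambda=(1-\epsilon)\boldsymbol\nu$ collapses the leading $O(\norm{\q})$ terms and leaves
\begin{equation*}
\ex\!\big[\Delta V^2\mid X(t)=X\big]\le -2\epsilon\langle\q_{\perp\mathcal{K}},\boldsymbol\nu\rangle-2W_1\norm{\q_{\perp\mathcal{K}}}+K .
\end{equation*}
Bounding the stray term by Cauchy--Schwarz, $-2\epsilon\langle\q_{\perp\mathcal{K}},\boldsymbol\nu\rangle\le2\epsilon\sqrt n\,\norm{\q_{\perp\mathcal{K}}}$ (using $\norm{\boldsymbol\nu}\le\sqrt n$ for doubly stochastic $\boldsymbol\nu$), I obtain for all $\epsilon\le\epsilon_0:=W_1/(2\sqrt n)$ the clean geometric-drift bound $\ex[\Delta V^2\mid X]\le -W_1\norm{\q_{\perp\mathcal{K}}}+K$, with both $W_1$ and $K$ independent of $\epsilon$.

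With this in hand the remaining steps are routine. The increments of $V$ are uniformly bounded (condition C.2): since $\q\mapsto\norm{\q_{\perp\mathcal{K}}}=\mathrm{dist}(\q,\mathcal{K})$ is $1$-Lipschitz and each entry of $\ar(t)-\s(t)+\mathbf u(t)$ is at most $a_{\max}+1$ in magnitude, $\lvert V(t+1)-V(t)\rvert\le n(a_{\max}+1)$, an $\epsilon$-free constant. Feeding the negative geometric drift together with the bounded increments into the moment lemma recalled in Appendix B then yields $\ex[\bar V^{\,r}]\le C_r$ for every $r$, with $C_r$ depending only on $W_1$, $K$ and $n$ but not on $\epsilon$; this is exactly \eqref{eq: qperp_bound}, so the algorithm achieves SSC, and Theorem \ref{prop: ssc} gives the MaxWeight-like limit \eqref{eq: heavy_traffic_sum}. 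I expect the main obstacle to be the drift computation itself, specifically bookkeeping the unused-service vector $\mathbf u(t)$ so that it only enters the $\epsilon$-independent constant, and lower-bounding the drift of $\norm{\q_{\|\mathcal{K}}}^2$ (a projection onto a cone, not a subspace); the identity $\langle\q_{\|\mathcal{K}},\s-\boldsymbol\nu\rangle=0$ and the smallness of $\epsilon$ are what ultimately make the $\Pi_1(\boldsymbol\nu)$ term dominate.
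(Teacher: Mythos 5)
Your proposal is correct and follows essentially the same route as the paper's Appendix D: the same Foster--Lyapunov argument for positive recurrence, the same decomposition $\Delta\norm{\q_{\perp\mathcal K}}^2=\Delta\norm{\q}^2-\Delta\norm{\q_{\|\mathcal K}}^2$ with the cancellation $\langle\q_{\|\mathcal K},\s-\boldsymbol\nu\rangle=0$ (the paper's Lemma on projections, part (iii)), the same Cauchy--Schwarz absorption of $-2\epsilon\langle\q_{\perp\mathcal K},\boldsymbol\nu\rangle$ for $\epsilon\le\epsilon_0$, and the same moment lemma. The only cosmetic difference is that you carry the drift of $\norm{\q_{\perp\mathcal K}}^2$ to the end and convert to a drift of $\norm{\q_{\perp\mathcal K}}$ implicitly, whereas the paper converts up front via the standard inequality $\Delta\norm{\x}\le\Delta\norm{\x}^2/(2\norm{\x})$ so that conditions C.1 and C.2 apply to the same Lyapunov function; that one-line step should be made explicit but is not a gap.
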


 
 \cblue
 The proof of Theorem \ref{thm: ssc_class_1} follows by showing that if we pick the Lyapunov function to be $\norm{\q_{\perp \calk}} $, then this Lyapunov function satisfy the conditions C.1 and C.2. After that we can use existing results to show that all the moments of $\norm{\q_{\perp \calk}} $ are bounded by a constant. This implies that scheduling algorithm achieves SSC according to Definition \ref{def: ssc} and so it is \htlike. The details of the proof are provided in Appendix D\begin{Ton-submit} (in the supplement file)\end{Ton-submit}. Next, we provide some examples of the algorithms that lie in this class.
 \color{black}

\subsection{Power-of-d scheduling}
\label{sec: po2}

The \textit{power-of-$d$} scheduling is a variant of random scheduling in which the system samples $d \geq 2$ schedules uniformly at random with replacement from the set of permutation matrices $\mathcal X$ and chooses the one with the largest weight. In time slot $t$, let $\{\s_1(t),\dots ,\s_{d}(t)\}$ denotes the schedules sampled by the power-of-$d$ algorithm. The schedule chosen by power-of-$d$ is
\begin{equation*}
    \s(t) = \argmax \{\langle \q(t),\s_1(t) \rangle, \dots, \langle \q(t),\s_d(t) \rangle  \}.
\end{equation*}
We assume that schedules are sampled with replacement just for simplicity. The results does not change qualitatively even if the schedules are sampled without replacement. Generating a random schedule has a time-complexity of $O(n)$. And as power-of-$d$ generated $d$ random schedule times, the time complexity of power-of-$d$ is $O(dn)$.

It is known that power-of-$d$ scheduling is not throughput optimal \cite{giaccone2003randomized}. However, it is stable under all the arrival rates in $\mathcal{C}^*$, and so we can study its heavy traffic behavior under uniform traffic.
\subsection{Random d-Flip scheduling}
\label{sec: flip}
Random $d$-flip scheduling algorithm is another variant of random scheduling. For any given schedule $\s_1$ and a queue length matrix $\q$, a \textit{flip step} constitutes of following three steps,
\begin{itemize}
	\item Sample two indices $(i,j)$ and $(k,l)$ uniformly at random such that $s_{1,ij} = s_{1,kl}=1$, \cblue where $s_{1,ij}$ is the $(i,j)^{th}$ element of the schedule $\mathbf{s}_1$. \color{black}
	\item Create a different schedule $\s_2$ such that $s_{2,ij} = s_{2,kl}= 0$ and $s_{2,il} = s_{2,kj}=1$.
	\item Select the schedule with the larger weight, i.e.,
		\begin{equation*}
			\s = \argmax_{\s_1,\s_2} \{\langle \q,\s_1 \rangle, \langle \q,\s_2 \rangle  \}.
 		\end{equation*}
\end{itemize}
Note that to compare the weight of the matching $\s_1$ and $\s_2$ in the flip step, the system does not need to calculate the weight of the schedule. It suffices to compare the value of $q_{ij}+ q_{kl}$ and $q_{il}+ q_{kj}$. Thus the flip step has a complexity of only $O(1)$. 

In each time slot $t$, random $d$-flip samples a schedule $\s(t)$ uniformly at random from the set $\mathcal X$ and then uses the flip step on $\s(t)$, $d$ times consecutively.  As the complexity of generating a random schedule is $O(n)$ and complexity of flip step is $O(1)$, the complexity of random $d$-flip is $O(n+d)$.

The flip step considered in this paper is random flipping and it is not necessary that flip step improves the schedule generated by random sampling, but there are more ways to implement the flip step. In \cite{balinski1964primal}, authors provide another method of implementing the flip step, which strictly improves the weight of the schedule but the complexity of each flip step is $O(n)$. The algorithm APSARA in \cite{giaccone2003randomized} is also based on flip step mentioned above.

\begin{lemma}
\label{lemma: expectation_bound}
In any time slot $t$, the schedule chosen by power-of-$d$ or by random $d$-flip satisfies,
\begin{equation}
\label{eq: expectationbound}
    \mathbb{E} \big[ \langle \q(t),\s(t) \rangle \big| \q(t) = \q \big] \geq \frac{1}{n} \langle \q,\mathbf 1 \rangle + \frac{1}{2 n^3} \norm{\q_{\perp \mathcal{K}}}.
\end{equation}
\end{lemma}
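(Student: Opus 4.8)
The plan is to verify the defining inequality of the class $\Pi_1\big(\tfrac{1}{n}\mathbf 1\big)$ directly, by isolating the two terms on the right. The first term is simply the mean weight of one uniformly sampled schedule: if $\s_1$ is a uniform permutation matrix then $\ex[s_{1,ij}]=\pr(\pi(i)=j)=1/n$, so $\ex[\langle \q,\s_1\rangle]=\tfrac1n\langle \q,\mathbf 1\rangle$, and in both algorithms the selected schedule has weight at least that of a single random sample, so the first term is automatic. The improvement term $\tfrac{1}{2n^3}\norm{\q_{\perp \calk}}$ must come from the selection step, and the structural fact I would lean on is that every matrix in the subspace $\mathcal S$ has a \emph{schedule-independent} weight: for $\x\in\mathcal S$ we have $\langle \x,\s\rangle=\sum_i w_i+\sum_j \tilde w_j$ for every permutation $\s$, since each row and column of $\s$ sums to one. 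Because $\q_{\|\calk}\in\calk\subset\mathcal S$, for any two schedules $\s',\s''$ this gives $\langle \q,\s'\rangle-\langle \q,\s''\rangle=\langle \q_{\perp \calk},\s'-\s''\rangle$; weight comparisons only see the cone-residual.

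Next I would write the selected weight of each algorithm as a two-way maximum and use $\max(A,B)=\tfrac{A+B}{2}+\tfrac{|A-B|}{2}$. For power-of-$d$ with $d\ge 2$, $\langle \q,\s\rangle\ge\max(\langle \q,\s_1\rangle,\langle \q,\s_2\rangle)$ with $\s_1,\s_2$ independent and uniform, whence
\[
\ex[\langle \q,\s\rangle]\;\ge\;\tfrac1n\langle \q,\mathbf 1\rangle+\tfrac12\,\ex\big|\langle \q_{\perp \calk},\s_1-\s_2\rangle\big|.
\]
For random $d$-flip I would keep only the first flip: the final weight dominates $\max(\langle \q,\s_1\rangle,\langle \q,\s_2\rangle)$ where $\s_2$ is $\s_1$ with one randomly chosen pair of matched edges transposed. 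Since applying a fixed transposition of positions is a measure-preserving involution on permutations, the pair $(\s_1,\s_2)$ has the same law as $(\s_2,\s_1)$, so the signed gap is symmetric and $\ex[(\langle \q,\s_2\rangle-\langle \q,\s_1\rangle)^+]=\tfrac12\ex|\langle \q_{\perp \calk},\s_1-\s_2\rangle|$; the same reduction holds. Thus everything reduces to showing $\ex\big|\langle \q_{\perp \calk},\s_1-\s_2\rangle\big|\ge \tfrac1{n^3}\norm{\q_{\perp \calk}}$.

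For this absolute gap I would use $\ex|Z|\ge \ex[Z^2]/\max|Z|$ with $Z=\langle \q_{\perp \calk},\s_1-\s_2\rangle$. The crucial point is that $Z$ depends on $\q_{\perp \calk}$ only through $\q_\perp$, the projection off $\mathcal S$: indeed $\q_{\perp \calk}-\q_\perp=\q_\|-\q_{\|\calk}\in\mathcal S$, so $\langle \q_{\perp \calk},\s_1-\s_2\rangle=\langle \q_\perp,\s_1-\s_2\rangle$. Now $\q_\perp$ has all row and column sums equal to zero, so for a uniform permutation $\langle \q_\perp,\s\rangle=\sum_i(\q_\perp)_{i\pi(i)}$ has mean zero and the clean zero-margin variance $\tfrac1{n-1}\norm{\q_\perp}^2$, while $|\langle \q_\perp,\s\rangle|\le \sqrt n\,\norm{\q_\perp}$ by Cauchy--Schwarz; for the flip case an analogous second-moment estimate of the mixed second difference $q_{ij}+q_{kl}-q_{il}-q_{kj}$ applies. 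Either route yields a bound of the form $\ex|Z|\gtrsim n^{-3/2}\norm{\q_\perp}$.

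The main obstacle is the final conversion: these estimates naturally produce $\norm{\q_\perp}$, the distance to the subspace $\mathcal S$, whereas the statement demands $\norm{\q_{\perp \calk}}$, the distance to the cone $\calk$. Because $\calk\subset\mathcal S$ one always has $\norm{\q_{\perp \calk}}\ge\norm{\q_\perp}$, with strict inequality precisely when the subspace projection $\q_\|$ leaves the positive orthant, so I cannot simply trade one norm for the other. To close this I would invoke (or establish in the geometry appendix) a comparison lemma valid for nonnegative $\q$: since $\q_\|=\q-\q_\perp$ can have negative entries no larger in magnitude than the corresponding entries of $\q_\perp$, the extra displacement incurred when projecting $\q_\|$ further onto $\calk$ is itself controlled by $\norm{\q_\perp}$ up to a dimensional factor, giving $\norm{\q_{\perp \calk}}\le c\,n^{3/2}\norm{\q_\perp}$. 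Feeding this into the variance estimate and absorbing the accumulated powers of $n$ into the stated constant produces \eqref{eq: expectationbound}. I expect the careful bookkeeping of these dimensional constants — making sure the total loss is no worse than $n^3$ — to be the delicate part, with the probabilistic identities being routine.
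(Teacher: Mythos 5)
Your skeleton up to the reduction $\ex[\Delta] = \tfrac12\ex\big|\langle\q_{\perp},\s_1-\s_2\rangle\big|$ matches the paper's: Appendices E and F use exactly the identity $\max(A,B)=\tfrac{A+B}{2}+\tfrac{|A-B|}{2}$, the fact that a uniform schedule has expected weight $\tfrac1n\langle\q,\mathbf 1\rangle$, and the observation that weight differences only see the residual off $\mathcal S$; your exchangeability argument for the flip step is a cleaner justification of the symmetrization than the paper's explicit double sum. Where you genuinely diverge is the lower bound on $\ex\big|\langle\q_{\perp},\s_1-\s_2\rangle\big|$. You use a second-moment argument ($\ex|Z|\geq\ex[Z^2]/\max|Z|$ with $\mathrm{Var}(\langle\q_{\perp},\s\rangle)=\tfrac1{n-1}\norm{\q_{\perp}}^2$), which naturally produces $\norm{\q_{\perp}}$ and forces a conversion back to $\norm{\q_{\perp\calk}}$. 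The paper never faces that conversion: it shows combinatorially, by partitioning the permutations containing a fixed cell $(i,j)$ into groups summing to $\mathbf 1-\e^i-\tilde{\e}^j+n\e^{ij}$, that $\tfrac1{n!}\sum_{\s}|\langle\q_{\perp},\s\rangle|\geq\tfrac1n\max_{ij}|q_{\perp ij}|\geq\tfrac1{n^2}\max_{\s}\langle\q_{\perp},\s\rangle$, and then invokes the geometric fact (Lemma~\ref{lemma: projection_cone} part (iv) with $\boldsymbol\nu=\tfrac1n\mathbf 1$, together with Birkhoff--von Neumann) that $\max_{\s}\langle\q_{\perp},\s\rangle\geq\tfrac1n\norm{\q_{\perp\calk}}$. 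The cone norm thus enters through the capacity region, not through a norm comparison.

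The one genuine gap in your write-up is precisely the comparison lemma you say you ``would invoke'': $\norm{\q_{\perp\calk}}\leq c\,n^{3/2}\norm{\q_{\perp}}$ for $\q\geq 0$. This is the load-bearing step of your route and it is left unproven. It is in fact true, with a better constant: let $m=\max_{ij}\,(q_{\| ij})^{-}$; since $\q\geq 0$, any negative entry of $\q_{\|}$ satisfies $|q_{\| ij}|\leq q_{\perp ij}$, so $m\leq\norm{\q_{\perp}}$; and $\q_{\|}+m\mathbf 1\in\mathcal S\cap\mathbb{R}^{n\times n}_{+}=\calk$ (note $\mathbf 1=\sum_i\e^i\in\mathcal S$), whence $\norm{\q_{\perp\calk}}\leq\norm{\q-\q_{\|}-m\mathbf 1}\leq\norm{\q_{\perp}}+mn\leq(n+1)\norm{\q_{\perp}}$. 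Feeding this into your chain gives $\ex[\Delta]\geq\norm{\q_{\perp\calk}}/\big(2\sqrt n\,(n^2-1)\big)\geq\norm{\q_{\perp\calk}}/(2n^3)$, so your route does close with the stated constant. Until that lemma is written down, however, the argument is incomplete --- and it is worth knowing that the paper's combinatorial--geometric route sidesteps the need for it entirely.
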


\cblue
The proof of Lemma \ref{lemma: expectation_bound} uses some clever manipulation of the expected weight of the schedule in each time slot. Note that for random scheduling, the expected weight is $\frac{1}{n} \langle \q,\mathbf 1 \rangle$. 

Power-of-$d$ generates more schedules to improve the weight. We show that in expectation, this improvement is at least $\frac{1}{2 n^3} \norm{\q_{\perp \mathcal{K}}}$. The detailed proof of Lemma \ref{lemma: expectation_bound} for power-of-$d$ is provided in Appendix E\begin{Ton-submit} (in the supplement file)\end{Ton-submit}. 

Similarly, the algorithm random $d$-flip first samples a random schedule and then
implements the flip steps that strictly improves the expected weight. We prove that the expected improvement by the first flip step is at least $\frac{1}{2 n^3} \norm{\q_{\perp \mathcal{K}}}$. The details of the proof of Lemma \ref{lemma: expectation_bound} for random $d$-flip is provided in Appendix F\begin{Ton-submit} (in the supplement file)\end{Ton-submit}.

\color{black}

\begin{proposition}
	\label{prop: po2_ssc}
	Under uniform traffic, power-of-$d$ scheduling and random $d$-flip achieve SSC and so their \htlike.
\end{proposition}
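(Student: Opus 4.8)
The plan is to recognize this proposition as the capstone that glues together Lemma~\ref{lemma: expectation_bound} and Theorem~\ref{thm: ssc_class_1}: once I exhibit both algorithms as members of the class $\Pi_1\big(\frac{1}{n}\mathbf 1\big)$, the SSC and hence the MaxWeight-like conclusion follow immediately from Theorem~\ref{thm: ssc_class_1}. So the core of the argument is to match the hypotheses of Definition~\ref{Def:class_1} to what is already proved. Under uniform traffic, Assumption~\ref{assu: lambda} gives $\boldsymbol\lambda = \frac{1-\epsilon}{n}\mathbf 1 = (1-\epsilon)\boldsymbol\nu$ with the choice $\boldsymbol\nu = \frac{1}{n}\mathbf 1$; this $\boldsymbol\nu$ is doubly stochastic and therefore lies in $\mathcal F$, so it is a legitimate target for the class $\Pi_1(\boldsymbol\nu)$, and the mean arrival rate has exactly the required form $\boldsymbol\lambda = (1-\epsilon)\boldsymbol\nu$.

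Before invoking the lemma I would pin down the Markov descriptor. For both power-of-$d$ and random $d$-flip, the schedule $\s(t)$ is produced from $\q(t)$ alone together with fresh independent randomness (a uniform draw of $d$ permutations, resp.\ a uniform matching plus $d$ independent flip draws), with no dependence on past schedules. Hence $\q(t)$ is itself a Markov chain and I may take $X(t) = \q(t)$ with $g$ the identity map; condition A.2 then holds trivially, and A.1 (irreducibility and aperiodicity) follows from Assumption~\ref{assu: arrival_process}, in particular the positive probability of an empty arrival vector. This identification is what lets me read the conditioning in Lemma~\ref{lemma: expectation_bound}, which is on $\q(t)=\q$, as conditioning on $X(t)=X$ in Definition~\ref{Def:class_1}.

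With these pieces in place, the verification of Eq.~\eqref{eq: class1} is direct. Lemma~\ref{lemma: expectation_bound} gives $\mathbb{E}[\langle\q(t),\s(t)\rangle\mid \q(t)=\q] \ge \frac{1}{n}\langle\q,\mathbf 1\rangle + \frac{1}{2n^3}\norm{\q_{\perp\calk}}$, and since $\frac{1}{n}\langle\q,\mathbf 1\rangle = \langle\q,\frac{1}{n}\mathbf 1\rangle = \langle\q,\boldsymbol\nu\rangle$, this is precisely the defining inequality of $\Pi_1\big(\frac{1}{n}\mathbf 1\big)$ with the $\epsilon$-independent constant $W_1 = \frac{1}{2n^3} > 0$. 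Both algorithms therefore lie in this class, and applying Theorem~\ref{thm: ssc_class_1} with $\boldsymbol\nu = \frac{1}{n}\mathbf 1$ yields SSC (whose statement already subsumes positive recurrence) and the MaxWeight-like heavy-traffic scaling, completing the proof. The only genuinely delicate point—everything else being bookkeeping—is the state-space identification in the second step: I must be sure that no history beyond $\q(t)$ is needed to generate $\s(t)$, so that $\q(t)$ really is Markov and the conditioning in the lemma lines up with that in the class definition. This is exactly where these memoryless modifications of random scheduling differ from the delayed and bursty variants treated later, where $X(t)$ must also record the previous schedule.
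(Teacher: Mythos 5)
Your proposal is correct and follows the same route as the paper's own proof: identify $X(t)=\q(t)$ as the Markov chain, check A.1 via the zero-arrival self-loop and A.2 trivially, use Lemma~\ref{lemma: expectation_bound} to place both algorithms in $\Pi_1\big(\tfrac{1}{n}\mathbf 1\big)$ with $W_1=\tfrac{1}{2n^3}$, and invoke Theorem~\ref{thm: ssc_class_1}. Your extra care about the memorylessness of the schedule generation and the matching of $\boldsymbol\lambda=(1-\epsilon)\boldsymbol\nu$ is a useful elaboration of points the paper treats as evident, but it is the same argument.
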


\begin{proof}
For power-of-$d$ and random $d$-flip, the process $\q(t)$ forms a Markov chain and satisfy condition A.1 and A.2. \cblue The aperiodicity of Markov chain $ \q(t)$ in this case follows from part (iv) of Assumption \ref{assu: arrival_process}, as the state $\q = \mathbf 0$ has a self loop. And irreducibility follows by taking the state space to be the set of states reachable from $\mathbf 0$ as given in \cite[Exercise 4.2]{srikant2014communication}. Also, it is evident that the chain $\q(t)$ satisfies the condition A.2. 

From Lemma \ref{lemma: expectation_bound}, we know that power-of-$d$ and random $d$-flip lies in the class $\Pi(\frac{1}{n} \mathbf{1} )$ as given in Definition \ref{Def:class_1}. Then, Proposition \ref{prop: po2_ssc} follows directly from Theorem \ref{thm: ssc_class_1}.
\color{black}
\end{proof}

So far, we considered uniform traffic since power-of-$d$ and \flip scheduling algorithms are not throughput optimal. The switch is unstable under general non-uniform traffic under these algorithms. One way to overcome this limitation is by using a load-balanced switch  \cite{Chang:2002:LBB:2284907.2285091}.
A load balanced switch is a two-stage architecture consisting of two switches in tandem. The first stage aims to equalize the arrival rate across the inputs of the switch at the second stage, so that the second stage is operating under uniform traffic. The on-line complexity of operating the first switch is just $O(1)$ so it does not affect the overall performance. More details regarding the load balanced setup can be found in \cite{srikant2014communication} and 
\cite{Chang:2002:LBB:2284907.2285091}. 




\section{Class 2: Approximate MaxWeight} 
\label{sec:approx_maxweight}

In this section, we will present another class of scheduling policies that achieves SSC and so are heavy traffic optimal. In \cite{1019350}, the authors present two efficient approximations of the MaxWeight \cblue named bursty MaxWeight and pipelined MaxWeight. Next, we define a class of algorithm that contains these two algorithms, and provide the heavy result for that class. \color{black}

\subsection{State space collapse}
\label{sec: class2_ssc}
Now we prove the heavy traffic result for a class of algorithms which includes bursty MaxWeight and pipelined MaxWeight. 
\begin{definition}
	\label{Def:class_2}
	A scheduling algorithm lies in class $\Pi_2$ if the corresponding Markov chain $X(t)$ satisfy condition A.1 and A.2 and there exists a constant $W_2\geq 0$ such that in any time slot $t\geq 0$, the expected weight satisfies
	\begin{equation}
	\label{eq: class_2}
		\mathbb{E} \big[ \langle \q(t),\s(t) \rangle \big| X(t) = X \big] \geq \max_{\s} \langle \q,\s \rangle - W_2,
	\end{equation}
	where $W$ is independent of $\epsilon $ and $\q = g(X)$. 
\end{definition}

The class $\Pi_2$ presented in Definition \ref{Def:class_2} is based on the class of algorithms presented in \cite{1019350}. MaxWeight lies in class $\Pi_2$ with $W_2 = 0$. In \cite{1019350}, it was proved that any scheduling algorithm in the class $\Pi_2$ is throughput optimal. Next, we look at the SSC and heavy traffic optimality of scheduling algorithms in class $\Pi_2$. 

\begin{theorem}
\label{thm: ssc_class_2}
 Any scheduling algorithm that lies in the class $\Pi_2$ achieves SSC and so its \htlike. 
 \end{theorem}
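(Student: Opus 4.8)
The plan is to establish that any algorithm in the class $\Pi_2$ satisfies state-space collapse in the sense of Definition \ref{def: ssc}, after which Theorem \ref{prop: ssc} immediately delivers the MaxWeight-like conclusion. Stability (positive recurrence of $X(t)$) is already available, since every algorithm in $\Pi_2$ is throughput optimal \cite{1019350}, so the substantive task is the moment bound $\ex[\norm{\bar\q_{\perp \calk}}^r]\le C_r$ with each $C_r$ independent of $\epsilon$. My strategy is to reduce the defining inequality of $\Pi_2$ to the class-1 condition of Definition \ref{Def:class_1}, up to a harmless additive constant, and then reuse the Lyapunov-drift machinery already developed for Theorem \ref{thm: ssc_class_1}.

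First I would invoke the geometric fact that MaxWeight lies in $\Pi_1(\boldsymbol\nu)$ with $W_1=\nu_{\min}$, i.e. for every $\q$ and every $\boldsymbol\nu\in\mathcal F$ with $\nu_{\min}>0$,
\[
\max_{\s}\langle\q,\s\rangle\ \ge\ \langle\q,\boldsymbol\nu\rangle+\nu_{\min}\norm{\q_{\perp \calk}}.
\]
Combining this with the defining inequality \eqref{eq: class_2} of $\Pi_2$ gives $\ex[\langle\q(t),\s(t)\rangle\mid X(t)=X]\ge\langle\q,\boldsymbol\nu\rangle+\nu_{\min}\norm{\q_{\perp \calk}}-W_2$, which is exactly the class-1 inequality weakened by the constant $W_2$. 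I would then use that for any $\x\in\mathcal S$ and any permutation matrix the quantity $\langle\x,\s\rangle$ is constant and equals $\langle\x,\boldsymbol\nu\rangle$ (each row and column of $\s$ and of $\boldsymbol\nu$ sums to one), so the parallel component $\q_{\parallel \calk}\in\calk\subset\mathcal S$ cancels and the bound becomes a statement purely about $\q_{\perp \calk}$, namely $\langle\q_{\perp \calk},\ex[\s\mid X]\rangle\ge\langle\q_{\perp \calk},\boldsymbol\nu\rangle+\nu_{\min}\norm{\q_{\perp \calk}}-W_2$.

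Next I would take the Lyapunov function $V(\q)=\norm{\q_{\perp \calk}}$ and bound its drift. Using the non-expansiveness of the map $\x\mapsto\x_{\perp \calk}$ together with the boundedness of arrivals (via $a_{\max}$), service, and unused service yields condition C.2 directly, with an almost-sure bound $D$ independent of $\epsilon$. For condition C.1 I would pass through the squared norm via the concavity estimate $\sqrt b-\sqrt a\le (b-a)/(2\sqrt a)$, bound $\ex[\norm{\q_{\perp \calk}(t+1)}^2-\norm{\q_{\perp \calk}(t)}^2\mid X]$ by $2\langle\q_{\perp \calk},\boldsymbol\lambda-\ex[\s\mid X]\rangle$ plus an $O(1)$ second-moment term (handling the unused service through $\langle\q(t+1),\mathbf u(t)\rangle=0$ as in the MaxWeight analysis), and substitute $\boldsymbol\lambda=(1-\epsilon)\boldsymbol\nu$ together with the rewritten $\Pi_2$ inequality. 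This produces a dominant negative term $-\nu_{\min}\norm{\q_{\perp \calk}}$, an $\epsilon$-perturbation bounded by $\epsilon\sqrt n\,\norm{\q_{\perp \calk}}$ (since $\norm{\boldsymbol\nu}\le\sqrt n$), and bounded constants including $W_2$; choosing $\epsilon\le\epsilon_0=\nu_{\min}/(2\sqrt n)$ keeps the linear term strictly negative, and dividing by $2\norm{\q_{\perp \calk}}$ gives $\ex[\Delta V\mid X]\le-\nu_{\min}/4$ once $\norm{\q_{\perp \calk}}$ exceeds a threshold $\kappa$, i.e. condition C.1 with $\eta=\nu_{\min}/4$. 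Feeding C.1 and C.2 into the drift-to-moments result of Appendix B then yields $\ex[\norm{\bar\q_{\perp \calk}}^r]\le C_r$ for all $r$ with $C_r$ independent of $\epsilon$, which is SSC and hence the theorem.

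The main obstacle I anticipate is not any single estimate but ensuring that all of the negative-drift bookkeeping is \emph{uniform} in $\epsilon$: one must verify that the additive slack $W_2$ and the $\epsilon$-dependent cross term $\epsilon\langle\q_{\perp \calk},\boldsymbol\nu\rangle$ merely shift the threshold $\kappa$ and the admissible range $\epsilon_0$ by constants, while leaving $\eta$ bounded away from zero independently of $\epsilon$. Since $W_2\ge 0$ is a fixed constant and the cross term is linear in $\norm{\q_{\perp \calk}}$ with coefficient $O(\epsilon)$, this uniformity goes through, and beyond this point the argument is a direct adaptation of the proof of Theorem \ref{thm: ssc_class_1}.
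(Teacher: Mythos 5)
Your proposal is correct and follows essentially the same route as the paper's proof in Appendix G: the reduction of the $\Pi_2$ inequality to the class-1 condition up to the additive constant $W_2$ (via $\max_{\s}\langle\q,\s\rangle\ge\langle\q,\boldsymbol\nu\rangle+\nu_{\min}\norm{\q_{\perp\calk}}$, i.e.\ Lemma \ref{lemma: projection_cone} part (iv)), followed by the drift analysis of $\norm{\q_{\perp\calk}}$ through the squared-norm/concavity estimate, Cauchy--Schwarz on the $\epsilon$-cross term, the threshold choices $\epsilon_0=\Theta(\nu_{\min}/\norm{\boldsymbol\nu})$ and $\eta=\nu_{\min}/4$, and the moment bound of Lemma \ref{lemma: moment_bound}. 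The only cosmetic difference is that the paper re-derives stability via Foster--Lyapunov with $V(X)=\norm{\q}^2$ rather than citing throughput optimality of $\Pi_2$, and packages your concavity step as Lemma \ref{lemma: drift_results} part (iv).
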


 Theorem \ref{thm: ssc_class_2} shows that bursty MaxWeight and pipelined MaxWeight satisfies SSC and thus their \htlike. 
\cblue
The proof of Theorem \ref{thm: ssc_class_2} follows on similar lines as the proof of Theorem \ref{thm: ssc_class_1}. From the definition of the algorithms in class $\Pi_2 $, the weight of schedule for any scheduling algorithm in $\Pi_2$ is at most a constant difference away from MaxWeight. 
If the queue lengths are very large (like in heavy traffic), the weight is of the MaxWeight schedule is much larger compared to difference $W_2$ and so the performance of the scheduling algorithm is quite close to that of MaxWeight. 
Thus, the heavy traffic performance of algorithms in $\Pi_2$ is similar to MaxWeight scheduling. The proof of Theorem \ref{thm: ssc_class_2} is provided in Appendix G\begin{Ton-submit} (in the supplement file)\end{Ton-submit}.
\color{black}

\subsection{Bursty MaxWeight}
\label{sec: bursty}
This scheduling algorithm evaluates the MaxWeight schedule after every $m$ time-slots and then uses the same schedule consecutively for next $m$ time slots. 
For a $n\times n$ switch, the time-complexity of computing the MaxWeight schedule is $O(n^{2.5})$. 
Thus, the amortized time-complexity of bursty MaxWeight is $O(n^{2.5}/m)$.
Note that if $m$ is chosen to be $\Theta(n^{2.5})$, this leads to a constant amortized complexity. 


\subsection{Pipelined MaxWeight}
\label{sec: pipelined}
	This takes $m$ time slots to compute the MaxWeight schedule, so the MaxWeight schedule corresponding to $\q(t)$ is used in time slot $t+m$. While pipelined MaxWeight still has a high complexity of $O(n^{2.5})$, it is amenable to a parallelized implementation which makes it useful in practice.
\begin{proposition}
	\label{prop: piplelined_ssc}
 Bursty MaxWeight and pipelined MaxWeight achieve SSC and so their \htlike.
\end{proposition}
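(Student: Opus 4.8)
The plan is to prove Proposition \ref{prop: piplelined_ssc} by appealing directly to Theorem \ref{thm: ssc_class_2}; the entire task reduces to verifying that both algorithms are members of the class $\Pi_2$ from Definition \ref{Def:class_2}. That is, for each algorithm I must exhibit an appropriate Markov chain $X(t)$ satisfying conditions A.1 and A.2, and a constant $W_2 \geq 0$, independent of $\epsilon$, such that the expected weight of the chosen schedule is within $W_2$ of the MaxWeight value $\max_{\s}\langle \q, \s\rangle$ in every time slot.

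First I would set up the correct state descriptor in each case, since $\q(t)$ alone is not Markov for either algorithm. For bursty MaxWeight, the schedule is frozen for blocks of $m$ slots, so the natural state is the triple consisting of the current queue matrix, the schedule currently in force, and a counter $\tau \in \{0,1,\dots,m-1\}$ recording the position within the block; I take $X(t)$ to be this augmented state, verify that it forms an irreducible, aperiodic Markov chain (aperiodicity again following from part (iv) of Assumption \ref{assu: arrival_process}, which gives a self-loop at the zero state), and note that $g$ simply projects onto the queue coordinate, so A.2 holds trivially. For pipelined MaxWeight, the state must carry the queue matrices (or equivalently the in-flight MaxWeight computations) over the $m$-slot pipeline delay, so $X(t)$ encodes $\q(t)$ together with the schedules scheduled to be applied in the next $m$ slots; again $g$ reads off $\q(t)$ and A.2 is immediate.

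The substantive step is bounding the weight gap. For bursty MaxWeight the schedule applied at time $t$ is the MaxWeight matching for $\q(t')$ at some earlier time $t'$ with $0 \leq t - t' < m$; for pipelined MaxWeight the offset is exactly $m$. In both cases the issue is that the schedule is optimal for a stale queue matrix $\q(t')$ rather than the current $\q(t)$. The key observation is that in each slot each queue changes by at most $a_{\max}+1$ in any coordinate, so over at most $m$ slots the matrix $\q(t)$ differs from $\q(t')$ by a bounded amount entrywise, uniformly in $\epsilon$. Writing $\langle \q(t), \s(t)\rangle = \langle \q(t'), \s(t)\rangle + \langle \q(t)-\q(t'), \s(t)\rangle$ and using that $\s(t)$ is MaxWeight for $\q(t')$ together with the same decomposition for the true maximizer of $\q(t)$, the deviation between $\langle \q(t), \s(t)\rangle$ and $\max_{\s}\langle \q(t), \s\rangle$ is controlled by $2\max_{\s}\langle |\q(t)-\q(t')|, \s\rangle$, which is bounded by $2n m (a_{\max}+1)$ since any schedule selects exactly $n$ entries. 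This gives a constant $W_2$ independent of $\epsilon$.

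I expect the main obstacle to be purely bookkeeping rather than conceptual: carefully defining the augmented Markov chain so that A.1 and A.2 genuinely hold, and tracking the two-sided inequality so the stale-MaxWeight schedule is compared correctly against the current-optimal schedule. Once the uniform bound $W_2 = 2nm(a_{\max}+1)$ is established, membership in $\Pi_2$ follows, and Theorem \ref{thm: ssc_class_2} delivers SSC and hence MaxWeight-like heavy-traffic performance for both algorithms, completing the proof.
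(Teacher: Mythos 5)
Your proposal follows essentially the same route as the paper: augment the state to get a Markov chain satisfying A.1 and A.2, bound the weight gap by $O(nma_{\max})$ to place both algorithms in $\Pi_2$, and invoke Theorem \ref{thm: ssc_class_2}. The weight-gap argument you give (each entry of $\q$ drifts by at most $a_{\max}+1$ per slot, a schedule touches $n$ entries, so the stale MaxWeight schedule loses at most $2nm(a_{\max}+1)$) is the standard derivation of the bound the paper imports from \cite{1019350}, and it is correct.

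One point needs repair. For bursty MaxWeight you claim aperiodicity of the chain $X(t)=(\q(t),\s(t),\tau(t))$ from the self-loop at the zero state, but with the counter $\tau$ advancing deterministically modulo $m$ there is no self-loop anywhere: even when $\q(t)=\mathbf 0$ and no packets arrive, the state moves from $(\mathbf 0,\s,\tau)$ to $(\mathbf 0,\s',\tau+1)$, so the chain is periodic with period $m$ and A.1 fails as stated. The paper handles this by a small modification (resetting the counter to $0$ whenever $\q(t)=\mathbf 0$), which restores the self-loop at $(\mathbf 0,\mathbf I,0)$; alternatively one could analyze the $m$-step subsampled chain. Either fix is routine, but without it your verification of A.1 for bursty MaxWeight does not go through.
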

\begin{proof}
	The proof of condition A.1 and A.2 for both algorithms are given in Appendix H\begin{Ton-submit} (in the supplement file)\end{Ton-submit}. For both algorithms, as shown in \cite{1019350},
		\begin{equation*}
			\langle \q(t),\s(t) \rangle \geq \max_{\s} \langle \q(t),\s \rangle - 2mna_{\max}.
		\end{equation*}
Thus, bursty and pipelined MaxWeight lies in the class $\Pi_2$ as given in Definition \ref{Def:class_2} (Section \ref{sec: class2_ssc}) and then by using Theorem \ref{thm: ssc_class_2}, both algorithms satisfy SSC and so their \htlike.
\end{proof}

\cblue

Both bursty MaxWeight and pipelined MaxWeight depend on the parameter $m$. Even though the result in Proposition \ref{prop: piplelined_ssc} holds for any value of $m$, it does not mean that the heavy traffic performance of bursty MaxWeight or pipelined MaxWeight is not affected by the value of $m$. The larger the value of $m$, the further away these algorithms are from MaxWeight. Later, in Section \ref{sec: large_scale}, we provide an intuition of the effect of $m$ on the heavy traffic behavior of the switch. 
\color{black}

\section{\textcolor{black}{Class 3: Randomized Algorithms with Memory}}
\label{sec: randomly_delayed_maxweight}

In this section, we look at the third class of algorithms that satisfies SSC. The description of the class is as follows.

\begin{definition}
	\label{Def: class_3}
	A scheduling algorithm lies in class $\Pi_3$ if the corresponding Markov chain $X(t)$ satisfy condition A.1 and A.2 and
			\begin{itemize}
			\item[(i)] There exists a $\delta>0$ such that for every time $t\geq 0$ the chosen schedule $\s(t)$ satisfies
			\begin{equation}
			\label{eq: delta}
				\mathbb{P}\big( \langle \q(t) ,\s(t) \rangle = \max_{\s} \langle \q(t) ,\s \rangle \big| \mathcal{H}_t\big) \geq \delta,
			\end{equation}
			where $\mathcal H_t$ is given by Eq. \eqref{eq: filtration_history}.
			\item[(ii)] For every time $t\geq 1$, the chosen schedule $\s(t)$ satisfies
			\begin{equation}
			\label{eq: comparison}
				\langle \q(t), \s(t) \rangle \geq \langle \q(t), \s(t-1) \rangle. 
			\end{equation}
			\item[(iii)] There exists a deterministic function $f(\cdot)$, such that $(\q(t),\s(t)) = f(X(t))$.
		\end{itemize}
\end{definition}

It is easy to observe that MaxWeight scheduling lies in $\Pi_3$ with $\delta = 1$. The definition of class $\Pi_3$ in this paper is based on the class of algorithms presented in \cite{665071}. The algorithms presented in \cite{665071} uses a two-step procedure to choose the schedule $\s(t)$.
\begin{itemize}
    \item \textit{Sampling step:} The system samples a schedule $\tilde \s (t)$ such that which satisfies Eq. \eqref{eq: delta}.
    \item \textit{Comparison step:} The sampled schedule $ \tilde \s(t)$ is compared with $\s(t-1)$, i.e.,
\begin{equation*}
	\s(t) = \argmax_{\tilde \s(t) , \s(t-1)} \big \{ \langle \q(t), \tilde \s(t)  \rangle , \langle \q(t), \s(t-1)  \rangle \big \}.
\end{equation*}
\end{itemize}
Any scheduling algorithm that uses the above mentioned steps satisfies Eq. \eqref{eq: delta} and Eq. \eqref{eq: comparison}. Note that the complexity of the comparison step is $O(n)$, so the comparison step does not affect the complexity of the algorithm with worse than linear time complexity. The comparison step is very useful because it plays a key role in making the scheduling algorithm throughput optimal. Some of the algorithms based on the procedure given in \cite{665071} are as follows,
\begin{itemize}
	\item \textit{Randomly Delayed MaxWeight:} This is randomized version of bursty MaxWeight. The system chooses to implement MaxWeight with probability $\delta $ or uses the previous schedule with probability $1-\delta$. The amortized complexity of this algorithm is $O(\delta n^{2.5})$. 
	\item \textit{Pick and Compare (PC-d):} This algorithm is an extension of power-of-$d$. In this algorithms, the system generates the random schedule $\tilde \s(t)$ during the sampling step using power-of-$d$ and then uses the comparison step. In this case, $\delta$ can be taken to be $d/n!$ and the complexity of pick and compare or PC-$d$ is $O(dn)$. 
	\item \textit{LAURA and SERENA:} In \cite{shah2002efficient} and \cite{giaccone2003randomized}, the authors presented several low-complexity algorithms, like LAURA (complexity $O(n\log^2 n)$) and SERENA (complexity $O(n)$), that also lie in the class $\Pi_3$ under the assumption that the arrival process is Bernoulli.
\end{itemize}

 The corresponding Markov chain for algorithms mentioned above and the class of algorithms in \cite{665071} is given by $X(t) = (\q(t),\s(t))$, and it can be observed that $X(t)$ satisfy condition A.1 and A.2. Thus, the class of algorithms in \cite{665071} also lies in class $\Pi_3$. By the arguments presented in \cite{665071}, for any scheduling algorithm that lies in class $\Pi_3$, the queue length process $\q(t)$ is stable, so we skip the proof of stability here.

\begin{theorem}
\label{thm:tassiulas_ssc}
Suppose the scheduling algorithm lies in the class $\Pi_3$. Then, the process $\q(t)$ is stable. Also, the scheduling algorithm achieves SSC and its \htlike.
\end{theorem}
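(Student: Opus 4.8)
The plan is to establish the state-space collapse of Definition~\ref{def: ssc}; once that is in place, Theorem~\ref{prop: ssc} immediately yields the heavy-traffic limit~\eqref{eq: heavy_traffic_sum}, and hence that its \htlike. Stability is already granted by the arguments of \cite{665071}, so I focus only on the moment bound $\ex[\norm{\bar\q_{\perp\calk}}^r]\le C_r$ with $C_r$ independent of $\epsilon$. The guiding observation is that $\Pi_3$ behaves like the approximate-MaxWeight class $\Pi_2$: property (i) forces the chosen schedule to coincide with a MaxWeight matching frequently, while the comparison property (ii) prevents it from deteriorating in between. Concretely, $\Pi_3$ is a version of $\Pi_2$ in which the fixed deficit $W_2$ of Definition~\ref{Def:class_2} is replaced by a \emph{state-dependent} deficit, which I will show has all moments bounded uniformly in $\epsilon$. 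The proof therefore splits into (a) controlling this deficit, and (b) re-running the collapse drift of Theorem~\ref{thm: ssc_class_2} with the deficit treated as a bounded-moment perturbation.

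For step (a), define the schedule deficit $\Phi(t)\triangleq\max_{\s}\langle\q(t),\s\rangle-\langle\q(t),\s(t)\rangle\ge 0$, which by property (iii) is a deterministic function of $X(t)$. I would establish two facts. First, by property (i), conditioned on $X(t)$ the next schedule $\s(t+1)$ is a MaxWeight matching for $\q(t+1)$ with probability at least $\delta$, in which case $\Phi(t+1)=0$. Second, using the comparison inequality $\langle\q(t+1),\s(t+1)\rangle\ge\langle\q(t+1),\s(t)\rangle$ from property (ii), together with the bounded arrivals (Assumption~\ref{assu: arrival_process}(iii)) and the fact that every schedule is a permutation, the deficit can grow by at most a constant $C=n(a_{\max}+2)$ in a single slot, i.e. $\Phi(t+1)\le\Phi(t)+C$. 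Combining the two, $\Phi(t)\le C\,N(t)$ where $N(t)$ is the number of slots since the last reset, and since each slot resets $\Phi$ with conditional probability at least $\delta$ we have $\pr(N(t)\ge k)\le(1-\delta)^k$. Hence $\bar\Phi$ has an exponential tail and every moment $\ex[\bar\Phi^{\,r}]$ is bounded by a constant independent of $\epsilon$ (note $C$ and $\delta$ do not depend on $\epsilon$).

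For step (b), I would feed the deficit into the collapse drift. Because MaxWeight lies in $\Pi_1(\boldsymbol\nu)$ with $W_1=\nu_{\min}$, we have $\max_{\s}\langle\q,\s\rangle\ge\langle\q,\boldsymbol\nu\rangle+\nu_{\min}\norm{\q_{\perp\calk}}$, so the schedule actually used satisfies $\langle\q(t),\s(t)\rangle\ge\langle\q(t),\boldsymbol\nu\rangle+\nu_{\min}\norm{\q(t)_{\perp\calk}}-\Phi(t)$, which is precisely the $\Pi_1$ weight inequality weakened by $\Phi(t)$. Repeating the drift computation of Theorem~\ref{thm: ssc_class_1}/\ref{thm: ssc_class_2} with this inequality, the deficit enters only as an additive perturbation, giving $\ex[\Delta\norm{\q_{\perp\calk}}^2\mid X]\le-\theta\norm{\q_{\perp\calk}}+c\,\Phi(t)+K$ with $\theta,c,K$ independent of $\epsilon$. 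Setting the steady-state drift to zero and invoking the moment bound on $\bar\Phi$ from step (a) bounds $\ex[\norm{\bar\q_{\perp\calk}}]$; bootstrapping with the higher-power Lyapunov functions $\norm{\q_{\perp\calk}}^{r+1}$ and absorbing the mixed $\norm{\q_{\perp\calk}}^{j}\Phi^{k}$ terms by H\"older's inequality (using the bounded moments of $\bar\Phi$ and the induction hypothesis on lower moments) bounds every moment uniformly in $\epsilon$, which is exactly SSC. Equivalently, one may run the single combined Lyapunov function $Z=\norm{\q_{\perp\calk}}+\beta\Phi$ and verify condition~C.1 for it directly, choosing $\beta$ large enough that the $-\beta\delta\Phi$ drift of $\Phi$ dominates the $+c\Phi$ term.

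The main obstacle is exactly the memory carried in the state: because the schedule is a component of $X(t)$, a \emph{stale} schedule yields a state whose one-step drift of $\norm{\q_{\perp\calk}}$ need not be negative, so unlike $\Pi_1$ and $\Pi_2$ there is no single-slot collapse drift and the $\norm{\q_{\perp\calk}}$ template of Theorem~\ref{thm: ssc_class_1} cannot be applied verbatim. The entire difficulty is quarantined into $\Phi$, where the delicate point is that $\Phi$ has \emph{unbounded downward jumps} — a reset from a large value to $0$ — so its increments violate condition~C.2; one must therefore argue through the geometric/exponential control of step (a) (or, in the combined-function route, note that these downward resets only aid the drift and so only the bounded upward increments need be controlled). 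Finally, it is essential that properties (i) and (ii) are used \emph{in tandem}: (i) alone fixes the reset frequency but not the growth between resets, and (ii) alone guarantees monotone improvement but never drives the deficit to zero; only together do they render $\Phi$ a bounded-moment perturbation of the $\Pi_2$ analysis.
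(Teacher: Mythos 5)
Your proposal is correct in its essential ideas but takes a genuinely different route from the paper's. The paper samples the chain at the stopping times $T_k$ at which the chosen schedule coincides with a MaxWeight matching, shows via Lemma \ref{prop:tau_dominate} that the gaps $\tau_k=T_{k+1}-T_k$ are stochastically dominated by a Geometric($\delta$) variable, runs the drift of $\norm{\q_{\perp \calk}}$ on the sampled chain $Y_k=X(T_k)$ --- verifying C.1 and C.3, the latter because the increment over one interval is at most $na_{\max}\tau_k$ --- and then interpolates back to all $t$ via $\norm{\q_{\perp\calk}(t)}\le\norm{\q_{\perp\calk}(T_k)}+na_{\max}\tau_k$. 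The price of that route is a delicate multi-slot expansion of $\norm{\q(T_{k+1})}^2$ with Wald-type arguments for the arrival and unused-service cross terms. You instead stay with the one-slot chain and quarantine all of the memory into the deficit $\Phi(t)$, whose uniform-in-$\epsilon$ exponential tail you extract from exactly the same geometric reset structure that underlies Lemma \ref{prop:tau_dominate}; the collapse drift then becomes a perturbation of the $\Pi_2$ computation. Both proofs use properties (i) and (ii) in the same tandem way; yours trades the multi-step bookkeeping for a perturbation-plus-bootstrapping argument, and your bounds $\Phi(t+1)\le\Phi(t)+n(a_{\max}+1)$ and $\pr(N(t)\ge k)\le(1-\delta)^k$ are verifiably correct.

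Two points need care before your version closes. First, the combined-function route cannot invoke Lemma \ref{lemma: moment_bound} as stated: $Z=\norm{\q_{\perp\calk}}+\beta\Phi$ has downward jumps of size $\Phi$, unbounded uniformly over states, so neither C.2 nor C.3 (both two-sided) holds; you correctly note that only upward increments should matter, but you must then supply or cite a one-sided Hajek-type bound. Moreover, in the drift of the \emph{unsquared} $\norm{\q_{\perp\calk}}$ the perturbation enters as $\Phi/\norm{\q_{\perp\calk}}$ rather than $c\,\Phi$, so verifying C.1 for $Z$ requires a two-case split ($\norm{\q_{\perp\calk}}$ large versus $\Phi$ large) and a $\beta$ that is \emph{small} enough that $\beta\, n(a_{\max}+1)<\nu_{\min}/4$, with $\kappa$ taken large --- not ``$\beta$ large enough.'' Second, the ``set the stationary drift of $\norm{\q_{\perp\calk}}^{r+1}$ to zero and apply H\"older'' route presupposes $\ex[\norm{\bar\q_{\perp\calk}}^{r+1}]<\infty$ for each fixed $\epsilon$; since the one-step drift of $\norm{\q}^2$ under $\Pi_3$ carries the extra $+2\Phi$ term and $\Phi$ can be of order $\langle\q,\mathbf 1\rangle$, this finiteness does not follow from the single-slot Foster--Lyapunov argument and must be imported from the multi-step stability analysis of \cite{665071} or handled by truncation. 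Both repairs are standard, so the approach goes through.
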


Theorem \ref{thm:tassiulas_ssc} shows that the class of scheduling algorithms presented in \cite{665071} achieves SSC and have the same heavy traffic scaled queue length as MaxWeight. Therefore, the same result holds for the algorithms presented in \cite{shah2002efficient}. 

Let $\{T_k\}_{k\geq 0}$ be the sequence time instants at which the chosen schedule matches with the MaxWeight schedule, i.e.,
\begin{equation*}
	\langle \q(T_k), \s(T_k) \rangle = \max_{\s} \langle \q(T_k), \s \rangle. 
\end{equation*}
As $(\q(t), \s(t)) = f(X(t))$, it follows that $\{T_k\}_{k\geq 0}$ form a sequence of stopping times for the Markov chain $X(t)$. We define another process $\{Y_k\}_{k\geq 0}$ such that $Y_k = X(T_k)$. As $\{T_k\}_{k\geq 0}$ are stopping times, by strong Markov property, $Y_k$ forms a Markov chain. Also, for $Y_k = Y$, take $(\q,\s) = f(Y)$, and by the construction of  $\{Y_k\}_{k\geq 0}$, $\s$ is the MaxWeight schedule corresponding to $\q$. 

\begin{lemma}
\label{prop:tau_dominate}
	 For any $k\geq 1$, let $\tau_k = T_{k+1} - T_{k}$. Then, for any $Y$, the random variable $\{\tau_k|Y_k = Y\}$ is stochastically dominated by a random variable $M$ which is Geometrically distributed with mean $1/ \delta$.
\end{lemma}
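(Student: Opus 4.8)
The plan is to prove the stochastic dominance by bounding the upper tail of $\tau_k$ directly. If $M$ is Geometric with mean $1/\delta$ (supported on $\{1,2,\dots\}$), then $\pr(M > j) = (1-\delta)^{j}$ for every integer $j \geq 0$. Since $\tau_k \geq 1$ always, it therefore suffices to establish that for every state $Y$ and every integer $j \geq 0$,
\begin{equation*}
	\pr\big(\tau_k > j \,\big|\, Y_k = Y\big) \leq (1-\delta)^{j},
\end{equation*}
which is exactly the statement that $\{\tau_k \mid Y_k = Y\}$ is stochastically dominated by $M$.

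First I would rewrite the event in terms of the scheduling process. Let $N(t) = \cali\big(\langle \q(t),\s(t)\rangle = \max_{\s}\langle \q(t),\s\rangle\big)$ be the indicator that the schedule chosen at time $t$ is a MaxWeight schedule. Because $T_{k+1}$ is by definition the first slot strictly after $T_k$ at which a MaxWeight schedule is picked, the event $\{\tau_k > j\}$ is precisely $\{N(T_k+1) = \cdots = N(T_k+j) = 0\}$, i.e., no match occurs in the $j$ slots immediately following $T_k$.

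The heart of the argument is a telescoping estimate that removes one slot at a time using the per-slot lower bound (i) of Definition \ref{Def: class_3}. Because $T_k$ is a random (stopping) time whereas Eq. \eqref{eq: delta} holds at deterministic times, I would first fix $s$ and work with the joint probability of $\{\tau_k > j,\, T_k = s,\, X(s) = Y\}$. The event $\{T_k = s,\, X(s) = Y\} \cap \{N(s+1) = \cdots = N(s+j-1) = 0\}$ is measurable with respect to $\sigma(\mathcal H_{s+j})$, so applying the tower property and then Eq. \eqref{eq: delta}, which gives $\pr(N(s+j) = 0 \mid \sigma(\mathcal H_{s+j})) \leq 1-\delta$, peels off the last slot at the cost of a factor $(1-\delta)$. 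Iterating this $j$ times yields
\begin{equation*}
	\pr\big(\tau_k > j,\, T_k = s,\, X(s) = Y\big) \leq (1-\delta)^{j}\, \pr\big(T_k = s,\, X(s) = Y\big).
\end{equation*}
Summing over all $s$ and dividing by $\pr(Y_k = Y)$ then gives the required tail bound, since on $\{T_k = s\}$ one has $Y_k = X(s)$.

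I expect the only delicate point to be the transfer of the per-slot bound from deterministic times to the random anchor $T_k$; this is exactly what the decomposition over $\{T_k = s\}$ above resolves, using that each $\{T_k = s\}$ is determined by the history up to time $s$ and hence lies in $\sigma(\mathcal H_{s+j})$. The strong Markov property, already invoked to define the chain $Y_k$, guarantees that these conditional objects are well posed, and the remaining manipulations are routine.
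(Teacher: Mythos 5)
Your proposal is correct and follows essentially the same route as the paper's proof in Appendix I: both reduce the claim to the tail bound $\pr(\tau_k > j \mid Y_k = Y) \leq (1-\delta)^j$, decompose over the value of the stopping time $T_k$ so that the per-slot bound of Eq.~\eqref{eq: delta} can be applied at a deterministic time via the tower property with respect to $\sigma(\mathcal H_{T_k + j})$, and then peel off one factor of $(1-\delta)$ per slot by iteration. The only cosmetic difference is that you telescope on joint probabilities while the paper telescopes on the conditional probabilities $\pr(\tau_k > c \mid \tau_k > c-1, Y_k = Y)$; the underlying argument is identical.
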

The proof of Lemma \ref{prop:tau_dominate} follows from part (i) of Definition \ref{Def: class_3}. The idea is that in any time slot, there is at least $\delta$ probability that the MaxWeight schedule is picked. So, we can bound the probability $\pr( \tau_k > c | Y_k = Y)$ with $\pr(M>c)$. The proof of  Lemma \ref{prop:tau_dominate} is provided in Appendix I\begin{Ton-submit} (in the supplement file)\end{Ton-submit}.
 Next, we provide the proof sketch for Theorem \ref{thm:tassiulas_ssc}.
 
 \cblue

    The idea behind the proof of Theorem \ref{thm:tassiulas_ssc} is that the scheduling algorithms lying in $\Pi_3$ chooses the MaxWeight schedule frequently. In fact, Lemma \ref{prop:tau_dominate} shows that the time difference between choosing the two MaxWeight schedules is stochastically dominated by a geometrically distributed random variable. 
    
    Once the MaxWeight schedule is choosen, the weight of the schedule choosen by the scheduling algorithm in subsequent time slots is not much worse than the weight of the MaxWeight schedule in those time slot. This happens because the queue lengths cannot deviate too much in a single time slot (as arrivals are bounded) and the algorithm tries to improve upon the schedule used in previous time slot. 
    
    Mathematically, we use the the Lyapunov drift argument on the Markov chain $\{Y_k\}_{k\geq 0}$. We show that for $\{Y_k\}_{k\geq 0}$, if we choose the Lyapunov function to be $\norm{\q_{\perp \calk}}$, then this Lyapunov function satisfies the condition C.1 and C.3. The detailed proof of Theorem \ref{thm:tassiulas_ssc} is provided in Appendix J\begin{Ton-submit} (in the supplement file)\end{Ton-submit}.
\color{black}

\cblue
\section{Comparison of algorithms}

So far, we studied the performance of three classes of algorithms, and saw that they all have MaxWeight like heavy-traffic performance. In this section, we present a comparison and contrast them and Table \ref{tab:summary} presents a summary. 
The three classes are clearly not disjoint. For instance, as mentioned before, MaxWeight lies in all three classes. It is not hard to construct other algorithms that lie in all three classes. one can combine multiple scheduling algorithms to generate a new algorithm. 
For example, one can create a scheduling algorithm that generate two schedules, one by bursty MaxWeight and other by power-of-$d$ and then chooses the one with larger weight. Such an algorithm will lie in both class $\Pi_1(\frac{1}{n} \mathbf{1})$ and $\Pi_2$. Another example is PC-$d$ (Section \ref{sec: randomly_delayed_maxweight}), which was created by implementing an extra comparison step after doing power-of-$d$. So PC-$d$ lie in both $\Pi_{1}(\frac{1}{n} \mathbf{1})$ and $\Pi_3$. A similar modification  can be done with random $d$-flip without changing the complexity of the algorithm. 

While we proved that they all have MaxWeight-like heavy-traffic mean delay performance, their performance under other metrics can be different. 
First consider throughput optimality. As mentioned before, while 
the algorithms in class $\Pi_2$ and class $\Pi_3$ are throughput optimal, 
the algorithms in $\Pi_1(\boldsymbol \nu)$ are not. 
Power-of-$d$ and random $d$-flip are known to be stable only in a subset of the capacity region $\mathcal{C}^*$. 
In particular, they support maximum possible load only when the traffic is uniform. There are known examples \cite[Theorem 1]{giaccone2003randomized} showing how they are unstable under non-uniform load for certain arrival rate vectors within the capacity region. 
In the next subsection, we present the  large scale heavy traffic regime, which is yet another asymptotic performance view, that enables us to distinguish between the performance of the algorithms presented so far. 

\subsection{Large scale heavy traffic regime}
\label{sec: large_scale}
\cblue
Stochastic networks such as an input queued switch are in general hard to analyze and so, are usually studied in various asymptotic regimes with heavy-traffic being a prominent one that is the main focus of this paper. In heavy-traffic regime, we fix the size of the switch $n$, and load it to its maximum capacity, i.e., we let the heavy-traffic parameter $\epsilon \rightarrow 0$. Another popular regime is the large scale limit, where the load is fixed ($\epsilon$ is fixed) and the size of the system, $n$ is sent to infinity \cite{neely2007logarithmic}. Different regimes present different view points of the system, and obtaining results in various regimes presents a more holistic view. For example, several algorithms that have the same performance in one regime may have different performance in another regime. 

In this section, we consider a whole spectrum of asymptotic regimes between the large scale regime and the heavy-traffic regime, where the size of the switch simultaneously grows to infinity as the arrival rate approaches the boundary of the capacity region. These are called the \textit{large scale heavy traffic regime}. These regimes are of special interest today, since the size of today's data center networks is huge.  
\color{black}
For a given $n$, let the $\epsilon(n)$ denote the heavy traffic parameter of the system such that $\epsilon(n)$ is $\Omega(n^{-\beta})$ for some $\beta >0$.
Note that the large-scale regime corresponds to the case when  $\beta =0$, and the heavy-traffic regime can be thought of as the case when $\beta = \infty$.
Also, for simplicity, we assume that the arrival process is uniform Bernoulli traffic in this section. This means that $\boldsymbol \lambda = \frac{(1-\epsilon(n))}{n} \mathbf 1$ and $$\norm{\mathbf{\boldsymbol \sigma}}^2 = (1-\epsilon(n))(n-1 + \epsilon(n) ) = n-1 + o(n).$$ 

\cblue
In this case, from the universal lower bound given in Eq. \eqref{eq: universal_lower_bound_no_limit}, we know that if $\epsilon(n) = \Omega(n^{-\beta})$ then, $\mathbb{E}  \big [ \sum_{ij} \Bar{q}_{ij} \big] $ is $\Omega(n^{1+\beta})$ for any value of $\beta>0$. Therefore, a natural question is if there is an algorithm under which,  we can also obtain an upper bound that is $\Omega(n^{1+\beta})$ for all $\beta>0$. At this point, while this is still an open question \cite{xu2019improved}, it is known  \cite[Corollary 1]{doi:10.1287/15-SSY193} that under MaxWeight algorithm, $\mathbb{E}  \big [ \sum_{ij} \Bar{q}_{ij} \big] $ is $\Omega(n^{1+\beta})$ for  $\beta> 4$. More precisely, for $\beta> 4$, under MaxWeight algorithm,
\begin{equation}
		\label{eq: beta_result}
		\lim_{n\rightarrow \infty} \frac{\epsilon(n)}{n} \mathbb{E}  \Big [ \sum_{ij} \Bar{q}_{ij} \Big] = 1.
	\end{equation}

The following theorem states similar results for the other algorithms studied so far. 
\begin{theorem}
\label{prop: beta_algos}
    Consider a switch system under uniform Bernoulli traffic such that $\lambda = \frac{(1-\epsilon(n))}{n} \mathbf 1$, and $\epsilon(n)$ is $\Omega(n^{-\beta})$. Then, we have Eq. \eqref{eq: beta_result}
     \begin{enumerate}
        \item[(i)] under algorithms in class $\Pi_1(\frac{1}{n} \mathbf{1})$ for $\beta > 3+\alpha_1$ where $W_1 = O(n^{-\alpha_1})$. In particular, power-of-$d$ and random $d$-flip algorithms satisfy Eq. \eqref{eq: beta_result} for $\beta>6$.
        \item[(ii)] under algorithms in class $\Pi_2$ for $\beta > 2 + \max\{\alpha_2, 2\}$ where $W_2 = O(n^{\alpha_2})$. In particular, bursty MaxWeight and pipelined MaxWeight algorithms satisfy Eq. \eqref{eq: beta_result}  for $\beta>3 + \max\{\gamma,1\}$, where $m$ is $O(n^\gamma)$.
    \end{enumerate}
\end{theorem}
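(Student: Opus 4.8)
The plan is to derive the large-scale limit \eqref{eq: beta_result} from the same steady-state drift identity that underlies the fixed-$n$ result of Theorem \ref{prop: ssc} (cf. \cite{doi:10.1287/15-SSY193}), but now keeping track of how every constant depends on $n$ and sending $\epsilon=\epsilon(n)$ and $n$ to their limits simultaneously. The key structural observation is that the target constant is $1$, whereas the universal lower bound \eqref{eq: universal_lower_bound_no_limit} only delivers $\tfrac12$; hence a matching upper and lower bound are both needed. Both will come from a single source: the drift of the Lyapunov function $\norm{\q_{\|}}^2$ (projection onto $\mathcal S$), which in steady state is an exact equality and therefore sandwiches the quantity of interest from both sides, bypassing the loose bound \eqref{eq: universal_lower_bound_no_limit}.

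First I would set $\ex[\Delta\norm{\q_\|}^2]=0$ in steady state. Writing $\boldsymbol\zeta\triangleq\ar-\s$ and using the two facts special to uniform traffic, namely that every permutation projects to $\s_{\|}=\boldsymbol\nu=\tfrac1n\mathbf 1$ and that complementary slackness gives $\langle\q(t+1),\mathbf u(t)\rangle=0$, the cross terms collapse and one is left with an identity of the form
\begin{equation*}
2\epsilon\,\ex[\langle\bar{\q},\boldsymbol\nu\rangle]=\ex\!\big[\norm{\bar{\boldsymbol\zeta}_{\|}}^2\big]-\ex\!\big[\norm{\bar{\mathbf u}_{\|}}^2\big]-2\,\ex\!\big[\langle\bar{\q}_{\perp},\bar{\mathbf u}\rangle\big],
\end{equation*}
where $\bar{\q}_{\perp}$ denotes the stationary projection of the post-update state onto $\mathcal S^{\perp}$. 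Since $\langle\bar{\q},\boldsymbol\nu\rangle=\tfrac1n\ex[\sum_{ij}\bar q_{ij}]$, this isolates exactly $\tfrac{\epsilon}{n}\ex[\sum_{ij}\bar q_{ij}]=\tfrac12\ex[\norm{\bar{\boldsymbol\zeta}_{\|}}^2]+(\text{error})$. A direct computation of the main term, using that $\s_{\|}$ is the deterministic matrix $\boldsymbol\nu$ so only the arrival variance survives, gives $\ex[\norm{\bar{\boldsymbol\zeta}_{\|}}^2]=(2n-1)\sigma^2+\epsilon^2\to 2$ under uniform Bernoulli traffic ($\sigma^2$ being the common per-coordinate variance). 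Thus the main term alone produces the claimed limit $1$, and everything reduces to showing the error terms vanish.

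The error is driven by the unused service $\bar{\mathbf u}$ and the perpendicular deviation $\bar{\q}_{\perp\calk}$. Flow balance in steady state gives $\ex[\sum_{ij}\bar u_{ij}]=n\epsilon$, so $\ex[\norm{\bar{\mathbf u}}^2]\le n\epsilon$; for the factors involving the perpendicular component I would re-run the SSC drift argument behind Theorem \ref{thm: ssc_class_1} (resp. Theorem \ref{thm: ssc_class_2}), keeping all constants explicit in $n$. For class $\Pi_1(\tfrac1n\mathbf 1)$ the negative drift rate of $\norm{\q_{\perp\calk}}$ is $\Theta(W_1)$ with a per-slot norm increment of $O(\sqrt n)$, and Lemma \ref{lemma: expectation_bound} gives $W_1=\tfrac{1}{2n^3}$, i.e. $\alpha_1=3$; for class $\Pi_2$ the drift rate is $\Theta(\nu_{\min})=\Theta(1/n)$ but with an additive slack $W_2$, which pushes the effective threshold radius to $\Theta(nW_2)$. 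Either way this yields moment bounds $\ex[\norm{\bar{\q}_{\perp\calk}}^r]\le C_r(n)$ with explicit polynomial growth, scaling like a power of $n/W_1$ (resp. $\max\{n^2,nW_2\}$). Bounding $\norm{\bar{\q}_{\perp}}\le\norm{\bar{\q}_{\perp\calk}}$ and combining with $\ex[\norm{\bar{\mathbf u}}^2]\le n\epsilon$ and $\epsilon=\Omega(n^{-\beta})$, each error term becomes a power of $n$ times a power of $\epsilon$, and elementary bookkeeping shows it is $o(1)$ precisely when $\beta$ exceeds $3+\alpha_1$ (resp. $2+\max\{\alpha_2,2\}$). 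Substituting $\alpha_1=3$ recovers $\beta>6$ for power-of-$d$ and random $d$-flip, and $W_2=2mn\,a_{\max}=O(n^{1+\gamma})$ (so $\alpha_2=1+\gamma$) recovers $\beta>2+\max\{1+\gamma,2\}=3+\max\{\gamma,1\}$ for bursty and pipelined MaxWeight.

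The hard part will be the last error term, $\ex[\langle\bar{\q}_{\perp},\bar{\mathbf u}\rangle]$. A crude Cauchy--Schwarz split into $\sqrt{\ex[\norm{\bar{\q}_{\perp\calk}}^2]}\,\sqrt{\ex[\norm{\bar{\mathbf u}}^2]}$ loses a full factor of $n^{\alpha_1}$ and would only yield a threshold like $\beta>3+2\alpha_1$; to reach the sharp $3+\alpha_1$ one must exploit complementary slackness, namely that $\bar u_{ij}>0$ forces $\bar q_{ij}(t+1)=0$, so unused service is concentrated at near-empty coordinates and its inner product with the perpendicular deviation is far smaller than the product of the norms. Making this quantitative, and simultaneously balancing the $n$-growth of $C_r(n)$ against the decay of $\epsilon$ and the $n\epsilon$ smallness of the unused service, is where essentially all the work lies. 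Once the error is shown to be $o(1)$, the exact identity gives both the upper and the lower bound at once, and \eqref{eq: beta_result} follows for both classes from the main-term computation above.
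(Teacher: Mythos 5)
Your overall architecture matches the paper's: obtain SSC moment bounds $\ex[\norm{\bar\q_{\perp\calk}}^r]\le C_r(n)$ with explicit polynomial dependence on $n$ by re-running the drift arguments of Theorems \ref{thm: ssc_class_1} and \ref{thm: ssc_class_2} with the constants $\kappa=2K/W_1$ (resp. $2K'/\nu_{\min}$), $\eta=W_1/4$ (resp. $\nu_{\min}/4$), $D=na_{\max}$ made explicit, then feed these into a quantitative version of the heavy-traffic identity. Your bookkeeping of $W_1$, $W_2$, and the resulting thresholds is consistent with the paper's. The difference is that the paper does not re-derive the steady-state drift identity: it invokes the explicit two-sided bound of \cite[Theorem 2]{doi:10.1287/15-SSY193}, in which every error term is already packaged as $n^{2-1/r}\epsilon^{-1/r}C_r^{1/r}$, and the only remaining work (Lemma \ref{thm: beta_result}) is to check that $2+\alpha+(\beta-1)/r<1+\beta$ for $r$ large, i.e.\ $\beta>1+\alpha$ with $C_r=O(n^{r\alpha})$.

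The genuine gap in your plan is exactly the step you flag as ``where essentially all the work lies'': the cross term $\ex[\langle\bar\q_{\perp},\bar{\mathbf u}\rangle]$. You correctly observe that Cauchy--Schwarz with second moments gives only $\sqrt{C_2}\sqrt{n\epsilon}=O(n^{\alpha+(1-\beta)/2})$, which forces $\beta>1+2\alpha$ and loses a factor of two in the exponent. But the fix is not a quantitative complementary-slackness argument about unused service concentrating at near-empty coordinates; no such refinement appears in the paper or in \cite{doi:10.1287/15-SSY193}. The actual mechanism is H\"older's inequality with a large exponent $r$, combined with the $\ell^1$ (not $\ell^2$) control of the unused service: since $u_{ij}\in\{0,1\}$ one has $|\langle\q_{\perp},\mathbf u\rangle|\le\norm{\q_{\perp}}\sum_{ij}u_{ij}$ with $\sum_{ij}u_{ij}\le n$ deterministically and $\ex[\sum_{ij}\bar u_{ij}]=n\epsilon$ in steady state, so that $\ex[\norm{\bar\q_{\perp}}\sum_{ij}\bar u_{ij}]\le C_r^{1/r}\,n^{1/r}(n\epsilon)^{(r-1)/r}$, which tends to $O(n^{1+\alpha-\beta})$ as $r\to\infty$ and is $o(1)$ precisely when $\beta>1+\alpha$. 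This is why the hypothesis of Lemma \ref{thm: beta_result} requires the \emph{entire family} $C_r=O(n^{\alpha r})$ and not just $C_2$. Without this interpolation step (or an equivalent), your derivation does not reach the claimed thresholds $\beta>3+\alpha_1$ and $\beta>2+\max\{\alpha_2,2\}$; with it, your identity-based route becomes a re-proof of \cite[Theorem 2]{doi:10.1287/15-SSY193} and the rest of your argument goes through.
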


The proof of the theorem is presented in Appendix K\begin{Ton-submit} (in the supplement file)\end{Ton-submit}.
The main tool to prove results of the form Eq. \eqref{eq: beta_result} in general, and the above theorem in particular is the following result that exploits a finer handle on the state space collapse, which follows from  \cite[Corollary 1]{doi:10.1287/15-SSY193}. The 
details are also provided in Appendix L\begin{Ton-submit} (in the supplement file)\end{Ton-submit}.



\begin{lemma}
\label{thm: beta_result}
	Consider a switch system under uniform Bernoulli traffic such that $\lambda = \frac{(1-\epsilon(n))}{n} \mathbf 1$, and $\epsilon(n)$ is $\Omega(n^{-\beta})$. Suppose the scheduling algorithm satisfy state space collapse with,
	\begin{equation*}
		\mathbb{E} \Big [\norm{\Bar{\q}_{\perp \mathcal{K}}}^r \Big] \leq C_r \quad \forall r \in \{1,2,\dots\},
	\end{equation*}
	such that $C_r$ is $O(n^{\alpha r})$ for all $r \in \{1,2,\dots\}$, then Eq. \eqref{eq: beta_result} holds for all $\beta > \alpha +1$.
\end{lemma}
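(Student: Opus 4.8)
The plan is to adapt the fixed-size heavy-traffic computation behind Theorem~\ref{prop: ssc} (that is, \cite[Corollary~1]{doi:10.1287/15-SSY193}) into a \emph{non-asymptotic} identity and then track the $n$-dependence of every residual. Since the traffic is uniform with $\boldsymbol\nu=\frac{1}{n}\mathbf 1$, the target quantity rewrites as
\begin{equation*}
\frac{\epsilon(n)}{n}\,\ex\Big[\sum_{ij}\bar q_{ij}\Big]=\epsilon(n)\,\ex[\langle\bar\q,\boldsymbol\nu\rangle],
\end{equation*}
so it suffices to show the right-hand side tends to $1$. I would first record the exact one-step relations the drift method supplies in steady state: the linear drift of $\langle\q,\mathbf 1\rangle$ together with $u\su\in\{0,1\}$ gives $\ex[\norm{\mathbf u}^2]=n\epsilon$, and a direct second-moment computation gives $\ex[\norm{\bar\ar-\bar\s}^2]=2(n-1)-\epsilon(n-2)$. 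These feed the drift of an appropriate cone-projected quadratic Lyapunov function which, after using complementary slackness $\langle\bar\q(t+1),\mathbf u\rangle=0$ and the fact that $\langle\q_{\|\calk},\s\rangle$ equals the \emph{same} constant $\frac{1}{n}\langle\q_{\|\calk},\mathbf 1\rangle$ for every permutation $\s$, yields a decomposition
\begin{equation*}
\frac{\epsilon(n)}{n}\,\ex\Big[\sum_{ij}\bar q_{ij}\Big]=\frac{1}{n}\Big(1-\frac{1}{2n}\Big)\norm{\boldsymbol\sigma}^2+\Phi(n,\epsilon),
\end{equation*}
where $\Phi$ is a finite sum of cross terms, each a product of a power of $\epsilon$, a power of $n$, and an expectation of the form $\ex[\norm{\bar\q_{\perp\calk}}^{r}]$.

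Next I would dispatch the main term. With $\norm{\boldsymbol\sigma}^2=(1-\epsilon)(n-1+\epsilon)$ for uniform Bernoulli traffic, it equals $\big(1-\frac{1}{2n}\big)\big(1-\frac{1}{n}+\frac{\epsilon}{n}\big)(1-\epsilon)$, which converges to $1$ as $n\to\infty$ provided $\epsilon(n)\to0$; this is precisely the $n\to\infty$ value of the limit in Theorem~\ref{prop: ssc}.

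The crux is to show $\Phi(n,\epsilon(n))\to0$. Here I would substitute the hypothesis $\ex[\norm{\bar\q_{\perp\calk}}^{r}]\le C_r=O(n^{\alpha r})$ into each cross term. The dominant residual comes from the coupling between the large component $\q_{\|\calk}$ and the collapsed component $\q_{\perp\calk}$; because the permutation and arrival matrices have Frobenius norm $O(\sqrt n)$ and $\mathbf 1$ has norm $n$, these couplings enter \emph{linearly} in $\q_{\perp\calk}$ via Cauchy--Schwarz, so the leading term is of order $\epsilon\,n^{1+\alpha}$ (equivalently $\frac{\epsilon}{n}\cdot n^{2}\,\ex[\norm{\bar\q_{\perp\calk}}]$), while every remaining term carries an extra factor of $\epsilon$ or $1/n$ and is therefore lower order. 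Since $\epsilon(n)$ is of order $n^{-\beta}$, the leading residual is $O(n^{1+\alpha-\beta})$, which is $o(1)$ exactly when $\beta>\alpha+1$, giving Eq.~\eqref{eq: beta_result}.

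The main obstacle is the bookkeeping in this last step: one must verify that \emph{every} residual term is at most linear in $\norm{\q_{\perp\calk}}$, so that only the first moment $C_1$ (and not $C_2$, which would force the stronger threshold $\beta>2\alpha-1$) governs the leading behaviour, and that the $n$-powers multiplying it are exactly $n^{1+\alpha}$ after the $\frac{\epsilon}{n}$ scaling. This is also where the assumption that \emph{all} moments of $\norm{\bar\q_{\perp\calk}}$ are controlled is used: the higher moments are needed both to bound the products arising from Cauchy--Schwarz and to secure the uniform integrability required to pass to the $n\to\infty$ limit.
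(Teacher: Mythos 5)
There is a genuine gap in the quantitative bookkeeping at the heart of your argument. The paper's proof simply invokes the non-asymptotic bounds of \cite[Theorem 2]{doi:10.1287/15-SSY193}, in which the residual (after subtracting the main term $\frac{1}{\epsilon}(n-\tfrac{3}{2}+\tfrac{1}{2n})$ from $\ex[\sum_{ij}\bar q_{ij}]$) is of the form $O\big(n^{2-\frac{1}{r}}\,\epsilon^{-\frac{1}{r}}\,C_r^{\frac{1}{r}}\big)$ for \emph{every} $r$, and the threshold $\beta>\alpha+1$ is obtained only by letting $r$ be large: after multiplying by $\epsilon/n$ this residual is $O\big(n^{(1-\beta)(1-\frac{1}{r})+\alpha}\big)$, which tends to the desired $O(n^{1+\alpha-\beta})$ only as $r\to\infty$. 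Your claim that every residual term is linear in $\norm{\q_{\perp\calk}}$, so that only $C_1$ governs the leading behaviour with a residual of order $\epsilon\, n^{1+\alpha}$, contradicts this structure: taking $r=1$ in the actual bound gives a residual of $O(n\,\epsilon^{-1}C_1)$, which after the $\epsilon/n$ scaling is $O(n^{\alpha})$ and does not vanish for any $\beta$. The culprit is the cross term with the unused service, $\ex[\langle \bar\q_{\perp\calk},\bar{\mathbf u}\rangle]$: since $\mathbf u$ is small only in expectation ($\ex[\norm{\bar{\mathbf u}}^2]=n\epsilon$) and not pointwise, this term must be split by H\"older's inequality with a large exponent on the $\norm{\q_{\perp\calk}}$ factor, which is precisely why the hypothesis controls \emph{all} moments $C_r$ and why the factor $\epsilon^{-1/r}$ appears. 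The role of the higher moments is therefore the core quantitative estimate itself, not merely ``uniform integrability'' (indeed no limit interchange is needed, as the bounds are already non-asymptotic).

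Your overall strategy --- re-deriving the drift identity while tracking the $n$-dependence of each residual --- is in spirit exactly what \cite[Theorem 2]{doi:10.1287/15-SSY193} already provides, and the paper's proof is just the order-of-magnitude substitution $C_r=O(n^{\alpha r})$, $\epsilon(n)=\Theta(n^{-\beta})$ into that cited bound followed by checking that $2+\alpha+\frac{\beta-1}{r}<1+\beta$ holds for $r$ sufficiently large iff $\beta>\alpha+1$. To repair your write-up you would need to replace the ``only $C_1$ matters'' step by the H\"older argument with a free exponent $r$, carry the resulting $\epsilon^{-1/r}C_r^{1/r}$ factor through, and optimize over $r$ at the end.
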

\cblue

Note that for classical heavy-traffic results, we just need existence of SSC as in Definition \ref{def: ssc} that only cares about the existence of parameter $C_r$, and not about their dependence on the system size $n$. In contrast, here the quality of SSC, i.e., the exact dependence of $C_r$ on the system size $n$ plays a key role. The quality of such SSC is in turn influenced by the drift with which the algorithm pushes towards the cone. 
Among the algorithms studied so far, MaxWeight has the strongest drift towards the cone, and so we have from prior work \cite{doi:10.1287/15-SSY193} that Eq. \eqref{eq: beta_result} is valid for $\beta>4$. The class of algorithms studied in this paper have weaker drift towards the cone. While this distinction was not evident in the performance in the classical heavy-traffic regime, it becomes clearer in Theorem \ref{prop: beta_algos}.

More precisely, for Class 1 algorithms, the quality of drift towards the cone is determined by the constant $W_1$ in Eq. \eqref{eq: class1}. For MaxWeight, which also lies in Class 1, under uniform traffic, $W_1 = \nu_{\min} = 1/n$. For power-of-$d$ and and random $d$-flip  on the other hand, from Lemma \ref{lemma: expectation_bound}, we have that  $W_1 = \frac{1}{2n^3}$, which is worse by a factor of $O(1/n^2)$. This leads to  $\beta >6$ in Theorem \ref{prop: beta_algos} for these algorithms, which is 2 more than $\beta>4$ for MaxWeight. 

For Class 2 algorithms, the quality of drift towards the cone is determined by the constant $W_2$ in Eq. \eqref{eq: class_2}. For bursty MaxWeight and pipelined MaxWeight in this class, according to Definition \ref{Def:class_2} we have that $W_2 = 2nma_{\max}$ which is same as $O(n^{1+\gamma})$ when $m$ is $O(n^\gamma)$. On the other hand, for MaxWeight (which also lies in Class 2), $W_2=0$. It turns out that distinction leads to $\beta >3 + \max\{1,\gamma\}$ for pipelined and bursty MaxWeight algorithms. The details of the proof of Theorem \ref{prop: beta_algos} is provided in Appendix K\begin{Ton-submit} (in the supplement file)\end{Ton-submit}.

At this point, it is not clear if a result similar to Theorem \ref{prop: beta_algos} can be proved for Class 3 algorithms and that is an open question for further investigation. The key challenge is that the parameter $C_r$ in Lemma \ref{thm: beta_result} depends on  $1/\delta$, and  can be as large as $n!$. 
Another open question is investigating these algorithms for the values of $\beta$ not covered in Theorem \ref{prop: beta_algos}. It is known \cite{xu2019improved} to be a challenging open problem to even study the MaxWeight algorithm when $\beta \leq4$.

	
\color{black}



\cblue
\section{\texorpdfstring{$d$}{d}-flip: Empirical results}
\label{sec: simulations}

In this section, we present simulation results on an $O(d)$ algorithm named as $d$-flip. The algorithm $d$-flip differs from the random $d$-flip in the sense that $d$-flip do not generate a random schedule, it just uses the flip step $d$ times on the schedule used in previous time slot, i.e. it generates $\s(t)$ by applying $d$ flip steps on $\s(t-1)$. We only present empirical results related to $d$-Flip, as we cannot claim that  $d$-flip lies in any of the three class mentioned in this paper. The complexity of $d$-flip is $O(d)$ as it uses just $d$ flip step, each of complexity $O(1)$.
For simplicity, we will use the term \textit{Q-length} to denote $\ex \big[ \sum_{ij} \bar q_{ij} \big]$.  Also, the term \textit{Load} denotes the value $(1-\epsilon)$, where $\epsilon$ is the heavy traffic parameter. 

Fig. 1 shows the effect for increasing the value of $d$ in power-of-$d$, random $d$-flip and $d$-flip. The plot shows that increasing the $d$ does not have a huge effect on power-of-$d$, but it has significant effect on random $d$-flip and $d$-flip. For small values of $d$, power-of-$d$ perform better, while for large values of $d$, random $d$-flip and $d$-flip perform better. Intuitively,  the reason behind this is that for smaller values of $d$, power-of-$d$ has a higher probability of choosing a schedule with large weight as compared to random $d$-flip or $d$-flip, and this changes as the value of $d$ increases. For example, consider a schedule which can be converted to the MaxWeight schedule by a flip step. In this case, random $d$-flip or $d$-flip have $2/n(n-1)$ probability of choosing a flip step that gives MaxWeight schedule, while for power-of-$d$, the probability of sampling a MaxWeight schedule is $1/n!$.

\begin{figure}
\label{Fig: ton1}
\hspace{0.25em} \includegraphics[scale=0.55]{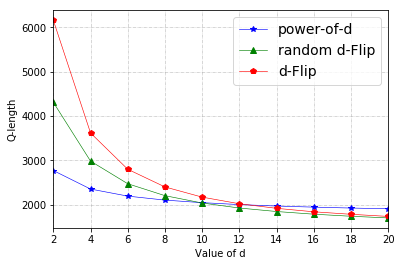}
	\caption{\textit{Q-length} vs \textit{Value of d} plot for power-of-$d$, random $d$-flip and $d$-flip for a $16 \times 16$ switch under uniform Bernoulli traffic with $Load =0.90$.}
\end{figure}

In Fig. 2, we again show the comparison of PC-$d$ and $d$-flip. In this plot, the arrival process is non-uniform. We know that PC-$d$ lies in the class $\Pi_3$, so it is throughput optimal and also heavy traffic optimal. Even though  $d$-flip does not lie in any of the classes mentioned in this paper, we can see that $d$-flip heavily outperforms PC-$d$.

\begin{figure}
	\label{Fig: ton7}
	\includegraphics[scale=0.55]{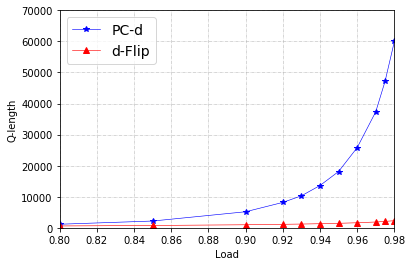}
	\caption{\textit{Q-length} vs \textit{Load} plot for PC-$d$ and $d$-flip with $d=8$ for a $16 \times 16$ switch under non-uniform Bernoulli traffic.}	
\end{figure}
\color{black}

\section{Future work}
\label{sec: conclusion}
In this section, we present a few future directions and open problems. One open problem is characterizing the exact stability region of power-of-$d$ or random $d$-flip. In this paper, we only looked at these algorithms under uniform traffic, or when the mean arrival rate lies in $\mathcal C^*$. But the stability region of these algorithms is larger than $\mathcal C^*$. Once the capacity region is understood, one can then study these algorithms under nonuniform traffic as long as the load is within their capacity region. 


While this paper studies a three different classes of low complexity algorithms, there are a few more algorithms that do not fall in any of the classes, and so are not analytically understood. The $d$-flip is one such algorithm, which is seen to perform well in simulations presented in Section \ref{sec: simulations}. Another example is iSLIP \cite{mckeown1999islip}, which commonly used in data centers, but the heavy traffic result for iSLIP is not known. 

Another future direction is the large scale analysis of algorithms in Class $\Pi_3$. Such an analysis will help us further differentiate between the algorithms in Class $\Pi_3$. Since simulations from Section \ref{sec: simulations} indicate that some algorithms such as PC-$d$, LAURA and SERENA from the class  $\Pi_3$ perform well, one expects that for these algorithms, a large scale heavy traffic regime result might be true.

\bibliographystyle{IEEEtran}
\bibliography{sample-base.bib}


\begin{IEEEbiography}[{\includegraphics[width=1in,height=1.25in,clip,keepaspectratio]{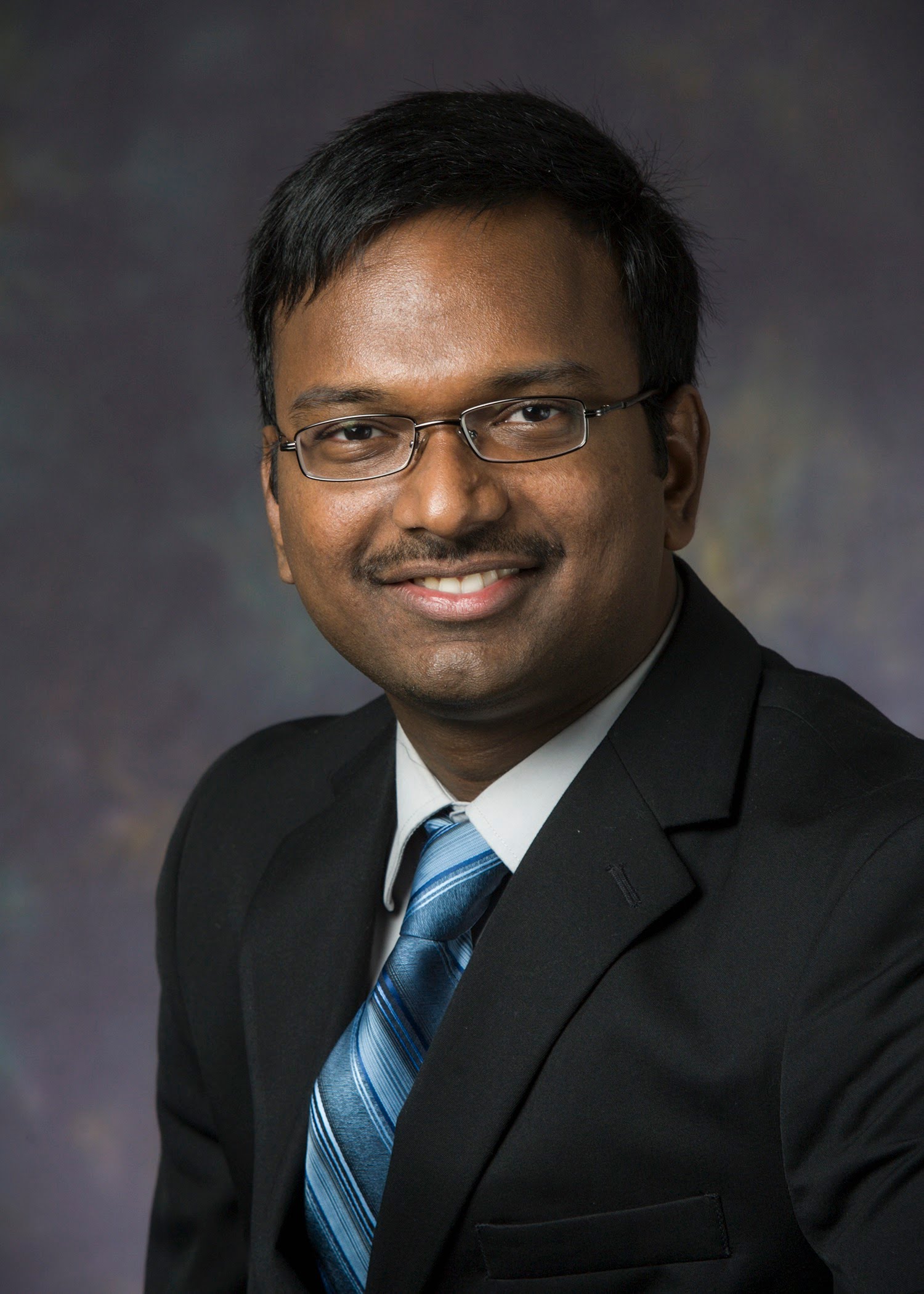}}]{Siva Theja Maguluri}
  is Fouts Family Early Career Professor and Assistant Professor in the School of Industrial and Systems Engineering at Georgia Tech. He obtained his Ph.D. and MS in ECE as well as MS in Applied Math from UIUC, and B.Tech in Electrical Engineering from IIT Madras. His research interests span the areas of Networks, Control, Optimization, Algorithms, Applied Probability and Reinforcement Learning. He is a recipient of the biennial “Best Publication in Applied Probability” award in 2017, “CTL/BP Junior Faculty Teaching Excellence Award” in 2020 and “Student Recognition of Excellence in Teaching: Class of 1934 CIOS Award” in 2020.
\end{IEEEbiography}

\begin{IEEEbiography}[{\includegraphics[width=1in,height=1.25in,clip,keepaspectratio]{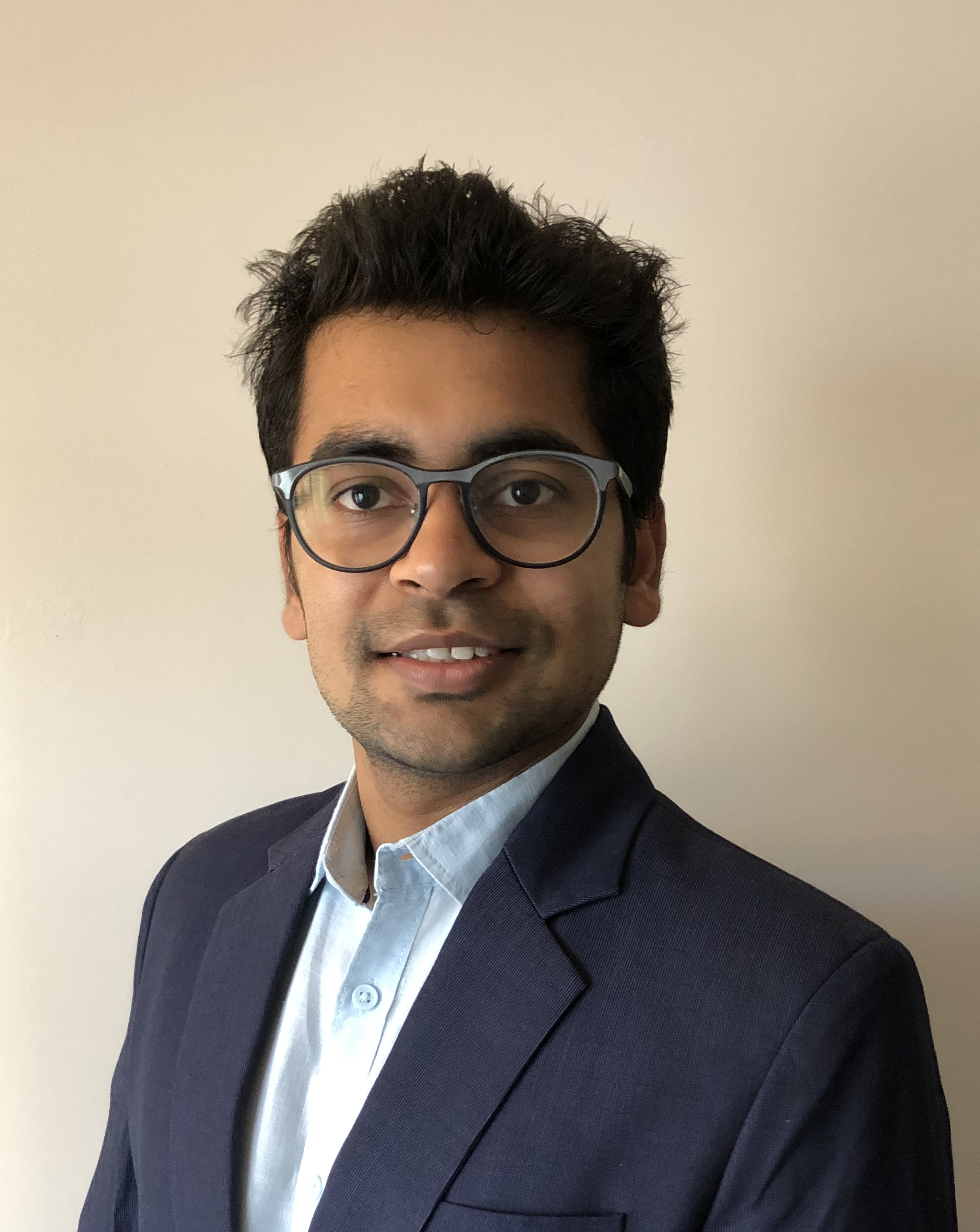}}]{Prakirt Raj Jhunjhunwala} 
   is a Ph.D. student with major in Operations Research and minor in Mathematics at ISyE, Georgia Institute of Technology. He obtained his B.Tech in Electrical Engineering from IIT Bombay. His research interests are Data Center Networks, Queueing Theory, Stochastic Processing Networks, Reinforcement Learning and Simulation Optimization. He is the Recipient of "Best Paper Award" in SPCOM 2018.
\end{IEEEbiography}

\pagebreak

\begin{techreport}
\appendix

\subsection{Appendix A}
\begin{lemma}
\label{lemma: projection_space}
	Let $\mathbf{x} \in \mathbb{R}^{n \times n}$, $\x_{\|}$ denotes its projection onto $\mathcal{S}$ and $\mathbf{x}_{\perp} = \mathbf{x} - \mathbf{x}_{\|}$. Then,
	\begin{itemize}
		\item[(i)] The closed form expression of $\x_{\|}$ is given by,
			\begin{equation*}
 		   		x_{\| ij} = \frac{1}{n} \sum_{j'=1}^n x_{ij'}+\frac{1}{n} \sum_{i'=1}^n x_{i'j} - \frac{1}{n^2} \sum_{i'=1}^n \sum_{j'=1}^n x_{i'j'}.
			\end{equation*}
		\item[(ii)]  For all $i$ and $j$, $ \langle \x_{\|} , \mathbf{e}^i \rangle =  \langle \x , \mathbf{e}^i \rangle$ and $\langle \x_{\|} , \mathbf{\Tilde{e}}^j \rangle =  \langle \x ,\mathbf{\Tilde{e}}^j \rangle$. And $\langle \x_{\perp} , \mathbf{e}^i \rangle = \langle \x_{\perp} , \mathbf{\Tilde{e}}^j \rangle =0$.
		\item[(iii)] For any $\boldsymbol \nu \in \mathcal F$, $\langle \x_{\|} , \boldsymbol \nu \rangle = \frac{1}{n} \sum_{ij}  x_{ij} = \frac{1}{n} \langle \x, \mathbf{1} \rangle$, where $\mathbf 1$ is a $n \times n$ matrix of all ones.
	\end{itemize}
\end{lemma}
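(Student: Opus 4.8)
The plan is to exploit the defining characterization of the orthogonal projection onto the subspace $\mathcal{S}$: the projection $\x_{\|}$ is the unique element of $\mathcal{S}$ for which the residual $\x_{\perp} = \x - \x_{\|}$ is orthogonal to $\mathcal{S}$, and since $\mathcal{S} = \mathrm{span}\{\mathbf{e}^i,\mathbf{\Tilde{e}}^j\}$, this orthogonality is equivalent to the $2n$ scalar conditions $\langle \x_{\perp},\mathbf{e}^i\rangle = 0$ and $\langle \x_{\perp},\mathbf{\Tilde{e}}^j\rangle = 0$ for all $i,j$. I would prove part (i) by a guess-and-verify argument against this characterization, then read off part (ii), and finally derive part (iii) from part (i).

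For part (i), first I would observe that the candidate matrix, whose $(i,j)$ entry is $\frac1n r_i + \frac1n c_j - \frac1{n^2}T$ with $r_i$ the $i$-th row sum, $c_j$ the $j$-th column sum and $T$ the total sum of $\x$, indeed lies in $\mathcal{S}$: it has the form $\sum_i w_i \mathbf{e}^i + \sum_j \Tilde{w}_j \mathbf{\Tilde{e}}^j$ once the $-T/n^2$ term is split between the coefficients $w_i$ and $\Tilde{w}_j$ (each entry of $\sum_i w_i\mathbf{e}^i+\sum_j\Tilde{w}_j\mathbf{\Tilde{e}}^j$ equals $w_i+\Tilde{w}_j$). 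Next I would verify the orthogonality conditions: since $\langle \x,\mathbf{e}^i\rangle$ equals the $i$-th row sum $r_i$, I would compute $\langle \x_{\|},\mathbf{e}^i\rangle$ by summing the candidate over the $i$-th row, obtaining $r_i + \frac1n T - \frac1n T = r_i$, so that $\langle \x_{\perp},\mathbf{e}^i\rangle = 0$; the same computation over columns gives $\langle \x_{\perp},\mathbf{\Tilde{e}}^j\rangle = 0$. By uniqueness of the orthogonal projection, the candidate must equal $\x_{\|}$, which is the claimed formula.

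Part (ii) then falls out for free: the two orthogonality identities just verified are exactly $\langle \x_{\perp},\mathbf{e}^i\rangle = \langle \x_{\perp},\mathbf{\Tilde{e}}^j\rangle = 0$, and writing $\x = \x_{\|} + \x_{\perp}$ and taking inner products with $\mathbf{e}^i$ and $\mathbf{\Tilde{e}}^j$ immediately yields $\langle \x_{\|},\mathbf{e}^i\rangle = \langle \x,\mathbf{e}^i\rangle$ and $\langle \x_{\|},\mathbf{\Tilde{e}}^j\rangle = \langle \x,\mathbf{\Tilde{e}}^j\rangle$.

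For part (iii), the cleanest route is to first apply the formula from part (i) to a doubly stochastic $\boldsymbol\nu \in \mathcal{F}$: all its row and column sums equal $1$ and its total is $n$, so its projection is $\boldsymbol\nu_{\|} = \frac1n\mathbf{1}$. Since $\x_{\|} \in \mathcal{S}$ is orthogonal to $\boldsymbol\nu_{\perp}$, I would write $\langle \x_{\|},\boldsymbol\nu\rangle = \langle \x_{\|},\boldsymbol\nu_{\|}\rangle = \frac1n\langle \x_{\|},\mathbf{1}\rangle$, and then use $\mathbf{1} = \sum_i \mathbf{e}^i \in \mathcal{S}$ together with part (ii) to replace $\langle \x_{\|},\mathbf{1}\rangle$ by $\langle \x,\mathbf{1}\rangle$, giving $\frac1n\langle \x,\mathbf{1}\rangle = \frac1n\sum_{ij}x_{ij}$. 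Equivalently, one can substitute the closed form of $\x_{\|}$ directly into $\sum_{ij}x_{\|ij}\nu_{ij}$ and collapse the sums using double stochasticity. The whole argument is routine; the only point demanding care is the bookkeeping in part (i) --- confirming membership in $\mathcal{S}$ after splitting the $-T/n^2$ term and tracking the index sums correctly --- after which uniqueness of the projection does the rest.
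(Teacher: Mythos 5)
Your proposal is correct and complete: the guess-and-verify argument (exhibit the candidate as an element of $\mathcal{S}$, check that the residual is orthogonal to every $\mathbf{e}^i$ and $\mathbf{\Tilde{e}}^j$, and invoke uniqueness of the orthogonal projection) is the standard proof of part (i), and your derivations of (ii) and (iii) from it are exactly the intended ones. The paper itself does not spell this out --- it cites an external reference for part (i) and states that (ii) and (iii) follow directly --- so your write-up simply supplies the details the paper omits, with no gaps.
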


Part (i) of Lemma \ref{lemma: projection_space} is provided in Appendix A in \cite{doi:10.1287/15-SSY193}, and part (ii) and (iii) follows directly from part (i). 

\begin{lemma}
\label{lemma: projection_cone}
	 Let $\mathbf{x} \in \mathbb{R}^{n \times n}$, $\x_{\| \mathcal{K}}$ denotes the projection onto $\mathcal{K}$ and $\mathbf{x}_{\perp \mathcal{K}} = \mathbf{x} - \mathbf{x}_{\| \mathcal{K}}$. Then
	 \begin{itemize}
	 	\item[(i)] $\x_{\| \mathcal{K}}$ and $\mathbf{x}_{\perp \mathcal{K}}$ are orthogonal, i.e.,  $\langle \mathbf{x}_{\perp \mathcal{K} } ,\x_{\| \mathcal{K}} \rangle =0$.
	 	\item[(ii)] For any $\mathbf x, \mathbf y \in \mathbb{R}^{n \times n}$, $\langle \mathbf x_{\| \calk}, \mathbf y_{\perp \calk} \rangle \leq 0$.
	 	\item[(iii)] For any $\boldsymbol \nu,\boldsymbol \nu' \in \mathcal F$, $\langle \x_{\| \calk} , \boldsymbol \nu \rangle = \langle \x_{\| \calk} ,\boldsymbol  \nu' \rangle$. 
	 	\item[(iv)] For any $\q \in \mathbb{R}^{n\times n}$ and $\boldsymbol \nu \in \mathcal{F}$,
	 	\begin{equation*}
\label{eq: nu_min}
   \mathbf{\boldsymbol \nu} + \frac{\nu_{\min}}{\norm{\q_{\perp \mathcal{K}}}} \q_{\perp \mathcal{K}} \in \mathcal{C}.
\end{equation*}
	 \end{itemize}
\end{lemma}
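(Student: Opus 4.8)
The plan is to handle all four parts through a single tool: the variational characterization of the Euclidean (Frobenius) projection onto the closed convex cone $\calk$, together with the polar-cone decomposition it induces. Since $\calk = \mathcal S \cap \mathbb{R}_+^{n\times n}$ is the intersection of a subspace and the non-negative orthant, it is a closed convex cone, so $\x_{\|\calk} = P_{\calk}(\x)$ is well defined and is the unique point satisfying $\x_{\|\calk} \in \calk$, $\langle \x - \x_{\|\calk}, \x_{\|\calk}\rangle = 0$, and $\langle \x - \x_{\|\calk}, \mathbf z\rangle \le 0$ for every $\mathbf z \in \calk$. Equivalently, $\x_{\perp\calk}$ lies in the polar cone $\calk^{\circ} = \{\mathbf y : \langle \mathbf y, \mathbf z\rangle \le 0 \text{ for all } \mathbf z \in \calk\}$ and is orthogonal to $\x_{\|\calk}$. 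I would record this characterization first and then read off the four parts.

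Parts (i) and (ii) are then routine. Part (i) is exactly the middle condition $\langle \x_{\perp\calk}, \x_{\|\calk}\rangle = 0$. For part (ii), I would note that $\mathbf y_{\perp\calk} \in \calk^{\circ}$ while $\x_{\|\calk} \in \calk$, so the defining inequality of the polar cone, applied with $\mathbf z = \x_{\|\calk}$, immediately gives $\langle \x_{\|\calk}, \mathbf y_{\perp\calk}\rangle \le 0$.

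For part (iii) the key observation is that $\x_{\|\calk} \in \calk \subset \mathcal S$. Given $\boldsymbol\nu, \boldsymbol\nu' \in \mathcal F$, their difference $\boldsymbol\nu - \boldsymbol\nu'$ has all row and column sums equal to zero, i.e. $\langle \boldsymbol\nu - \boldsymbol\nu', \mathbf e^i\rangle = \langle \boldsymbol\nu - \boldsymbol\nu', \tilde{\mathbf e}^j\rangle = 0$ for all $i,j$, so $\boldsymbol\nu - \boldsymbol\nu'$ is orthogonal to the spanning set of $\mathcal S$ and hence to $\mathcal S$ itself. Since $\x_{\|\calk} \in \mathcal S$, this yields $\langle \x_{\|\calk}, \boldsymbol\nu - \boldsymbol\nu'\rangle = 0$, which is the claim; alternatively one may invoke Lemma~\ref{lemma: projection_space}(iii) applied to $\x_{\|\calk}$, whose projection onto $\mathcal S$ is itself.

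Part (iv) carries the real content and is where I expect the main difficulty, so I would verify membership in $\mathcal C$ constraint by constraint, writing $c \triangleq \nu_{\min}/\norm{\q_{\perp\calk}}$. Non-negativity follows from the entrywise domination $|(\q_{\perp\calk})_{ij}| \le \norm{\q_{\perp\calk}}$, giving $\nu_{ij} + c(\q_{\perp\calk})_{ij} \ge \nu_{\min} - c\norm{\q_{\perp\calk}} = 0$. For the row- and column-sum constraints I would use that $\mathbf e^i, \tilde{\mathbf e}^j \in \calk$, since they lie in $\mathcal S$ and are non-negative; the polar-cone property then gives $\langle \q_{\perp\calk}, \mathbf e^i\rangle \le 0$ and $\langle \q_{\perp\calk}, \tilde{\mathbf e}^j\rangle \le 0$, so that, using that $\boldsymbol\nu$ is doubly stochastic, each row or column sum of $\boldsymbol\nu + c\q_{\perp\calk}$ equals $1 + c\langle \q_{\perp\calk}, \mathbf e^i\rangle \le 1$. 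The one delicate point is reconciling these non-strict bounds with the strict inequalities in the definition of $\mathcal C$: I would either read the conclusion as membership in the closure $\overline{\mathcal C}$, or observe that this non-strict version is exactly what the later invocations of the lemma require.
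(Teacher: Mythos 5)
Your proposal is correct and takes essentially the same route as the paper: parts (i)--(iii) are exactly the standard projection/polar-cone facts the paper delegates to its convex-analysis reference, and your argument for part (iv) --- entrywise non-negativity from $|(\q_{\perp \calk})_{ij}| \le \norm{\q_{\perp \calk}}$ together with the row/column bounds obtained from $\mathbf e^i, \tilde{\mathbf e}^j \in \calk$ and the polar-cone inequality of part (ii) --- is precisely the paper's own proof of that part. Your closing remark that the conclusion is really membership in the closure $\overline{\mathcal C}$ (the row/column sums are only shown to be $\le 1$) is a fair point the paper glosses over, and the non-strict version is indeed all that the later applications of the lemma use.
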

Details regarding Lemma \ref{lemma: projection_space} and \ref{lemma: projection_cone} can be found in \cite[Chapter E]{dattorro2010convex}. The proof of Lemma \ref{lemma: projection_cone} part (iv) is given in Claim 2 in \cite{doi:10.1287/15-SSY193}. We also provide the proof of Lemma \ref{lemma: projection_cone} part (iv) below.

\begin{proof}[Proof of Lemma \ref{lemma: projection_space} part (iv)]
As $|q_{\perp \mathcal{K} ij}| \leq  \norm{\q_{\perp \mathcal{K}}}$, we get that $\nu_{ij} + \frac{\nu_{\min}}{\norm{\q_{\perp \mathcal{K}}}} q_{\perp \mathcal{K} ij} \geq 0$ and so $\mathbf{\boldsymbol \nu} + \frac{\nu_{\min}}{\norm{\q_{\perp \mathcal{K}}}} \q_{\perp \mathcal{K}} \in \mathbb{R}^{n\times n}_+$. Also, as we know that $\mathbf{e}^i \in \mathcal{K }$, so $\langle \q_{\perp \mathcal{K}}, \mathbf{e}^i \rangle \leq 0$ by Lemma \ref{lemma: projection_cone} part (ii). Thus, we have that $\forall i$,
\begin{align*}
    \Big \langle \mathbf{\boldsymbol\nu} + \frac{\nu_{\min}}{\norm{\q_{\perp \mathcal{K}}}} \q_{\perp \mathcal{K}}, \mathbf{e}^i \Big \rangle &= \langle \mathbf{\boldsymbol\nu} , \mathbf{e}^i \rangle + \frac{\nu_{\min}}{\norm{\q_{\perp \mathcal{K}}}} \langle \mathbf{q}_{\perp \mathcal{K}} , \mathbf{e}^i \rangle \\
    & \leq \langle \mathbf{\boldsymbol\nu} , \mathbf{e}^i \rangle \\
    & = 1,
\end{align*}
where the last equality holds because $\mathbf{\boldsymbol\nu}\in \mathcal{F}$. Note that same arguments holds for $\Tilde{\mathbf{e}}^j$ for all $j$. This completes the proof.
\end{proof}

\subsection{Appendix B}
\begin{lemma}
	\label{lemma: drift_results}
	Consider a switch system and let the corresponding Markov chain $X(t)$ and let $\q(t) = g(X(t))$. Consider the Lyapunov functions defined as follows,
\begin{align*}
   V(X) \triangleq \norm{\q}^2 && W_{\perp \mathcal{K}}(X) \triangleq \norm{\q_{\perp \mathcal{K}}} && V_{\| \mathcal{K}}(X) \triangleq \norm{\q_{\| \mathcal{K}}}^2,
\end{align*} 
	 where $\q = g(X)$. Then, 
	\begin{itemize}
		\item[(i)] Let $K = \norm{\mathbf{\boldsymbol \lambda}}^2 + \norm{\mathbf{\boldsymbol \sigma}}^2 + n$, then,
			\begin{align*}
    			\mathbb{E} \big[  \Delta V(X) \big| X(t)=X \big] \leq & K  + 2(1-\epsilon)\langle \q ,\boldsymbol  \nu \rangle \\
   				 								& - 2 \mathbb{E}[ \langle \q(t),\s(t) \rangle | X(t) = X ].
			\end{align*}
		\item[(ii)] The drift $\Delta V_{\| \calk} (X)$ satisfies 
		\begin{equation*}
			\ex \big[ \Delta V_{\| \calk} (X)\big| X(t)=X \big] \geq -2\epsilon \langle \q_{\| \calk},\boldsymbol  \nu \rangle.
		\end{equation*}
		\item[(iii)] The drift $\Delta W_{\perp \mathcal{K}}(X)$ satisfies condition C.2 with $D = na_{\max}$, i.e.,
		\begin{equation*}
			\pr \big( |\Delta W_{\perp \mathcal{K}}(X)| \leq n a_{\max} \big) =1.
		\end{equation*}
		\item [(iv)] The drift $\Delta W_{\perp \mathcal{K}}(X)$ is related to the drift $\Delta V(X)$ and  $\Delta V_{\| \mathcal{K}} (X)$ by following equation,
			\begin{equation*}
				\label{eq: perp_drift}
    			\Delta W_{\perp \mathcal{K}}(X) \leq \frac{1}{2\norm{\q_{\perp \mathcal{K}}}} ( \Delta V(X) - \Delta V_{\| \mathcal{K}}(X)).
			\end{equation*}
	\end{itemize}
\end{lemma}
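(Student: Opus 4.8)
The plan is to handle the four parts separately, all starting from the queue recursion $\q(t+1) = \q(t) + \ar(t) - \s(t) + \mathbf{u}(t)$ and the orthogonality relations of Lemma \ref{lemma: projection_cone}. For part (i), I would first use the entrywise fact that $([x]^+)^2 \le x^2$ to discard the unused service, giving $\norm{\q(t+1)}^2 \le \norm{\q(t) + \ar(t) - \s(t)}^2$. Expanding the square produces $\norm{\q(t)}^2 + \norm{\ar(t) - \s(t)}^2 + 2\langle \q(t), \ar(t) - \s(t)\rangle$. Taking the conditional expectation given $X(t) = X$, I would use that the arrivals are i.i.d.\ and independent of $\s(t)$ (which is chosen from the history up to time $t$) to compute $\ex[\norm{\ar(t)}^2] = \norm{\mathbf{\boldsymbol\sigma}}^2 + \norm{\mathbf{\boldsymbol\lambda}}^2$ and $\norm{\s(t)}^2 = n$, and to drop the nonnegative cross term $\langle \mathbf{\boldsymbol\lambda}, \ex[\s(t)\mid X]\rangle$; this bounds $\ex[\norm{\ar(t)-\s(t)}^2\mid X]$ by $K$. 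Substituting $\mathbf{\boldsymbol\lambda} = (1-\epsilon)\mathbf{\boldsymbol\nu}$ into $2\langle \q, \mathbf{\boldsymbol\lambda}\rangle$ finishes this part.

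For part (ii), the key observation is that $\x \mapsto \norm{\x_{\| \calk}}^2$ is convex with subgradient $2\x_{\| \calk}$, so that $\Delta V_{\| \calk}(X) \ge 2\langle \q_{\| \calk}, \q(t+1) - \q(t)\rangle = 2\langle \q_{\| \calk}, \ar(t) - \s(t) + \mathbf{u}(t)\rangle$. Since $\q_{\| \calk} \in \calk \subseteq \mathbb{R}^{n\times n}_+$ and $\mathbf{u}(t) \ge 0$, the unused-service term is nonnegative and may be dropped. Taking expectations, the important step is that $\ex[\s(t)\mid X]$, being an average of permutation matrices, is doubly stochastic and hence lies in $\mathcal{F}$, so by Lemma \ref{lemma: projection_cone}(iii) the inner product $\langle \q_{\| \calk}, \mathbf{\boldsymbol\nu}'\rangle$ is the same for every $\mathbf{\boldsymbol\nu}' \in \mathcal{F}$. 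Combining this with $\langle \q_{\| \calk}, \mathbf{\boldsymbol\lambda}\rangle = (1-\epsilon)\langle \q_{\| \calk}, \mathbf{\boldsymbol\nu}\rangle$ collapses the bound to $-2\epsilon\langle \q_{\| \calk}, \mathbf{\boldsymbol\nu}\rangle$.

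For part (iii), I would use that $\norm{\x_{\perp \calk}} = \mathrm{dist}(\x, \calk)$ is $1$-Lipschitz, since the distance to a convex set is nonexpansive; hence $|\Delta W_{\perp \calk}(X)| \le \norm{\q(t+1) - \q(t)}$. As each of the $n^2$ entries of $\q(t+1) - \q(t)$ changes by at most $a_{\max}$ in magnitude, the Frobenius norm is at most $n a_{\max}$, which is exactly condition C.2 with $D = n a_{\max}$. For part (iv), orthogonality (Lemma \ref{lemma: projection_cone}(i)) gives $\norm{\q_{\perp \calk}}^2 = \norm{\q}^2 - \norm{\q_{\| \calk}}^2$, so that $\Delta(W_{\perp \calk}^2) = \Delta V(X) - \Delta V_{\| \calk}(X)$; I would then apply the elementary inequality $b - a \le (b^2 - a^2)/(2a)$, valid for $a, b \ge 0$ with $a > 0$ (it follows from $(b-a)^2 \ge 0$), with $a = \norm{\q_{\perp \calk}}$ and $b = \norm{\q(t+1)_{\perp \calk}}$.

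I expect part (ii) to be the main obstacle, as it is the only part requiring a genuine property of the cone projection (the convexity/subgradient inequality) rather than just orthogonality, and it relies on correctly discarding the unused-service term and on the invariance of $\langle \q_{\| \calk}, \cdot\rangle$ across the facet $\mathcal{F}$. The remaining parts reduce to the recursion, the Lipschitz property of the distance-to-cone map, and a one-line scalar inequality.
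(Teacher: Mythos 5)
Your proposal is correct, and it follows essentially the same route as the proof the paper defers to in \cite{doi:10.1287/15-SSY193}: expanding the quadratic and discarding unused service via $([x]^+)^2 \le x^2$ for (i), the subgradient inequality $\norm{\y_{\| \mathcal{K}}}^2 - \norm{\x_{\| \mathcal{K}}}^2 \ge 2\langle \x_{\| \mathcal{K}}, \y - \x\rangle$ together with $\ex[\s(t)\mid X]$ being doubly stochastic for (ii), nonexpansiveness of the distance to the cone for (iii), and Pythagoras plus $b-a \le (b^2-a^2)/(2a)$ for (iv). The only nitpick is that in (iii) the entrywise change can be a decrease of $1$ due to service, so the bound $na_{\max}$ implicitly uses $a_{\max}\ge 1$ (i.e., $D = n\max\{a_{\max},1\}$ in general), which is the same implicit convention as in the cited proof.
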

The proof of every part in Lemma \ref{lemma: drift_results} is given in \cite{doi:10.1287/15-SSY193}, so we skip the proof here. Next, we state the Lemma that is of key importance in order to prove the state-space collapse for the switch system. 

\begin{lemma}
\label{lemma: moment_bound}
Consider an irreducible and aperiodic Markov chain $X(t)$ over a countable state space $\mathcal{A}$ and suppose $Z : \mathcal{A} \rightarrow \mathbb{R}_{+}$ is a non-negative Lyapunov function. Further assume that Markov chain $X(t)$ converges in distribution to $\overline{X}$.
\begin{itemize}
	\item[(i)] If the drift $ \Delta Z(X)$ follows condition C.1 and C.2, then, for any $r = 1,2,\dots$,
	\begin{equation}
	\label{eq: part1_moment_bound}
    	\mathbb{E} \big[Z(\overline{X})^r \big] \leq (2\kappa)^r + (4D)^r \Bigg ( \frac{D + \eta}{\eta} \Bigg )^{r} r!,
	\end{equation}
	where $\kappa, \eta$ and $D$ are same as that defined in C.1 and C.2. 
	\item[(ii)] If the drift $ \Delta Z(X)$ follows condition C.1 and C.3, there exists a constant $C>0$ such that,
		\begin{equation*}
			\mathbb{E} \big[e^{\theta Z(\overline{X})} \big] \leq C,
		\end{equation*}
	where $\theta$ is same as that in condition C.2. This implies that for any $r = 1,2,\dots$, there exists $C_r <\infty$ such that $\mathbb{E} \big[Z(\overline{X})^r \big] < C_r$.
\end{itemize}

\end{lemma}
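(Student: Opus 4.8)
The plan is to prove both parts through a single discrete, Hajek-type argument driven by an exponential test function, in which the negative drift from C.1 is used to dominate the fluctuations controlled by C.2 or C.3. Note first that C.2 is the special case of C.3 obtained by taking $M \equiv D$ deterministic, so the same machinery serves both parts; the only difference is that for part (i) the constants must be tracked explicitly, whereas for part (ii) it suffices that they are finite. First I would fix a parameter $\theta>0$ (to be calibrated later), set $V(X) = e^{\theta Z(X)}$, and compute its one-step conditional drift. Writing $Z(X(t+1)) = Z(X) + \Delta Z(X)$ on the event $\{X(t) = X\}$ gives
\begin{equation*}
\mathbb{E}\big[ V(X(t+1)) \mid X(t) = X \big] = e^{\theta Z(X)}\, \mathbb{E}\big[ e^{\theta \Delta Z(X)} \mid X(t) = X \big],
\end{equation*}
so the entire problem reduces to controlling the multiplicative factor $\mathbb{E}[ e^{\theta \Delta Z(X)} \mid X(t) = X ]$.

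On the set $\{Z(X) \ge \kappa\}$ I would use the elementary bound $e^{\theta x} \le 1 + \theta x + \tfrac{\theta^2}{2} x^2 e^{\theta |x|}$, take conditional expectations, and combine C.1 (so that $\mathbb{E}[\Delta Z(X) \mid X] \le -\eta$) with the stochastic domination of $|\Delta Z|$ by $M$ from C.3 (so that $\mathbb{E}[(\Delta Z)^2 e^{\theta|\Delta Z|} \mid X] \le \mathbb{E}[M^2 e^{\theta M}]$, finite for $\theta$ below the threshold of C.3). This yields, for all sufficiently small $\theta$,
\begin{equation*}
\mathbb{E}\big[ e^{\theta \Delta Z(X)} \mid X(t) = X \big] \le 1 - \theta \eta + \theta^2 B \le 1 - \rho
\end{equation*}
for some $\rho \in (0,1)$, where $B$ absorbs the higher-order terms. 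On the complementary set $\{Z(X) < \kappa\}$ I would bound crudely, $\mathbb{E}[V(X(t+1)) \mid X(t)=X] \le e^{\theta \kappa}\,\mathbb{E}[e^{\theta M}] =: L < \infty$. Together these give the geometric Foster--Lyapunov inequality $\mathbb{E}[V(X(t+1)) \mid X(t)] \le (1-\rho)\, V(X(t)) + L\,\mathbb{I}(Z(X(t)) < \kappa)$.

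For part (ii) I would take unconditional expectations and iterate from a fixed initial state $X(0)$ (for which $V(X(0))<\infty$ automatically, the state space being countable), obtaining $\mathbb{E}[V(X(t))] \le (1-\rho)^t V(X(0)) + L/\rho$; letting $t \to \infty$, and invoking the assumed convergence in distribution together with Fatou's lemma for the continuous nonnegative map $V$, yields $\mathbb{E}[e^{\theta Z(\bar X)}] \le L/\rho =: C$, after which all polynomial moments are finite via $z^r \le (r!/\theta^r)e^{\theta z}$. For part (i) I would instead specialize to C.2: with increments bounded deterministically by $D$, the expansion above can be calibrated explicitly (the natural choice of $\theta$ being of order $\eta/(D(D+\eta))$), upgrading the exponential estimate to an explicit geometric tail $\mathbb{P}(Z(\bar X) \ge \kappa + s) \le \gamma\, e^{-\theta^\ast s}$ with rate $\theta^\ast$ of order $\eta/\big(4D(D+\eta)\big)$. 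Splitting $\mathbb{E}[Z(\bar X)^r] = \mathbb{E}[Z^r\,\mathbb{I}(Z < 2\kappa)] + \mathbb{E}[Z^r\,\mathbb{I}(Z \ge 2\kappa)]$, bounding the first term by $(2\kappa)^r$ and integrating the geometric tail in the second (which produces the factor $r!$ against $\big(4D(D+\eta)/\eta\big)^r$) reproduces exactly the stated bound in Eq. \eqref{eq: part1_moment_bound}.

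The step I expect to be the main obstacle is the passage to steady state: a naive ``expected drift is zero in stationarity'' argument would presuppose the very integrability of $e^{\theta Z(\bar X)}$ that we are trying to establish. The iterate-and-Fatou route above circumvents this, but it must be set up carefully, starting from a state with finite $V$ and relying on positive recurrence, which follows from C.1 by Foster's criterion and is consistent with the assumed convergence in distribution to $\bar X$. The second delicate point, specific to part (i), is the calibration of $\theta$: it must be small enough that the quadratic remainder of the expansion of $e^{\theta \Delta Z}$ is dominated by the linear negative-drift term $-\theta\eta$, yet chosen so that the emerging constants collapse cleanly to the precise form $(2\kappa)^r + (4D)^r\big((D+\eta)/\eta\big)^r\, r!$.
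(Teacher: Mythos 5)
Your proposal is correct and follows essentially the same route as the paper, which does not prove this lemma directly but defers to Hajek (1982) for part (ii) and to Lemmas 2 and 3 of \cite{doi:10.1287/15-SSY193} for part (i) --- results whose proofs are precisely the exponential test function $e^{\theta Z}$, the geometric Foster--Lyapunov inequality, the iterate-and-Fatou passage to stationarity, and the integration of the resulting geometric tail that you describe. Your observation that C.2 is the degenerate case $M\equiv D$ of C.3, and your identification of the circularity one must avoid when passing to steady state, are exactly the right points of care; the only unchecked step is the final calibration of $\theta$ yielding the explicit constants in Eq.~\eqref{eq: part1_moment_bound}, which is routine and matches the cited Lemma 3.
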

The proof of part (i) of Lemma \ref{lemma: moment_bound} follows from \cite[Lemma 2 and Lemma 3]{doi:10.1287/15-SSY193} and the proof of part (ii) of Lemma \ref{lemma: moment_bound} follows from \cite{hajek1982hitting}.

\subsection{Appendix C}
\begin{proof}[Proof of Proposition \ref{prop: stable_random}]

For random scheduling, the chosen schedule does not depend on the history of the system. Thus, the process $\q(t)$ itself forms a Markov chain. So, we take $X(t) = \q(t)$. The aperiodicity of Markov chain $ \q(t)$ in that case follows from part (iv) of Assumption \ref{assu: arrival_process}, as the state $\q = \mathbf 0$ has a self loop. And irreducibility follows by taking the state space to be the set of states reachable from $\mathbf 0$ as given in \cite[Exercise 4.2]{srikant2014communication}. Also, it is evident that the chain $\q(t)$ satisfies the condition A.2. 

For random scheduling, each of the $n^2$ queues behave independently and the service process for each queue is Bernoulli i.e., for any $(i,j)$, $s_{ij}(t)$ are Bernoulli random variables with $\mathbb{E}[ s_{ij}(t) ] = \frac{1}{n}$. Under the uniform traffic, the mean arrival rate for each queue is $\lambda_{ij} = \frac{(1-\epsilon)}{n}$. This implies that the mean service rate is higher than the mean arrival rate. So, the Markov chain $\q(t)$ is positive recurrent and the switch system is stable.

Also, as the service process for each queue is Bernoulli with mean $\frac{1}{n}$, $\text{Var}(s_{ij}(t)) = \frac{n-1}{n^2}$. Then by using the Kingman's formula \cite{srikant2014communication}, we get that for each $(i,j)$,
\begin{equation*}
    \lim_{\epsilon \downarrow 0} \epsilon \mathbb{E}[\Bar{q}_{ij}] = \frac{n}{2} \tilde \sigma^2_{ij} + \frac{n(n-1)}{2}.
\end{equation*}
By adding the above results for all $(i,j)$, we get the result given in Eq. \eqref{eq: random_uniform}. Finally, the result in Eq. \eqref{eq: random_bernoulli} follows directly from Eq. \eqref{eq: random_uniform} by substituting $\norm{\tilde{\boldsymbol  \sigma}}^2 = n-1$ for  uniform Bernoulli traffic.
\end{proof}



\subsection{Appendix D}

\begin{proof}[Proof of Theorem \ref{thm: ssc_class_1}]
In order to prove SSC, we first need to prove that for any scheduling algorithm that lies in class $\Pi_1(\boldsymbol \nu)$, the switch system is stable. Recall \cblue that by Definition \ref{Def:class_1}, there exists a Markov chain $X(t)$ that  satisfy condition A.1 and A.2 \cblack and there exists $g(\cdot)$ such that $\q(t) = g(X(t))$ for all $t\geq 0$. Take $\q = g(X)$. \cblue By Definition \ref{Def:class_1}, 
\begin{equation*}
    \mathbb{E}[ \langle \q(t),\s(t) \rangle | X(t) = X ] \geq \langle \q,\boldsymbol \nu \rangle + W_1 \norm{\q_{\perp \mathcal{K}}}.
\end{equation*}
By substituting this in Lemma \ref{lemma: drift_results} part (i), for the Lyapunov function $V(X)$ defined in Lemma \ref{lemma: drift_results}, \color{black}
\begin{align*}
	\mathbb{E} \big[  \Delta V(X) \big| X(t)=X \big] &\leq  K  - 2\epsilon\langle \q , \boldsymbol \nu \rangle-  2W_1\norm{\q_{\perp \calk}} \\
	&\leq  K  - 2\epsilon\langle \q ,\boldsymbol  \nu \rangle\\\
	& \leq K - 2 \epsilon \nu_{\min} \langle \q , \mathbf 1 \rangle \\
	& \stackrel{(a)}{\leq} - \epsilon \nu_{\min} \langle \q , \mathbf 1 \rangle ,
\end{align*}
where $(a)$ holds whenever $\langle \q , \mathbf 1 \rangle \geq  K / \epsilon \nu_{\min}$. It is easy to observe that the set of $\q$ for which $\langle \q , \mathbf 1 \rangle <  K / \epsilon \nu_{\min}$ forms a finite set whenever $\nu_{\min}>0$. By condition A.2, the set of $X$ for which $\langle \q , \mathbf 1 \rangle <  K / \epsilon \nu_{\min}$ is also a finite set. \cblue This means that the drift of the Lyapunov function $V(\cdot)$ is negative outside a finite set. \cblack. Then by using Foster-Lyapunov theorem, Markov chain $X(t)$ is positive recurrent and so the switch is stable.  

For the second requirement of SSC, \cblue we have to show that all the moments of $\norm{\q_{\perp \calk}}$ is bounded by constant that is independent of $\epsilon$, which we show by using part (i) of Lemma \ref{lemma: moment_bound}. \cblack From part (iii) of Lemma \ref{lemma: drift_results}, we have that for any $X$, the drift $\Delta W_{\perp \mathcal{K}} (X)$ satisfies condition C.2 with $D = n a_{\max}$. Now, we show that drift $ \Delta W_{\perp \mathcal{K}} (X)$ satisfies condition C.1.
We already know that 
\begin{equation*}
    \mathbb{E} \big[  \Delta V(X)\big| X(t)=X  \big] \leq K  - 2\epsilon\langle \q ,\boldsymbol  \nu \rangle - 2W_1 \norm{\mathbf{q}_{\perp \mathcal{K}}}.
\end{equation*}
Also, from Lemma \ref{lemma: drift_results} part (ii),
\begin{align*}
    \mathbb{E} \big[ \Delta V_{\| \mathcal{K}}(X) \big| X(t)=X \big] \allowdisplaybreaks \geq -2\epsilon \langle \q_{\| \mathcal{K}} ,\boldsymbol  \nu\rangle.
\end{align*}
By substituting the above equations in Lemma \ref{lemma: drift_results} part (iv), 
\begin{align*}
	 \mathbb{E} \big[ &\Delta W_{\perp \mathcal{K}}(X)\big| X(t)=X  \big] \\
	 &\leq \frac{1}{2\|\q_{\perp \mathcal{K}}\|} \Big ( K - 2\epsilon\langle \q_{\perp \mathcal{K}} ,\boldsymbol  \nu \rangle  - 2W_1 \norm{\q_{\perp \mathcal{K}}}  \Big )\\
	 &\cblue\leq \frac{1}{2\|\q_{\perp \mathcal{K}}\|} \Big ( K + 2\epsilon |\langle \q_{\perp \mathcal{K}} ,\boldsymbol  \nu \rangle|  - 2W_1 \norm{\q_{\perp \mathcal{K}}}  \Big ).\color{black}
\end{align*}
By Cauchy-Schwarz inequality, we get that $|\langle \q_{\perp \mathcal{K}} , \boldsymbol  \nu \rangle| \leq \norm{\q_{\perp \mathcal{K}}}\norm{\boldsymbol \nu}$. \cblue Note that the term $\langle \mathbf q_{\perp \mathcal{K}} , \boldsymbol  \nu \rangle$ might be positive because $ \mathbf q_{\perp \mathcal{K}}$ can have negative components. \color{black} By using this,
\begin{align*}
    \mathbb{E} \big[ \Delta W_{\perp \mathcal{K}}(X)\big| X(t)=X  \big] &\leq \frac{K}{2 \norm{\q_{\perp \mathcal{K}}}} +  \epsilon \norm{\boldsymbol \nu} - W_1 \\
    &\cblue \stackrel{(a)}{\leq} \frac{K}{2 \norm{\q_{\perp \mathcal{K}}}} - \frac{W_1}{2}\\
    & \stackrel{(b)}{\leq}  - \frac{W_1}{4},
\end{align*}
where $(a)$ follows when $\epsilon \leq \epsilon_0 = \frac{W_1}{2\norm{\boldsymbol \nu}}$ and (b) holds when $ \norm{\q_{\perp \mathcal{K}}} \geq 2 K/W_1$. This fulfils the condition C.2 with $\kappa = 2K/W_1$ and $\eta = W_1/4$. Then, by Lemma \ref{lemma: moment_bound} part (i), $\mathbb{E} \big[\norm{\Bar{\q}_{\perp \mathcal{K}}}^r \big] < C_r < \infty$, where $C_r$ can be derived by substituting the values of $\kappa$, $\eta$ and $D$ in Eq. \eqref{eq: part1_moment_bound} \cblue where $\kappa$, $\eta$ and $D$ are independent of $\epsilon$. \cblack So any scheduling algorithm in class $\Pi_1(\boldsymbol \nu)$ satisfies Eq. \eqref{eq: qperp_bound}. This completes the proof.
\end{proof}



\subsection{Appendix E}
\begin{proof}[Proof of Lemma \ref{lemma: expectation_bound} for power-of-$d$]
It is easy to observe that for power-of-$d$, the expected weight of the schedule increases as the value of $d$ increases. So, without loss of generality, we provide the proof only for power-of-$2$. \cblue One can do a more complicated analysis as compared to the one mentioned below to get a stronger bound, but for the results mentioned in this paper, we do not require such a strong bound. So we present a bound for power-of-$2$ and use the same lower bound for all values of $d$. \cblack

For the simplicity of notations, we drop the index $t$ in this proof. The weight the schedule chosen by  power-of-$2$ is,
\begin{align*}
    \langle \q,\s \rangle &= \max_{\s_1,\s_2} \{\langle \q,\s_1 \rangle, \langle \q,\s_2 \rangle  \}\\
    & = \langle \q,\s_1 \rangle + \max_{\s_1,\s_2} \{0, \langle \q,\s_2 \rangle -\langle \q,\s_1 \rangle  \},
\end{align*}
where $\s_1$ and $\s_2$ are sampled uniformly at random from $\mathcal X$. As $\s_1$ is a schedule chosen uniform at random, $\mathbb{E} [\langle \q,\s_1 \rangle] = \frac{1}{n} \langle \q,\mathbf 1 \rangle$.
Let $\Delta_{Po2}$ be the improvement in the weight (as compared to random scheduling) by doing power-of-$2$, then,
\begin{equation*}
	\Delta_{Po2}  = \max_{\s_1,\s_2} \{0, \langle \q,\s_2 \rangle -\langle \q,\s_1 \rangle  \}.
\end{equation*}
It follows that,
\begin{align} 
\label{eq: po2_eq1}
    \mathbb{E} \big[\Delta_{Po2} \big|\q \big] &= \frac{1}{(n!)^2} \sum_{s_1} \sum_{s_2} \max_{\s_1,\s_2} \{0, \langle \q,\s_2 \rangle -\langle \q,\s_1 \rangle  \} \nonumber \allowdisplaybreaks\\
    &= \frac{1}{(n!)^2} \sum_{s_1} \sum_{s_2}\frac{\langle \q,\s_2 \rangle - \langle \q,\s_1 \rangle}{2}\nonumber\\
    & \quad + \frac{1}{(n!)^2} \sum_{s_1} \sum_{s_2} \frac{|\langle \q,\s_1 \rangle - \langle \q,\s_2 \rangle|}{2}\nonumber \allowdisplaybreaks \\
    & = \frac{1}{2(n!)^2} \sum_{s_1} \sum_{s_2} \big|\langle \q,\s_1 \rangle - \langle \q,\s_2 \rangle \big|\nonumber \allowdisplaybreaks \\
    & \cblue \stackrel{(a)}{\geq} \frac{1}{2 (n!)} \sum_{\s_1} \Big |\frac{1}{n!} \sum_{\s_2} \langle \q,\s_1 \rangle - \frac{1}{n!} \sum_{\s_2} \langle \q,\s_2 \rangle \Big |\nonumber \allowdisplaybreaks \\
    & = \frac{1}{2 (n!)} \sum_{\s_1} \Big | \langle \q,\s_1 \rangle - \frac{1}{n!} \sum_{\s_2} \langle \q,\s_2 \rangle \Big |\nonumber \allowdisplaybreaks \\
    &\stackrel{(b)}{=} \frac{1}{2 (n!)} \sum_{\s_1} \Big | \langle \q,\s_1 \rangle - \frac{1}{n}\sum_{ij}q_{ij} \Big |\nonumber \allowdisplaybreaks \\
    &\stackrel{(c)}{=} \frac{1}{2 (n!)} \sum_{\s_1} \Big | \langle \q,\s_1 \rangle - \langle \q_{\|},\s_1 \rangle \Big |\nonumber \allowdisplaybreaks \\
    &= \frac{1}{2 (n!)} \sum_{\s_1} \Big | \langle \q_{\perp},\s_1 \rangle \Big | \allowdisplaybreaks,
\end{align}
where (a) follows by using triangle inequality, \cblue (b) follows because $\sum_{\s \in \mathcal{X}} \s = (n-1)! \mathbf{1}$, where $\mathcal{X}$ is the set of all possible schedules \color{black} and (c) follows by using Lemma \ref{lemma: projection_space} part (iii) as $\s_1 \in \mathcal F$. Now, consider a particular index $(i,j)$. Out of total $n!$ possible schedules, there would be $(n-1)!$ schedules with $s_{ij} = 1 $. Let $X(i,j)$ denote the set of all such schedules with $s_{ij} = 1 $. Note that we can partition $\mathcal{X}$ into $(n-1)!$ subsets, each containing $n$ schedule such that the entrywise sum of those schedule is $\mathbf{1}$. Similarly, we can partition $X(i,j)$ into $(n-2)!$ subsets (denoted by $X_k(i,j)$ for $k \in \{ 1, \dots, (n-2)!\}$), each of size $(n-1)$ such that 
\begin{equation*}
    \sum_{\s \in X_k(i,j)} \mathbf{s} = \mathbf{1} - \e^i -\Tilde{\e}^j + n\e^{ij},
\end{equation*}
where $\e^{ij}$ is a $n\times n$ matrix with 1 at index $(i,j)$ and all else 0. Then, for any $(i,j)$ \cblue we can just discard any schedule for which $s_{ij} =0$ to get \color{black}
\begin{align*}
     \sum_{\s} \Big | \langle \q_{\perp},\s \rangle \Big | &\geq \sum_{k=1}^{(n-2)!}\sum_{\s \in X_k(i,j)} \Big | \langle \q_{\perp},\s \rangle \Big | \allowdisplaybreaks\\
    &\geq \sum_{k=1}^{(n-2)!} \Big | \sum_{\s \in X_k(i,j)} \langle \q_{\perp},\s \rangle \ \Big | \allowdisplaybreaks\\
    &= \sum_{k=1}^{(n-2)!} \Big | \langle \q_{\perp},\mathbf{1}  -\e^i - \Tilde{\e}^j + n \e^{ij} \rangle \Big | \allowdisplaybreaks \\
    &\stackrel{(a)}{=}\sum_{k=1}^{(n-2)!} n | q_{\perp ij} | \allowdisplaybreaks\\
    &=  n \times  (n-2)! \times | q_{\perp ij} | \allowdisplaybreaks\\
    &\geq (n-1)!\times | q_{\perp ij} |
\end{align*}
where (a) follows from Lemma \ref{lemma: projection_space} part (ii). As this is true for all $(i,j)$ then we can take the element with highest value of $|q_{\perp ij}|$. Let $q_{\perp,\max} = \max_{i,j} |q_{\perp ij}|$, then
\begin{align}
\label{eq: po2_eq2}
    \frac{1}{n!}\sum_{\s} \Big | \langle \q_{\perp},\s \rangle \Big |& \geq  \frac{1}{n}\times q_{\perp,\max} \nonumber\\
     & \geq \frac{1}{n^2} \max_{\mathbf{s}} \langle \q_{\perp} , \mathbf{s} \rangle \allowdisplaybreaks \nonumber\\
    & = \frac{1}{n^2}  \max_{\mathbf{s}} \langle \q - \q_{\|} , \mathbf{s} \rangle \allowdisplaybreaks \nonumber\\
    & \stackrel{(a)} = \frac{1}{n^2}  \max_{\mathbf{s}} \langle \q , \mathbf{s} \rangle - \frac{1}{n^3} \langle \q,\mathbf{1} \rangle \allowdisplaybreaks \nonumber\\ 
    & \stackrel{(b)}{=} \frac{1}{n^2}  \max_{\mathbf{x} \in \mathcal{C}} \langle \q, \mathbf{x} \rangle - \frac{1}{n^3} \langle \q,\mathbf{1} \rangle  \allowdisplaybreaks \nonumber\\
    & \stackrel{(c)}{\geq} \frac{1}{n^2} \Big  \langle \q , \frac{1}{n} \mathbf{1} + \frac{\q_{\perp \mathcal{K}}}{n \norm{\q_{\perp \mathcal{K}}}} \Big \rangle - \frac{1}{n^3} \langle \q,\mathbf{1} \rangle  \allowdisplaybreaks \nonumber\\
    & = \frac{1}{n^2} \Big \langle \q , \frac{\q_{\perp \mathcal{K}}}{n \norm{\q_{\perp \mathcal{K}}}} \Big \rangle \allowdisplaybreaks \nonumber\\
    & =\cblue \frac{1}{n^2} \Big \langle \q_{\| \mathcal{K}} +\q_{\perp \mathcal{K}}, \frac{\q_{\perp \mathcal{K}}}{n \norm{\q_{\perp \mathcal{K}}}} \Big \rangle \allowdisplaybreaks \nonumber\\
    & \stackrel{(d)}{=}\cblue \frac{1}{n^2} \Big \langle \q_{\perp \mathcal{K}}, \frac{\q_{\perp \mathcal{K}}}{n \norm{\q_{\perp \mathcal{K}}}} \Big \rangle \allowdisplaybreaks \nonumber\\
    & = \frac{1}{n^3}  \norm{\q_{\perp \mathcal{K}}},
\end{align}
where (a) follows by using Lemma \ref{lemma: projection_space} part (iii), (b) follows because $\s$ lies in the set $\mathcal X$, which are also the extreme points of the polytope $\mathcal{C}$ by Birkhoff-von Neumann theorem, (c) follows from Lemma \ref{lemma: projection_cone} part (iv) by choosing $\mathbf{\nu} = \frac{1}{n} \mathbf{1}$, \cblue and (d) follows because $\langle \q_{\| \mathcal{K}},\q_{\perp \mathcal{K}} \rangle = 0$ by Lemma \ref{lemma: projection_cone} part (i) . Now, by substituting Eq. \eqref{eq: po2_eq2} in Eq. \eqref{eq: po2_eq1},
\begin{equation*}
     \mathbb{E} \big[\Delta_{Po2} \big|\q \big] \geq \frac{1}{2n^3}  \norm{\q_{\perp \mathcal{K}}}.
\end{equation*}
This completes the proof of Lemma \ref{lemma: expectation_bound} for power-of-$d$.
\end{proof}



\subsection{Appendix F}
\begin{proof}[Proof of Lemma \ref{lemma: expectation_bound} for random $d$-flip]

Similar to power-of-$d$, for random $d$-flip also, the expected weight of the schedule increases as the value of $d$ increases. So, without loss of generality, we provide the proof only for random $1$-flip. \cblue Similar to power-of-$d$, one can do a more complicated analysis to find a better bound for random $d$-flip, but for the results mentioned in this paper, we do not require such a bound. So, we only find the bound for $d=1$ and use it for all values of $d$. case For simplicity, we drop the index $t$ in the proof. \cblack

Suppose the indices chosen for the flip step are $(i,j)$ and $(k,l)$, then the weight of the schedule chosen by random $1$-flip scheduling is given by
	\begin{align*}
	    \langle \q,\s \rangle &= \max_{\s_1,\s_2} \{\langle \q,\s_1 \rangle, \langle \q,\s_2 \rangle  \}\\
	    & = \langle \q,\s_1 \rangle + \max_{\s_1,\s_2} \{0, \langle \q,\s_2 \rangle -\langle \q,\s_1 \rangle  \}\\ 
	    & =  \langle \q,\s_1 \rangle + \max\{0, q_{il}+ q_{kj} - q_{ij} - q_{kl}\},
	\end{align*}
	and $\mathbb{E} [\langle \q,\s_1 \rangle] = \frac{1}{n} \langle \q,\mathbf 1 \rangle$ as $\s_1$ is generated using random sampling. Let $\Delta_{flip}$ be the improvement in the weight (as compared to random scheduling) by doing random $1$-flip. Then 
	\begin{equation*}
		\Delta_{flip} = \max\{0, q_{il}+ q_{kj} - q_{ij} - q_{kl}\}.
	\end{equation*}
	Note that as the schedule $\s_1$ and the indices $(i,j)$ and $(k,l)$ for the flip step are chosen uniformly at random, any pair of indices has equal probability of getting selected. So, the probability of selecting $(i,j)$ and $(k,l)$ such that $ i\neq k$ and $l\neq j$ is $1/n^2(n-1)^2$. \cblue This is because there are $n$ possibilities for both $i$ and $j$, and there are $n-1$ possibilities for both $k$ and $l$ such that $k\neq i$ and $l\neq j$. \cblack Then,
	\begin{align}
	\label{eq: flip_eq1}
		\mathbb{E}[ & \Delta_{flip}|\q] \nonumber\\
		&= \frac{1}{n^2(n-1)^2} \sum_{ij} \sum_{ k\neq i} \sum_{l \neq j} \max\{0, q_{il}+ q_{kj} - q_{ij} - q_{kl}\}\allowdisplaybreaks \nonumber\\
		& \geq \frac{1}{n^4}\sum_{ij}  \sum_{ k\neq i} \sum_{l \neq j}  \max\{0, q_{il}+ q_{kj} - q_{ij} - q_{kl}\}\allowdisplaybreaks \nonumber\\
		&=\frac{1}{2n^4} \sum_{ij}  \sum_{ k\neq i} \sum_{l \neq j}  q_{il}+ q_{kj} - q_{ij} - q_{kl}\allowdisplaybreaks \nonumber\\
		& \ + \frac{1}{2n^4} \sum_{ij}  \sum_{ k\neq i} \sum_{l \neq j}  |q_{il}+ q_{kj} - q_{ij} - q_{kl}|\allowdisplaybreaks \nonumber\\
		&\stackrel{(a)}{=} \frac{1}{2n^4} \sum_{ij}  \sum_{ k\neq i} \sum_{l \neq j}  | q_{ij} + q_{kl}-q_{il}- q_{kj}| \allowdisplaybreaks  \nonumber \\
		&\geq \frac{1}{2n^4} \sum_{ij} \big| \sum_{ k\neq i} \sum_{l \neq j}   q_{ij} + q_{kl}-q_{il}- q_{kj} \big|,
	\end{align}
	 \cblue where (a) holds by using following calculation, \color{black}
	\begin{align*}
		 \sum_{ k\neq i} \sum_{l \neq j}  q_{ij} & =  (n-1)^2 q_{ij}.\allowdisplaybreaks\\
		 \sum_{ k\neq i} \sum_{l \neq j} q_{kl} & =  \sum_{kl} q_{kl} - \sum_{k} q_{kj} - \sum_{l} q_{il} + q_{ij}.\allowdisplaybreaks\\
		 \sum_{ k\neq i} \sum_{l \neq j}  q_{il} & =  (n-1)(\sum_{l} q_{il} - q_{ij}).\allowdisplaybreaks\\
		 \sum_{ k\neq i} \sum_{l \neq j}  q_{kj} & =  (n-1)(\sum_{k} q_{kj} - q_{ij}).
	\end{align*} 
	By combining the above equations, we get that,
	\begin{align*}
		 \sum_{ k\neq i} & \sum_{l \neq j}  q_{ij} + q_{kl}-q_{il}- q_{kj} \\
		&= n^2(q_{ij} - \frac{1}{n} \sum_{l} q_{il} - \frac{1}{n} \sum_{k} q_{kj} + \frac{1}{n^2} \sum_{kl} q_{kl} ),
	\end{align*}
	\cblue
	then by adding over $(i,j)$,
	\begin{equation*}
	  \sum_{ij}  \sum_{ k\neq i} \sum_{l \neq j}  q_{ij} + q_{kl}-q_{il}- q_{kj} =0.
	\end{equation*}
	Also, by using Lemma \ref{lemma: projection_space} part (i),
	\begin{align*}
		 \sum_{ k\neq i} & \sum_{l \neq j}  q_{ij} + q_{kl}-q_{il}- q_{kj} \\
		&= n^2(q_{ij} - \frac{1}{n} \sum_{l} q_{il} - \frac{1}{n} \sum_{k} q_{kj} + \frac{1}{n^2} \sum_{kl} q_{kl} )  \allowdisplaybreaks\\
		& = n^2 q_{\perp ij},
	\end{align*}
	\color{black}
	Substituting this in Eq. \eqref{eq: flip_eq1} gives us,
	\begin{align*}
		\mathbb{E}[\Delta_{flip}|\q] &\geq \frac{1}{2n^2} \sum_{ij} |q_{\perp ij} | \allowdisplaybreaks \\
		& \geq \frac{1}{2n^2} \max_{\s} \langle \q_{\perp} , \s \rangle   \stackrel{(a)} \geq  \frac{1}{2n^3} \norm{\q_{\perp \mathcal{K}}},
	\end{align*}
	where (a) follows in similar way as Eq. \eqref{eq: po2_eq2}. This completes the proof of Lemma \ref{lemma: expectation_bound} for random $d$-flip.
	\end{proof}


\subsection{Appendix G}
\label{appendix: class_2}

\begin{proof}[Proof of Theorem \ref{thm: ssc_class_2}]
For a scheduling algorithm in class $\Pi_2$, consider the underlying Markov chain $X(t)$. By definition of class $\Pi_2$,  the Markov chain $X(t)$ follows condition A.1 and A.2. Recall that by condition A.2, there exists $g(\cdot)$ such that $\q(t) = g(X(t))$ for all $t\geq 0$. Take $\q = g(X)$.
\cblue By Definition \ref{Def:class_2}, 
\begin{equation*}
    	\mathbb{E} \big[ \langle \q(t),\s(t) \rangle \big| X(t) = X \big] \geq \max_{\s} \langle \q,\s \rangle - W_2.
\end{equation*}
By substituting this in part (i) of Lemma \ref{lemma: drift_results}, for the Lyapunov function $V(X)$ defined in Lemma \ref{lemma: drift_results}, \color{black}
\begin{align}
\label{eq: apx_thm_eq}
    \mathbb{E} \big[ & \Delta V(X) \big| X(t)=X  \big] \nonumber\\
     & \leq K  + 2(1-\epsilon)\langle \q , \boldsymbol \nu \rangle - 2 \mathbb{E} \big[ \langle \q(t),\s(t) \rangle \big| X(t) = X \big] \nonumber \allowdisplaybreaks \\
   & \stackrel{(a)} \leq  K  + 2(1-\epsilon)\langle \q ,\boldsymbol  \nu \rangle - 2\max_{s} \langle \q , \s \rangle + 2W_2 \nonumber \allowdisplaybreaks\\
   &\cblue \stackrel{(b)} \leq  K'  + 2(1-\epsilon)\langle \q ,\boldsymbol  \nu \rangle - 2\max_{s} \langle \q , \s \rangle \nonumber \allowdisplaybreaks\\
   &\stackrel{(c)} \leq K' - 2\epsilon \langle \q ,\boldsymbol  \nu \rangle - 2\nu_{\min}\norm{\q_{\perp \mathcal{K}}},
\end{align}
 where (a) follows from the definition of class $\Pi_2$, (b) follows by taking  $K' =K+2W_2$ and (c) follows by using Lemma \ref{lemma: projection_cone} part (iv). In order to prove the stability of algorithms in class $\Pi_2$, we use $\langle \q ,\boldsymbol  \nu \rangle \geq \nu_{\min} \langle \q , \mathbf 1 \rangle$ to get 
 \begin{align*}
 	\mathbb{E} \big[  \Delta V(X)\big| X(t)=X \big] & \leq K' - 2\epsilon \nu_{\min} \langle \q , \mathbf 1 \rangle\\
 	& \stackrel{(a)} \leq - \epsilon \nu_{\min} \langle \q , \mathbf 1 \rangle, 
 \end{align*}
 where $(a)$ holds whenever $\langle \q , \mathbf 1 \rangle \geq  K' / \epsilon \nu_{\min}$.  
 It is easy to observe that the set of $\q$ for which $\langle \q , \mathbf 1 \rangle <  K' / \epsilon \nu_{\min}$ forms a finite set whenever $\nu_{\min}>0$. Then, by condition A.2, the set of $X$ for which $\langle \q , \mathbf 1 \rangle <  K' / \epsilon \nu_{\min}$ is also finite.  Thus, by Foster-Lyapunov theorem, we get that the Markov chain $X(t)$ is positive recurrent and so the system is stable. 
 
 Now we prove that the scheduling algorithm in class $\Pi_2$ satisfies SSC. \cblue We have to show that all the moments of $\norm{\q_{\perp \calk}}$ is bounded by constant that is independent of $\epsilon$, which we show by using part (i) of Lemma \ref{lemma: moment_bound}. \cblack Note that the condition C.2 for the Lyapunov function $W_{\perp \mathcal{K}} (\cdot)$ is directly satisfied by using Lemma \ref{lemma: drift_results} part (iii). So we just need to prove that $W_{\perp \mathcal{K}} (\cdot)$ satisfies condition C.1. From Lemma \ref{lemma: drift_results} part (ii),
 \begin{align*}
    \mathbb{E} \big[ & \Delta V_{\| \mathcal{K}}(X)\big| X(t)=X \big] \allowdisplaybreaks \geq -2\epsilon \langle \q_{\| \mathcal{K}} ,\boldsymbol  \nu\rangle.
\end{align*}
By substituting this and Eq. \eqref{eq: apx_thm_eq} in Lemma \ref{lemma: drift_results} part (iv),
\begin{align*}
    \mathbb{E} \big[ & \Delta W_{\perp \mathcal{K}} (X)\big| X(t)=X \big] \\
    &\leq \frac{1}{2\|\q_{\perp \mathcal{K}}\|} \Big ( K' - 2\epsilon\langle \q_{\perp \mathcal{K}} , \boldsymbol \nu \rangle  - 2\nu_{\min} \norm{\q_{\perp \mathcal{K}}}  \Big )\\
    &\cblue\leq \frac{1}{2\|\q_{\perp \mathcal{K}}\|} \Big ( K' + 2\epsilon |\langle \q_{\perp \mathcal{K}} ,\boldsymbol  \nu \rangle|  - 2\nu_{\min} \norm{\q_{\perp \mathcal{K}}}  \Big )\color{black}\\
    &\stackrel{(a)}\leq \frac{K'}{2\|\q_{\perp \mathcal{K}}\|} + \epsilon \norm{\boldsymbol \nu} - \nu_{\min}\\
    &\stackrel{(b)} \leq - \frac{\nu_{\min}}{4}, 
\end{align*}
where (a) follows by the using Cauchy-Schwarz inequality, (b) follows whenever $\epsilon \leq \epsilon_0 = \nu_{\min}/2\norm{\boldsymbol \nu}$ and $\|\q_{\perp \mathcal{K}}\| \geq 2K'/\nu_{\min}$. This fulfils the condition C.1 with $\kappa  = 2K'/\nu_{\min}$ and $\eta = \nu_{\min}/4$. Then, by Lemma \ref{lemma: moment_bound} part (i), $\mathbb{E} \big[\norm{\Bar{\q}_{\perp \mathcal{K}}}^r \big] < C_r < \infty$, where $C_r$ can be derived by substituting the values of $\kappa$, $\eta$ and $D$ in Eq. \eqref{eq: part1_moment_bound}, \cblue where $\kappa$, $\eta$ and $D$ are independent of $\epsilon$. \cblack This proves that any scheduling algorithm in class $\Pi_2$ satisfies Eq. \eqref{eq: qperp_bound}.
\end{proof}

\subsection{Appendix H}
\label{appendix: markov_chain}

\begin{proof}[Proof of condition A.1 and A.2 for bursty MaxWeight]
    Let $m(t)$ be the counter that denotes the number of time slots since the system calculated the MaxWeight schedule. Then, the corresponding Markov chain is $X(t) =\big(\q(t), \s(t),m(t) \big) $. One issue with bursty MaxWeight is that $X(t)$ is periodic with period $m$. \cblue This happens because the counter $m(t)$ resets after exactly $m$ time slots. Thus the counter $m(t)$ is periodic with period $m$, which in turn makes $X(t)$ periodic with period $m$. \color{black} To make it aperiodic, we consider a slight modification. We assume that if $\q(t) = \mathbf{0}$, then the counter $m(t)$ also becomes $0$. 

	 Then, the Markov chain $X(t)$ is aperiodic because the state $X = (\mathbf 0, \mathbf I, 0)$ has a self loop \cblue by using Assumption \ref{assu: arrival_process} part (iv) and the fact that $m(t) = 0$ if $\q(t) =\mathbf 0$. \cblack The irreducibility of $X(t)$ follows by considering the communicating class of $X = (\mathbf 0, \mathbf I, 0)$ and using similar arguments as in \cite[Exercise 4.2]{srikant2014communication}. This means $X(t)$ satisfies condition A.1. Now, condition A.2 holds simply because $\s(t)$ and $ m(t)$ can take only a finite number of values. Thus the Markov chain $X(t)$ satisfy condition A.1 and A.2. 
\end{proof}

\begin{proof}[Proof of condition A.1 and A.2 for Pipelined MaxWeight]
    The corresponding Markov chain for pipelined MaxWeight is $X(t) = \big(\q(t), \q(t-m) \big)$, where $ \q(t-m) = \q(0)$ for any $t <m$. The Markov chain $X(t)$ is aperiodic because the state $X = (\mathbf 0, \mathbf 0)$ has a self loop by using Assumption \ref{assu: arrival_process} part (iv). Irreducibility follows by using similar arguments as in \cite[Exercise 4.2]{srikant2014communication}. This gives us that $X(t)$ satisfies condition A.1. Now, note that for any $(i,j)$, and for any $t$,
 \begin{equation*}
 	q_{ij}(t) - n a_{\max} \leq q_{ij}(t-m) \leq q_{ij}(t) +n.
 \end{equation*}
\cblue From the above expression, we observe that if $\q(t)$ lies in a finite set, then $\q(t-m)$ also lies in a finite set, and the condition A.2 is satisfied. \cblack
 Thus the Markov chain $X(t)$ satisfy condition A.1 and A.2. 
\end{proof}

\subsection{Appendix I}
\label{appendix: lemma_tau_dominate}

\begin{proof}[Proof of Lemma \ref{prop:tau_dominate}]
For any $Y$, we have that
\begin{align*}
	\pr (& \tau_k > c | Y_k = Y) \\
	&= \pr (\tau_k > c-1 | Y_k = Y)\pr (\tau_k > c | \tau_k > c-1 , Y_k = Y).
\end{align*}
Let $A_t$ be the event that 
\begin{equation*}
	\langle \q(t), \s(t) \rangle \neq \max_{\s} \langle \q(t) , \s \rangle.
\end{equation*}
Then, by Definition \ref{Def: class_3} part (i), \cblue we can condition on the possible values of $T_k$ to get \cblack
\begin{align*}
	& \pr( \tau_k > c | \tau_k > c-1 , Y_k = Y) \nonumber \\
	& = \sum_{T\geq 0} \pr(T_k = T) \pr(A_{T+c} |\tau_k > c-1 , Y_k = Y, T_k =T),
\end{align*}
\cblue where the above equality holds because for $\tau_k > c$, we need that $t_k > c-1$ and the MaxWeight schedule is not used at time $T_k +c$. \cblack Now,
\begin{align}
	\label{eq: tau_1}
	\pr( & A_{T+c} |\tau_k > c-1 , Y_k = Y, T_k =T) \nonumber \\
	&= \ex \big[ \cali (A_{T+c}) \big|\tau_k > c-1 , Y_k = Y, T_k =T \big]\nonumber\\
	&\stackrel{(a)} = \ex \Big[ \ex \big[ \cali (A_{T+c}) \big| \mathcal H_{T+c} \big] \Big|\tau_k > c-1 , Y_k = Y, T_k =T \Big]\nonumber\\
	& \stackrel{(b)}\leq \ex \Big[ (1-\delta) \Big|\tau_k > c-1 , Y_k = Y, T_k =T \Big]\nonumber \\
	& = (1-\delta).
\end{align}
where (a) follows because the event $\{\tau_k > c-1 , Y_k = Y, T_k =T\}$ is a function of $\mathcal H_{T+c}$ \cblue (Recall that $\mathcal{H}_t$ is the history till $t$ as given in Eq. \eqref{eq: filtration_history}) \cblack and (b) follows by using part (i) of Definition \ref{Def: class_3}. 
Thus,
\begin{align*}
	\pr ( & \tau_k > c | Y_k = Y)\\ 
	&\leq \pr (\tau_k > c-1 | Y_k = Y) \times (1-\delta) \sum_{T\geq 0} \pr(T_k = T) \\
	& = \pr (\tau_k > c-1 | Y_k = Y) \times (1-\delta).
\end{align*}
Now, by using the above relation iteratively, we get that,
\begin{align*}
	\pr(\tau_k > c | Y_k=Y) \leq (1-\delta)^c = \pr(M >c),
\end{align*}
where $M$ is a geometric random variable with mean $1/\delta$.
\end{proof}

\subsection{Appendix J}
\begin{proof}
As mentioned before the stability of algorithms in class $\Pi_3$ follows from the arguments in \cite{665071}. So, we just prove that algorithms in class $\Pi_3$ achieves SSC. We redefine the Lyapunov functions as follows
	 \begin{align*}
	 V(Y) = \norm{\q}^2 && V_{\| \mathcal{K}}(Y)  =  \norm{ \q_{\| \mathcal{K}}}^2 && W_{\perp \calk}(Y) = \norm{\q_{\perp \calk}}.
	 \end{align*}
First, we prove that the drift of Lyapunov function $\Delta W_{\perp \calk}(Y)$ satisfy the condition C.3,
	\begin{align*}
		\big| & \Delta  W_{\perp \calk}(Y) \big| \allowdisplaybreaks \\
		 &= \big| \norm{\q_{\perp \calk}(T_{k+1})} - \norm{\q_{\perp \calk}(T_{k})} \big| \cali (Y_k = Y) \allowdisplaybreaks \\
		 & \leq \big| \norm{\q_{\perp \calk}(T_{k+1})} - \norm{\q_{\perp \calk}(T_{k})} \big| \allowdisplaybreaks \\
		& = \Big| \sum_{i=T_k}^{T_{k+1}-1} \norm{\q_{\perp \calk}(i+1)} - \norm{\q_{\perp \calk}(i)} \Big| \allowdisplaybreaks\\
		&\leq \sum_{i=T_k}^{T_{t+1}-1}  \Big| \norm{\q_{\perp \calk}(i+1)} - \norm{\q_{\perp \calk}(i)} \Big| \allowdisplaybreaks \\
		& \stackrel{(a)}\leq \sum_{i=T_k}^{T_{t+1}-1} n a_{\max}\\
		& = na_{\max} \tau_k,
	\end{align*}
	where (a) follows from Lemma \ref{lemma: drift_results} part (iii). Let $M$ be a geometric random variable with mean $1/\delta$. From Lemma \ref{prop:tau_dominate}, we know that $\tau_k$ is stochastically dominated by the random variable $M$. This implies that $|\Delta W_{\perp \calk}(Y) |$ is stochastically dominated by random variable $na_{\max}M$. 	Also, by the  property of geometric random variable, the Moment Generating Function (MGF) of $na_{\max}M$ exists and is finite. This proves that Lyapunov function $\Delta W_{\perp \calk}(Y)$ satisfy the condition C.3. Next we look at the drift equations. The process $\q(T_k)$ evolves according to following equation,
	 \begin{equation*}
	 	\q(T_{k+1}) = \q(T_{k}) + \sum_{i=0}^{\tau_k -1} \Big( \ar(T_k+i) -\s(T_k+i) + \mathbf{u}(T_k+i)\Big)
	 \end{equation*}
	 	 This gives us that,
	 \begin{align}
	 \label{eq: q_expand}
	 	\| & \q(T_{k+1}) \|^2 \nonumber\\
	 	 = & \norm{ \q(T_{k}) + \sum_{i=0}^{\tau_k -1} \Big( \ar(T_k+i) -\s(T_k+i)\Big)}^2  \allowdisplaybreaks \nonumber \\
	 	 & + \norm{\sum_{i=0}^{\tau_k -1} \mathbf{u}(T_k+i)}^2 \allowdisplaybreaks \nonumber\\
	 	& + 2 \Big \langle \sum_{i=0}^{\tau_k -1} \mathbf{u}(T_k+i), \q(T_{k+1}) - \sum_{i=0}^{\tau_k -1} \mathbf{u}(T_k+i)\Big \rangle \allowdisplaybreaks\nonumber\\
	 	\leq & \norm{ \q(T_{k}) + \sum_{i=0}^{\tau_k -1} \Big( \ar(T_k+i) -\s(T_k+i)\Big)}^2 \allowdisplaybreaks\nonumber\textbf{}\\
	 	&+ 2 \sum_{i=0}^{\tau_k -1} \Big \langle  \mathbf{u}(T_k+i), \q(T_{k+1}) \Big \rangle \allowdisplaybreaks\nonumber\\
	 	\leq & \norm{ \q(T_{k})}^2 + \norm{ \sum_{i=0}^{\tau_k -1} \ar(T_k+i)}^2 + \norm{ \sum_{i=0}^{\tau_k -1} \s(T_k+i)}^2\allowdisplaybreaks\nonumber\\
	 	&+ 2 \Big \langle \q(T_k) , \sum_{i=0}^{\tau_k -1} \Big( \ar(T_k+i) -\s(T_k+i)\Big) \Big \rangle \allowdisplaybreaks\nonumber \\
	 	&+ 2 \sum_{i=0}^{\tau_k -1} \Big \langle  \mathbf{u}(T_k+i), \q(T_{k+1}) \Big \rangle
	 \end{align}
	 
	 Now we provide the a bound for the expected value of each of the term above. Then,
\begin{align*}
	 	\ex \big[ \langle \q & (T_k+i+2),\mathbf{u}(T_k+i)\rangle \big| Y_k =Y, \tau_k \big]\\
	 	\leq & \cblue \ex \big[ \langle \q(T_k+i+1),\mathbf{u}(T_k+i)\rangle \big| Y_k=Y, \tau_k \big]\\
	 	&\cblue+ \ex\big[ \langle \mathbf{a}(T_k+i+1),\mathbf{u}(T_k+i)\rangle \big| Y_k =Y, \tau_k \big]\\
	 	&\cblue- \ex\big[ \langle \mathbf{s}(T_k+i+1),\mathbf{u}(T_k+i)\rangle \big| Y_k =Y, \tau_k \big]\\
	 	&\cblue+ \ex\big[ \langle \mathbf{u}(T_k+i+1),\mathbf{u}(T_k+i)\rangle \big| Y_k =Y, \tau_k \big]\\
	 \stackrel{(a)}{\leq} & \ex \big[ \langle \ar(T_k+i+1),\mathbf{u}(T_k+i)\rangle \big| Y_k=Y, \tau_k \big]\\
	 	&+ \ex\big[ \langle \mathbf{u}(T_k+i+1),\mathbf{u}(T_k+i)\rangle \big| Y_k =Y, \tau_k \big]\\
	 	\leq & \cblue \sum_{ij}a_{\max}u_{ij}(T_k+i+1)  + \sum_{ij}u_{ij}(T_k+i+1) \\
	 	\leq  & na_{\max} + n\\
		\leq & 2na_{\max},
	 \end{align*}
	 \cblue where (a) holds because $\langle \q(t+1),\mathbf{u}(t)\rangle = 0 $ and $\langle \s(t+1),\mathbf{u}(t)\rangle = 0 $. \cblack Using this iteratively, we get that
	 \begin{equation*}
	 	\ex[  \langle \q(T_{k+1}),\mathbf{u}(T_k+i)\rangle | Y_k = Y, \tau_k ] \leq 2na_{\max} \times (\tau_k - i -1).
	 \end{equation*}
	 	 Thus,
	 \begin{align}
	 \label{eq: u_q_inner}
	 	\ex \Big[ & \sum_{i=0}^{\tau_k -1} \Big \langle  \mathbf{u}(T_k+i), \q(T_{k+1}) \Big \rangle \Big| Y_k=Y \Big] \nonumber \\
	 	&= \ex \Big[ \sum_{i=0}^{\tau_k -1} \ex \big[ \langle  \mathbf{u}(T_k+i), \q(T_{k+1}) \rangle \big| Y_k=Y, \tau_k \big] \Big| Y_k =Y\Big] \nonumber\\
	 	& \leq  \ex \Big[ \sum_{i=0}^{\tau_k -1} 2na_{\max} \times (\tau_k - i -1) \Big| Y_k=Y \Big]\nonumber \\
	 	&\leq  \frac{2na_{\max}}{2}\ex [\tau_k^2 | Y_k=Y]\nonumber\\
	 	&\leq  \frac{2na_{\max}}{\delta^2},
	 \end{align}
	 \cblue where the last inequality holds because of Lemma \ref{prop:tau_dominate}. \cblack
	 Next, we use the use the fact that arrivals are bounded to get the following result,
	 	\begin{align}
	 	\label{eq: norm_ar}
	 	\ex \Big[ & \norm{ \sum_{i=0}^{\tau_k -1} \ar(T_k+i)}^2 \Big| Y_k=Y \Big] \nonumber\\
	 	& = \ex \Big[ \sum_{i=0}^{\tau_k -1} \sum_{j=0}^{\tau_k -1} \big \langle \ar(T_k+i) , \ar(T_k+j) \big \rangle \Big | Y_k =Y\Big]\nonumber\\
	 	& \stackrel{(a)} \leq n^2 a_{\max}^2 \ex [\tau_k^2 |Y_k=Y]\nonumber\\
	 	& \leq \frac{2n^2 a_{\max}^2}{\delta^2},
\end{align}
where (a) follows from the fact that for any $t,t'$, $ \langle \ar(t),\ar(t') \rangle \leq n^2 a_{\max}^2 $.

By using the fact that the schedules are permutation matrices, 
	 \begin{align}
	  \label{eq: norm_sur}
	 	\mathbb{E} \Big[ & \norm{ \sum_{i=0}^{\tau_k -1} \s(T_k+i)}^2 \Big| Y_k=Y \Big] \nonumber\\
	 	& = \ex \Big[ \sum_{i=0}^{\tau_k -1} \sum_{j=0}^{\tau_k -1} \big \langle \s(T_k+i) , \s(T_k+j) \big \rangle \Big | Y_k=Y \Big]\nonumber\\
	 	& \stackrel{(a)} \leq n \ex [\tau_k^2 |Y_k=Y]\nonumber\\
	 	& \leq \frac{2n}{\delta^2},
	\end{align}
	where (a) follows from the fact that for any $t,t'$, $ \langle \s(t),\s(t') \rangle \leq n $.
Now, by $Y_k = Y$ and $(\q,\s) = f(Y)$, we get that 
	 \begin{align}
	 \label{eq: a_q_dot}
	 	\ex \Big[ & \sum_{i=0}^{\tau_k -1} \Big \langle  \ar(T_k+i), \q(T_{k}) \Big \rangle \Big| Y_k =Y\Big] \allowdisplaybreaks \nonumber \\
	 	&=\ex \Big[ \sum_{i=0}^{\tau_k -1} \Big \langle  \ar(T_k+i), \q \Big \rangle \Big| Y_k =Y \Big] \allowdisplaybreaks \nonumber\\
	 	& \stackrel{(a)} = \langle \q, \boldsymbol \lambda \rangle \times \ex[\tau_k | Y_k =Y ],
	 \end{align}
	 where (a) follows because for any $m$, the event $\{\tau_k = m\}$ does not depend on $\{\ar(T_k +i)\}_{i\geq m}$, and so we can use the general form of Wald's equation \cite{blackwell1946equation}.

By Assumption \ref{assu: arrival_process}, we know that for any $(i,j)$, $a_{ij} (t) \leq a_{\max}$ and so $q_{ij}(t+1) \leq q_{ij}(t) +a_{\max}$. Thus, for any $k$, $q_{ij}(t+k) \leq q_{ij}(t) +ka_{\max}$. This gives us that
\begin{align}
\label{eq: next_eq}
	\ex \big[ & \langle \q(T_k),  \s(T_k +i ) \rangle \big|Y_k  =Y  \big] \nonumber\\
	  & \geq \ex \big[ \langle \q(T_k+i), \s(T_k +i ) \rangle \big| Y_k  =Y  \big]   - i\times na_{\max}.
\end{align}	
	 Also, as the scheduling algorithm lies in class $\Pi_3$, 
	 \begin{align}
	 \label{eq: class3_1}
	 	\ex \big[ \langle \q & (T_k+i), \s(T_k +i ) \rangle \big|Y_k  =Y  \big] \allowdisplaybreaks \nonumber\\
	 	\stackrel{(a)}\geq & \ex \big[ \langle \q(T_k+i), \s(T_k +i - 1)\rangle  \big|Y_k  =Y  \big] \allowdisplaybreaks \nonumber\\
	 	 \stackrel{(b)} \geq & \ex \big[ \langle \q(T_k+i-1), \s(T_k +i - 1)\rangle  \big|Y_k  =Y  \big] - n  \allowdisplaybreaks \nonumber\\
	 	 \stackrel{(c)} \geq &\ex \big[ \langle \q(T_k), \s(T_k)\rangle  \big|Y_k  =Y  \big] - i\times n.
	 \end{align}
where (a) follows by Eq. \eqref{eq: comparison} and (b) follows because for all $t \geq 0$, $q_{ij}(t+1) \geq q_{ij}(t) - s_{ij}(t) \geq q_{ij}(t) - 1$. Finally, (c) follows by using the inequality in (b) repeatatively.
By combining Eq. \eqref{eq: class3_1} with Eq. \eqref{eq: next_eq}, we get,
	 \begin{align*}
	 	\ex  \big[\langle & \q(T_k), \s(T_k +i ) \rangle \big|Y_k  =Y  \big]\\
	 	 &\geq \ex \big[ \langle \q(T_k), \s(T_k) \rangle \big|Y_k   =Y \big] - i \times n (1+a_{\max}).
	 \end{align*}
This gives us that, 
	 \begin{align}
	 \label{eq: s_q_dot}
	 	\ex \Big[ & \sum_{i=0}^{\tau_k -1} \Big \langle   \q(T_{k}),\s(T_k+i) \Big \rangle \Big| Y_k = Y\Big] \allowdisplaybreaks \nonumber \\
	 	&\geq\ex \Big[ \sum_{i=0}^{\tau_k -1} \Big( \langle   \q(T_k), \s(T_k) \rangle - i\times n(1+a_{\max}) \Big)  \Big| Y_k= Y \Big] \allowdisplaybreaks\nonumber \\
	 	&\geq  \langle \q, \s \rangle \times  \ex[\tau_k | Y_k =Y] - \frac{n(1+a_{\max})}{2} \times \ex [\tau_k^2 | Y_k =Y]  \allowdisplaybreaks\nonumber\\
	 	&  \geq \langle \q, \s \rangle \times \ex[\tau_k | Y_k =Y] - \frac{n(1+a_{\max})}{\delta^2},
	 \end{align}
By putting Eq. (\ref{eq: u_q_inner}), (\ref{eq: norm_ar}), (\ref{eq: norm_sur}), (\ref{eq: a_q_dot}) and (\ref{eq: s_q_dot}) in Eq. (\ref{eq: q_expand}), we get that the drift of $V(Y_k)$, when $Y_k = Y$, is given by,
	  \begin{align*}
	 	\ex \big[ \Delta & V(Y) \big| Y_k =Y \big] \\
	   \stackrel{(a)}	\leq & K+ 2 \left( \langle \q,\boldsymbol  \lambda \rangle - \langle \q, \s \rangle \right) \times \ex[\tau_k | Y_k =Y]\\
	   \stackrel{(b)}	\leq & K -2 \left( \epsilon \langle \q,\boldsymbol  \nu \rangle + \nu_{\min}\norm{\q_{\perp \calk}}  \right) \times \ex[\tau_k | Y_k =Y]\\
	    \stackrel{(c)}	\leq & K - 2\epsilon \langle \q,\boldsymbol  \nu \rangle  \times \ex[\tau_k | Y_k =Y] - 2\nu_{\min}\norm{\q_{\perp \calk}}  
	 \end{align*}
	 where (a) follows by taking
	 \begin{equation*}
	 	K = \frac{2n^2a_{\max}^2+ 6na_{\max} +4n}{\delta^2},
	  \end{equation*}
	  (b) follows by taking $\boldsymbol \lambda = (1-\epsilon) \nu$ and by construction of Markov chain $Y_k$, $\s$ is the MaxWeight schedule with respect to $\q$, so we can use Lemma \ref{lemma: projection_cone} part (iv). Finally, (c) follows because $\tau_k \geq 1$.
	  
	 Now for the drift of $V_{\| \calk}(Y_k)$, 
	 	 \begin{align*}
	 	\ex & \big[ \Delta V_{\| \calk}(Y) \big| Y(t) =Y \big] \allowdisplaybreaks \\
	 	& = \ex \big[ \norm{\q_{\| \calk}(T_{k+1})}^2 - \norm{\q_{\| \calk}(T_k)}^2 \big | Y_k =Y  \big] \allowdisplaybreaks \\
	 	&= \ex \big[ \norm{\q_{\| \calk}(T_{k+1}) - \q_{\| \calk}(T_k)}^2 \big | Y_k =Y \big] \allowdisplaybreaks \\
	 	& \qquad + 2 \ex \big[  \langle \q_{\| \calk}(T_k), \q_{\| \calk}(T_{k+1}) - \q_{\| \calk}(T_k) \rangle \big | Y_k =Y \big] \allowdisplaybreaks \\
	 	& \geq  2 \ex \big[  \langle \q_{\| \calk}(T_k), \q_{\| \calk}(T_{k+1}) - \q_{\| \calk}(T_k) \rangle \big | Y_k =Y \big] \allowdisplaybreaks \\
	 	& =  2 \ex \big[  \langle \q_{\| \calk}(T_k), \q(T_{k+1}) - \q(T_k) \rangle \big | Y_k =Y \big] \allowdisplaybreaks \\
	 	& \qquad - 2 \ex \big[  \langle \q_{\| \calk}(T_k), \q_{\perp \calk}(T_{k+1}) - \q_{\perp \calk}(T_k) \rangle \big | Y_k =Y \big] \allowdisplaybreaks \\
	 	&\stackrel{(a)} \geq 2 \ex \big[  \langle \q_{\| \calk}(T_k), \q(T_{k+1}) - \q(T_k) \rangle \big | Y_k =Y \big] \allowdisplaybreaks \\
	 	& = 2\ex \Big [ \Big \langle \q_{\| \calk}(T_k), \sum_{i = 0}^{\tau_k -1} \Big ( \ar(T_k+i) - \s(T_k+i) \Big ) \Big \rangle \Big | Y_k =Y \Big] \allowdisplaybreaks \\
	 	& \qquad + 2\ex \Big [ \Big \langle \q_{\| \calk}(T_k), \sum_{i = 0}^{\tau_k -1} \mathbf u(T_k +i) \Big \rangle \Big | Y_k =Y \Big] \allowdisplaybreaks \\
	 	& \stackrel{(b)} \geq 2\ex \Big [ \Big \langle \q_{\| \calk}(T_k), \sum_{i = 0}^{\tau_k -1} \Big ( \ar(T_k+i) - \s(T_k+i) \Big ) \Big \rangle \Big | Y_k =Y \Big] \allowdisplaybreaks \\
	 	& = 2\ex \Big [ \Big \langle \q_{\| \calk}(T_k), \sum_{i = 0}^{\tau_k -1} \big ( \ar(T_k+i) - \boldsymbol \nu \big ) \Big \rangle \Big | Y_k =Y \Big] \allowdisplaybreaks \\
	 	& \qquad -   2\ex \Big [ \Big \langle \q_{\| \calk}(T_k), \sum_{i = 0}^{\tau_k -1} \s(T_k+i) - \boldsymbol \nu \Big \rangle \Big | Y_k =Y \Big] \allowdisplaybreaks \\
	 	& \stackrel{(c)} = 2\ex \Big [ \Big \langle \q_{\| \calk}(T_k), \sum_{i = 0}^{\tau_k -1} \big ( \ar(T_k+i) -\boldsymbol  \nu \big ) \Big \rangle \Big | Y_k =Y \Big] \allowdisplaybreaks \\
	 	& \stackrel{(d)}=  -2\epsilon \langle \q_{\| \calk},\boldsymbol  \nu \rangle \times \ex[\tau_k | Y_k=Y] \allowdisplaybreaks
	 \end{align*}
where (a) follows because $\langle \q_{\| \calk}(T_k), \q_{\perp \calk}(T_k) \rangle = 0$ and $\langle \q_{\| \calk}(T_k), \q_{\perp \calk}(T_k +1) \rangle \leq 0$ by Lemma \ref{lemma: projection_cone} part (i) and (ii), (b) follows as projection to $\calk$ and unused service are both non-negative vectors, (c) follows by Lemma \ref{lemma: projection_cone} part (iii) as $\s(t),\boldsymbol \nu \in \mathcal F$ for any $t$ and (d) follows because the event $\{\tau_k = m\}$ does not depend on $\{\ar(T_k +i)\}_{i\geq m}$, and so we can use the general form of Wald's equation.

	 Then by using Lemma \ref{lemma: drift_results} part (iv),
	 \begin{align*}
    \mathbb{E} \big[ & \Delta W_{\perp \mathcal{K}}(Y) \big| Y_k = Y \big] \allowdisplaybreaks\\
    &\leq \frac{K}{2\|\q_{\perp \mathcal{K}}\|} - \frac{\epsilon}{\|\q_{\perp \mathcal{K}}\|}\langle \q_{\perp \calk}, \boldsymbol \nu \rangle \times \ex[\tau_k | Y_k =Y] - \nu_{\min} \allowdisplaybreaks \\
    & \cblue \leq \frac{K}{2\|\q_{\perp \mathcal{K}}\|} + \frac{\epsilon}{\|\q_{\perp \mathcal{K}}\|}|\langle \q_{\perp \calk}, \boldsymbol \nu \rangle| \times \ex[\tau_k | Y_k =Y] - \nu_{\min} \allowdisplaybreaks \\
    &\stackrel{(a)}\leq \frac{K}{2\|\q_{\perp \mathcal{K}}\|} + \epsilon \norm{\boldsymbol \nu} \times \ex[\tau_k | Y_k =Y] - \nu_{\min} \allowdisplaybreaks\\
    &\stackrel{(b)} \leq \frac{K}{2\|\q_{\perp \mathcal{K}}\|} + \frac{\epsilon}{\delta} \norm{\boldsymbol \nu} - \nu_{\min} \allowdisplaybreaks\\
    &\stackrel{(c)} \leq - \frac{\nu_{\min}}{4}, 
\end{align*}
where (a) follows by the Cauchy-Schwarz inequality, \cblue (b) follows by Lemma \ref{prop:tau_dominate} \cblack and (c) follows whenever $\epsilon \leq \epsilon_0 = \delta \nu_{\min} /2\norm{\boldsymbol \nu}$ and  $\|\q_{\perp \mathcal{K}}\| \geq 2K/\nu_{\min}$. This fulfils the condition C.1 with $\kappa  = 2K/\nu_{\min}$ and $\eta = \nu_{\min}/4$. Then, by using Lemma \ref{lemma: moment_bound} part (ii), the MGF of  $  W_{\perp \calk}(\overline Y) $ exists in a neighbourhood around 0.

Now suppose that time instant $t$ lies between the stopping times $T_k$ and $T_{k+1}$, then, 
\begin{align*}
	\norm{\q_{\perp \calk } (t)} &\leq \norm{\q_{\perp \calk } (T_k)} + na_{\max}(t-T_k)\\
	& \leq \norm{\q_{\perp \calk } (T_k)} + na_{\max}(T_{k+1}-T_k)\\
	& = W_{\perp \calk}( Y_k) + na_{\max}\tau_k.
\end{align*}
So, $\norm{\q_{\perp \calk } (t)}$ is stochastically dominated by $W_{\perp \calk}( Y_k) + na_{\max}\tau_k$, which in turn is stochastically dominated by $W_{\perp \calk}( Y_k) + na_{\max}M$ by Lemma \ref{prop:tau_dominate}. Then in steady state, $\norm{\bar \q_{\perp \calk } }$ is stochastically dominated by $W_{\perp \calk}( \overline Y) + na_{\max}M$. This implies that MGF of $\norm{\bar \q_{\perp \calk } }$ exists in a neighbourhood around 0 and so there exists $C_r<\infty$ such that $\ex \big [ \norm{\bar \q_{\perp \calk } }^r \big] < C_r$ for any $r \in \{1,2,\dots\}$. This establishes SSC and thus by Theorem \ref{prop: ssc}, the heavy traffic scaled mean sum queue length for any algorithm in class $\Pi_3$ satisfies Eq. \eqref{eq: heavy_traffic_sum}. This completes the proof.
\end{proof}

\subsection{Appendix K}

\begin{proof}[Proof of Proposition \ref{prop: beta_algos}]
	\cblue
	\begin{itemize}
	
	    \item[(i)] From the proof of Theorem \ref{thm: ssc_class_1} given in Appendix D, we know any scheduling algorithm in class $\Pi_1(\boldsymbol \nu)$ satisfies the condition C.1 and C.2 with parameters, $\kappa = 2K/W_1$, $\eta = W_1/4$ and $D = n a_{\max}$, where $K = \norm{\boldsymbol \lambda}^2 + \norm{\boldsymbol \sigma}^2 +n$. For uniform Bernoulli traffic, we know that $\norm{\boldsymbol \sigma}^2$ is $O(n)$ which gives us that $K$ is $O(n)$. Taking $W_1$ to be $O(n^{-\alpha_1})$, $\kappa$ is $O(n^{1+\alpha_1})$, $\eta$ is $O(n^{-\alpha_1})$ and $D$ is $O(n)$. Thus, by part (i) of Lemma \ref{lemma: moment_bound}, $\mathbb{E} \Big [\norm{\Bar{\q}_{\perp \mathcal{K}}}^r \Big]$ is $O(n^{r(2+\alpha_1)})$. Then by Lemma \ref{thm: beta_result}, we need $\beta > 3+ \alpha_1$.

	    For power-of-$d$ and random $d$-flip, we have that $W_1 = 1/2n^3$ and so $\alpha_1 = $
	    This gives us that power-of-$d$ and random $d$-flip satisfies Eq. \eqref{eq: beta_result} for $\beta >6$.
	
	
    \item[(ii)]	From the proof of Theorem \ref{thm: ssc_class_2} given in Appendix G, we know any scheduling algorithm in class $\Pi_2$ satisfies the condition C.1 and C.2 with parameters, $\kappa = 2K'/\nu_{\min}$,
	  $\eta = \nu_{\min}/4$ and $D = n a_{\max}$, where $K' = \norm{\boldsymbol \lambda}^2 + \norm{\boldsymbol \sigma}^2 +n + W_2$. By taking $W_2$ to be $O(n^{\alpha_2})$, $\kappa$ is $O(n^{\max\{2,1+\alpha_2\}})$, $\eta$ is $O(n^{-1})$ and $D$ is $O(n)$. Thus, by part (i) of Lemma \ref{lemma: moment_bound}, $\mathbb{E} \Big [\norm{\Bar{\q}_{\perp \mathcal{K}}}^r \Big]$ is $O(n^{r(1+\max\{2,\alpha_2\}})$. Then by Lemma \ref{thm: beta_result}, we need $\beta > 2 + \max\{2,\alpha_2\}$.  
	  
	  For bursty MaxWeight and pipelined MaxWeight, we know that $W_2 = 2mna_{\max}$. Suppose $m = O(n^\gamma)$. Then, for bursty MaxWeight and pipelined MaxWeight, $W_2$ is $O(n^{1+\gamma})$ and so we need $\beta > 3 + \max\{1,\gamma\}$.
	  
	  \color{black}
\end{itemize}

\end{proof}

\subsection{Appendix L}
\label{appendix: large_scale}

\begin{proof}[Proof of Theorem \ref{thm: beta_result}]

	From \cite[Theorem 2]{doi:10.1287/15-SSY193}, we know that 
	\begin{align*}
		-B_1(\epsilon,n) \leq \ex \Big[ \sum_{ij}\bar q_{ij} \Big] - \frac{1}{\epsilon} \Big( n-\frac{3}{2} + \frac{1}{2n} \Big) \leq B_2(\epsilon,n),
	\end{align*}
	where 
	\begin{align*}
		B_1(\epsilon,n) &= \Big( 1- \frac{\epsilon}{2}\Big)\Big( n-2 + \frac{1}{n} \Big) +n - \frac{1}{2} \\
		&\quad + 3n^{(2-\frac{1}{r})} \epsilon^{-\frac{1}{r}}C_r^{\frac{1}{r}}\\
		B_2(\epsilon,n) &= - \Big( 1- \frac{\epsilon}{2}\Big)\Big( n-2 + \frac{1}{n} \Big) +\frac{n+1}{2} \\
		&\quad + 2n^{(2-\frac{1}{r})} \epsilon^{-\frac{1}{r}}C_r^{\frac{1}{r}}.
	\end{align*} 
	By using that $\epsilon(n) = \Theta(n^{-\beta})$, we have that 
	\begin{equation*}
		\frac{1}{\epsilon(n)} \Big( n-\frac{3}{2} + \frac{1}{2n} \Big) = \Theta(n^{1+\beta})
	\end{equation*}
	Now, if $C_r = O(n^{r\alpha})$ for $\alpha>0$, we get that
	\begin{align*}
		n^{(2-\frac{1}{r})} \epsilon^{-\frac{1}{r}}C_r^{\frac{1}{r}} = O(n^{2+\alpha + \frac{\beta - 1}{r}}).
	\end{align*}
	This gives us that, for any $r \geq 1$, we have that, 
	\begin{align*}
		B_1(\epsilon(n),n) &= O(n^{2+\alpha + \frac{\beta - 1}{r}})\\
		B_2(\epsilon(n),n) &= O(n^{2+\alpha + \frac{\beta - 1}{r}})
	\end{align*}
	Then, $\ex \Big[ \sum_{ij}\bar q_{ij} \Big] $ is $\Theta(n^{1+\beta})$ if for some $r \geq 1$,
	\begin{equation*}
		2+\alpha + \frac{\beta - 1}{r} < 1+\beta
	\end{equation*}
	Note that the above equation is satisfied for $r$ large enough if $\beta > 1+ \alpha$.
\end{proof}

\end{techreport}

\end{document}